\newlist{steps}{enumerate}{1}
\setlist[steps, 1]{label = Step \arabic*:}
\newcommand{\gd}{\Delta}
\newcommand{\tg}{\Tilde{g}}
\newcommand{\inpt}[1]{\langle #1 \rangle}
\newcommand{\gw}{\Omega}
\newcommand{\ap}{\alpha}
\newcommand{\gb}{\beta}
\newcommand{\ms}{\mathscr}
\newcommand{\nb}{\nabla}
\newcommand{\vp}{\varphi}
\newcommand{\ve}{\varepsilon}
\newcommand{\pdr}{\partial}
\newcommand{\tup}{\textup}
\newcommand{\csg}{\{ S(t)\}_{t\geq0}}
\newcommand{\beq}{\begin{equation}}
\newcommand{\eeq}{\end{equation}}
\newcommand{\bea}{\begin{align}}
\newcommand{\eea}{\end{align}}
\newcommand{\bthm}{\begin{theorem}}
\newcommand{\ethm}{\end{theorem}}
\newcommand{\bpr}{\begin{proof}}
\newcommand{\epr}{\end{proof}}
\newcommand{\bcl}{\begin{corollary}}
\newcommand{\ecl}{\end{corollary}}
\newcommand{\bpn}{\begin{proposition}}
\newcommand{\epn}{\end{proposition}}
\newcommand{\bre}{\begin{remark}}
\newcommand{\ere}{\end{remark}}
\newcommand{\bdf}{\begin{definition}}
\newcommand{\edf}{\end{definition}}
\newcommand{\bss}{\begin{align*}}
\newcommand{\ess}{\end{align*}}
\newcommand{\bl}{\label}
\newtheorem{theorem}{Theorem}[section]
\newtheorem{corollary}[theorem]{Corollary}
\newtheorem{lemma}[theorem]{Lemma}
\newtheorem{proposition}[theorem]{Proposition}
\theoremstyle{definition}
\newtheorem{definition}[theorem]{Definition}
\theoremstyle{remark}
\newtheorem{remark}{Remark}
\numberwithin{equation}{section}
\begin{document}

\title[Hindmarsh-Rose Equations]{Global Attractors for Hindmarsh-Rose Equations in Neurodynamics}

\author[C. Phan]{Chi Phan} 
\address{Department of Mathematics and Statistics, University of South Florida, Tampa, FL 33620, USA}
\email{chi@mail.usf.edu}

\author[Y. You]{Yuncheng You} 
\address{Department of Mathematics and Statistics, University of South Florida, Tampa, FL 33620, USA}
\email{you@mail.usf.edu}

\author[J. Su]{Jianzhong Su}
\address{Department of Mathematics, University of Texas at Arlington, Arlington, TX 76019, USA}
\email{Su@uta.edu}

\thanks{}



\subjclass[2000]{Primary: 35B41, 35G25, 35K55, 37L30, 92C20; Secondary: 35Q80, 37N25, 60H15}

 \date{July 30, 2019}


\keywords{Hindmarsh-Rose equations, neuronal dynamics, global attractor, absorbing property, asymptotic compactness, attractor regularity, partly diffusive system}

\begin{abstract}
Global dynamics of the diffusive and partly diffusive Hindmarsh-Rose equations on a three-dimensional bounded domain originated in neurodynamics are investigated in this paper. The existence of global attractors as well as the regularity are proved through various uniform estimates showing the dissipative properties and the asymptotically compact characteristics, especially for the partly diffusive Hindmarsh-Rose equations by means of the Kolmogorov-Riesz theorem.
\end{abstract}

\maketitle

\section{\textbf{Introduction}}

The Hindmarsh-Rose equations for neuronal spiking-bursting of the intracellular membrane potential observed in experiments was originally proposed in \cite{HR1, HR2}. This mathematical model composed of three coupled nonlinear ordinary differential equations has been studied through numerical simulations and mathematical analysis in recent years, cf. \cite{HR1, HR2, IG, MFL, SPH, Su} and the references therein. It exhibits rich and interesting spatial-temporal bursting patterns, especially chaotic bursting and dynamics, 
as well as complex bifurcations. 

In this work we shall study the global dynamics in terms of the existence of a global attractor for the diffusive Hindmarsh-Rose equations, which is a new PDE model in neurodynamics:
\begin{align}
    \frac{\pdr u}{\pdr t} & = d_1 \gd u +  \vp (u) + v - w + J, \, \bl{ueq} \\
    \frac{\pdr v}{\pdr t} & = d_2 \gd v + \psi (u) - v, \bl{veq} \\
    \frac{\pdr w}{\pdr t} & = d_3 \gd w + q (u - c) - rw, \bl{weq}
\end{align}
for $t > 0,\; x \in \gw \subset \mathbb{R}^{n}$ ($n \leq 3$), where $\gw$ is a bounded domain with locally Lipschitz continuous boundary. The nonlinear terms 
\beq \bl{pp}
	\vp (u) = au^2 - bu^3, \quad \text{and} \quad \psi (u) = \alpha - \beta u^2.
\eeq 
The inject current $J $ is treated as a constant, but it can be a given function $J(x) \in L^2(\gw)$ and all the results in this paper remain valid. 

In this system \eqref{ueq}-\eqref{weq}, the variable $u(t,x)$ refers to the membrane electric potential of a neuronal cell, the variable $v(t, x)$ represents the transport rate of the ions of sodium and potassium through the fast ion channels and is called the spiking variable, while the variables $w(t, x)$ represents the transport rate across the neuronal cell membrane through slow channels of calcium and other ions correlated to the bursting phenomenon and is called the bursting variable. 

All the involved parameters are positive constants except $c \,(= u_R) \in \mathbb{R}$, which is a reference value of the membrane potential of a neuron cell. In the original model of ODE \cite{Su}, a set of the typical parameters are
\begin{gather*}
	J = 3.281, \;\; r = 0.0021, \;\; S = 4.0, \; \; q = rS,  \;\; c = -1.6,  \\[3pt]
	 \vp (s) = 3.0 s^2 - s^3, \;\; \psi (s) = 1.0 - 5.0 s^2.
\end{gather*}
We impose the Neumann boundary conditions for the three components,
\begin{equation} \label{nbc}
    \frac{\pdr u}{\pdr \nu} (t, x) = 0, \; \; \frac{\pdr v}{\pdr \nu} (t, x)= 0, \; \; \frac{\pdr w}{\pdr \nu} (t, x)= 0,\quad  t > 0,  \; x \in \partial \gw ,
\end{equation}
and the initial conditions 
\begin{equation} \bl{inc}
    u(0, x) = u_0 (x), \; v(0, x) = v_0 (x), \; w(0, x) = w_0 (x), \quad x \in \gw.
\end{equation}

We shall also consider the partly diffusive Hindmarsh-Rose equations
\begin{equation} \label{pHR}
	\begin{split}
    \frac{\pdr u}{\pdr t} & = d_1 \gd u +  \vp (u) + v - w + J,  \\
    \frac{\pdr v}{\pdr t} & =  \psi (u) - v,  \\
    \frac{\pdr w}{\pdr t} & =  q (u - c) - rw
    \end{split}
\end{equation}
and
\begin{equation} \label{qHR}
	\begin{split}
    \frac{\pdr u}{\pdr t} & = d_1 \gd u +  \vp (u) + v - w + J,  \\
    \frac{\pdr v}{\pdr t} & = d_2 \gd v + \psi (u) - v,  \\
    \frac{\pdr w}{\pdr t} & =  q (u - c) - rw.
    \end{split}
\end{equation}
In neuronal dynamics, the partly diffusive models \eqref{pHR} or \eqref{qHR} is more commonly interesting, since the ions currents may or may not diffuse. 

\subsection{\textbf{The Hindmarsh-Rose Model in ODE}}

In 1982-1984, J.L. Hindmarsh and R.M. Rose developed the mathematical model to describe neuronal dynamics:
\begin{equation} \label{HR}
	\begin{split}
    \frac{du}{dt} & = au^2 - bu^3 + v - w + J,  \\
    \frac{dv}{dt} & = \alpha - \beta u^2  - v,  \\
    \frac{dw}{dt} & =  q (u - u_R) - rw.
    \end{split}
\end{equation}

This neuron model was motivated by the discovery of neuronal cells in the pond snail \emph{Lymnaea} which generated a burst after being depolarized by a short current pulse. This model characterizes the phenomena of synaptic bursting and especially chaotic bursting in a three-dimensional $(u, v, w)$ space, which incorporates a third variable representing a slow current that hyperpolarizes the neuronal cell. 

Neuronal signals are short electrical pulses called spike or action potential. 
Neurons often exhibit bursts of alternating phases of rapid firing spikes and then quiescence. Bursting constitutes a mechanism to modulate and set the pace for brain functionalities and to communicate signals with the neighbor neurons. Bursting behaviors and patterns occur in a variety of bio-systems such as pituitary melanotropic gland, thalamic neurons, respiratory pacemaker neurons, and insulin-secreting pancreatic $\beta$-cells, cf. \cite{BRS, CK,CS, HR2}.

The mathematical analysis mainly using bifurcations together with numerical simulations of several models in ODEs on bursting behavior has been studied by many authors, cf. \cite{BB, DL, ET, MFL, Ri, SPH, Tr, WS, Su}. The more interesting study is on the behavior of neurons coupling and synchronization \cite{DFL, ET, Rv, SR}. 

Neurons communicate and coordinate actions through synapses or diffusive coupling called gap junction in neuroscience. Synaptic coupling of neurons has to reach certain threshold for release of quantal vesicles and synchronization \cite{DJ, Ru, SC}. 

The chaotic coupling exhibited in the simulations and analysis of this Hindmarsh-Rose model in ordinary differential equations shows more rapid synchronization and more effective regularization of neurons due to \emph{lower threshold} than the synaptic coupling \cite{Tr, Su}. It was rigorously proved in \cite{SPH, Su} that chaotic bursting solutions can be quickly synchronized and regularized when the coupling strength is large enough to topologically change the bifurcation diagram based on this Hindmarsh-Rose model, but the dynamics of chaotic bursting is highly complex.

It is known that Hodgkin-Huxley equations \cite{HH} (1952) provided a four-dimensional model for the dynamics of membrane potential taking into account of the sodium, potassium as well as leak ions current. It is a highly nonlinear system if without simplification assumptions. FitzHugh-Nagumo equations \cite{FH} (1961-1962) derived a two-dimensional model for an excitable neuron with the membrane potential and the current variable. This model admits an exquisite phase plane analysis showing spikes excited by supra-threshold input pulses and sustained periodic spiking with refractory period, but due to the 2D nature FitzHugh-Nagumo equations exclude any chaotic solutions and chaotic dynamics so that no chaotic bursting can be generated. 


It has been indicated by research 
that the Hindmarsh-Rose model in ODE causes lower the neuron firing threshold. 
More importantly, this Hindmarsh-Rose model allows varying interspike-interval. Therefore, this 3D model is a suitable choice for the investigation of both the regular bursting and the chaotic bursting when the parameters vary. The study of dynamical properties of the Hindmarsh-Rose equations \eqref{HR} as a neuron model exposes to a wide range of applications in neuroscience. 
 
The rest of Section 1 is the formulation of the system \eqref{ueq}-\eqref{weq} and provides basic concepts and results in the theory of global dynamics. In Section 2 we shall conduct uniform estimates to show the absorbing properties of the Hindmarsh-Rose semiflow in $L^{2p}$ spaces for $1 \leq p \leq 3$. In Section 3, the main result on the existence of global attractor for the diffusive Hindmarsh-Rose system is proved. Section 4 will show the regularity and structure of the global attractor. Finally, in Section 5 we shall prove the asymptotic compactness of the two partly diffusive systems \eqref{pHR} and \eqref{qHR} by means of the Kolmogorov-Riesz theorem and the existence of global attractors. 

\subsection{\textbf{Formulation and Preliminaries}}

Neuron is a specialized biological cell in the brain and the central nervous system. In general, neurons have four parts: the central cell body containing the nucleus and intracellular organelles, the dendrites, the axon, and the terminals. The dendrites are the short branches near the nucleus receiving incoming signals of voltage pulse. The axon is a long branch to propagate outgoing signals. The nerve terminals communicate these pulse signals to other neurons. 

Neurons are immersed in aqueous chemical solutions consisting of different diffusive ions electrically charged. 
The voltage and concentration-dependent conductances of these various ions can give rise to different neural behavior. As pointed out in \cite{EI}, neuron is a distributed dynamical system. 

From physical and mathematical point of view, it is meaningful and useful to consider the Hindmarsh-Rose model in partial differential equations with the spatial variables $x$ involved. 
Here in the abstract extent, we shall study the diffusive Hindmarsh-Rose equations \eqref{ueq}-\eqref{weq} and the partly diffusive models \eqref{pHR} and \eqref{qHR} in a bounded domain of space $\mathbb{R}^3$ and focus on the global dynamics of the solutions.

We start with formulation of the aforementioned initial-boundary value problem of \eqref{ueq}--\eqref{inc} into an abstract evolutionary equation. Define the Hilbert space $H = [L^2 (\gw)]^3 = L^2 (\gw, \mathbb{R}^3)$ and the Sobolev space $E =  [H^{1}(\gw)]^3 = H^1 (\gw, \mathbb{R}^3)$. The norm and inner-product of $H$ or $L^2 (\gw)$ will be denoted by $\| \, \cdot \, \|$ and $\inpt{\,\cdot , \cdot\,}$, respectively. The norm of $E$ will be denoted by $\| \, \cdot \, \|_E$. The norm of $L^p (\gw)$ or $L^p (\gw, \mathbb{R}^3)$ will be dented by $\| \cdot \|_{L^p}$ if $p \neq 2$. We use $| \, \cdot \, |$ to denote a vector norm in a Euclidean space.

The initial-boundary value problem \eqref{ueq}--\eqref{inc} is formulated as an initial value problem of the evolutionary equation:
\begin{equation} \label{pb}
 	\begin{split}
   	& \frac{\partial g}{\partial t} = A g + f(g), \quad t > 0, \\
    	g &\, (0) = g_0 = (u_0, v_0, w_0) \in H.
	\end{split}
\end{equation}
Here the nonnegative self-adjoint operator
\begin{equation} \label{opA}
        A =
        \begin{pmatrix}
            d_1 \gd  & 0   & 0 \\[3pt]
            0 & d_2 \gd  & 0 \\[3pt]
            0 & 0 & d_3 \gd
        \end{pmatrix}
        : D(A) \rightarrow H,
\end{equation}
where $D(A) = \{g \in H^2(\gw, \mathbb{R}^3): \pdr g /\pdr \nu = 0 \}$ is the generator of an analytic $C_0$-semigroup $\{e^{At}\}_{t \geq 0}$ on the Hilbert space $H$ due to the Lumer-Phillips theorem  \cite{SY}. By the fact that $H^{1}(\gw) \hookrightarrow L^6(\gw)$ is a continuous imbedding for space dimension $n \leq 3$ and by the H\"{o}lder inequality, there is a constant $C_0 > 0$ such that 
$$
    \| \vp (u)  \| \leq C_0 \| u \|_{L^6}^3 \quad \tup{and} \quad \|\psi (u) \| \leq C_0 \| u \|_{L^4}^2 \quad \textup{for} \; u \in L^6 (\gw).
$$
Therefore, the nonlinear mapping 
\begin{equation} \label{opf}
    f(u,v, w) =
        \begin{pmatrix}
             \vp (u) + v - w + J \\[4pt]
            \psi (u) - v,  \\[4pt]
	     q (u - c) - rw
        \end{pmatrix}
        : E \longrightarrow H
\end{equation}
is a locally Lipschitz continuous mapping. We can simply write column vectors $g(t)$ as $(u(t, \cdot), v(t, \cdot ), w(t, \cdot))$ and write $g_0 = (u_0, v_0, w_0)$. Consider the weak solution of this initial value problem \eqref{pb}, defined below \cite{CV}: 
\begin{definition} \label{D:wksn}
	A function $g(t, x), (t, x) \in [0, \tau] \times \gw$, is called a \emph{weak solution} to the initial value problem \eqref{pb}, if the following conditions are satisfied:
	
	\textup{(i)} $\frac{d}{dt} (g, \zeta) = (Ag, \zeta) + (f(g), \zeta)$ is satisfied for a.e. $t \in [0, \tau]$ and for any $\zeta \in E$;
	
	\textup{(ii)} $g(t, \cdot) \in L^2 (0, \tau; E) \cap C_w ([0, \tau]; H)$ such that $g(0) = g_0$.
	
\noindent
Here $(\cdot , \cdot)$ stands for the dual product of $E^*$ and $E$, and $C_w$ stands for the weakly continuous functions valued in $H$. \end{definition}

\begin{lemma} \label{Lwn}
	For any given initial data $g_0 \in H$, there exists a unique local weak solution $g(t, g_0) = (u(t), v(t), w(t)), \, t \in [0, \tau]$, for some $\tau > 0$, of the initial value problem \eqref{pb}, which satisfies
\begin{equation} \label{soln}
    	g \in C([0, T_{max}); H) \cap C^1 ((0, T_{max}); H) \cap L_{loc}^2 ([0, T_{max}); E),
\end{equation}
where $I_{max} = [0, T_{max})$ is the maximal interval of existence. Moreover, any weak solution $g(t, g_0)$ becomes a strong solution for $t > 0$ and has the regularity
\beq \bl{stsl}
	g \in C([t_0, T_{max}); E) \cap C^1 ((t_0, T_{max}); H) \cap L_{loc}^2 ([t_0, T_{max}); H^2 (\gw, \mathbb{R}^3))
\eeq
for any\, $t_0 \in (0, T_{max})$.
\end{lemma}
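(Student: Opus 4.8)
The plan is to combine a Faedo--Galerkin construction, which produces a local weak solution in the sense of Definition~\ref{D:wksn} from data $g_0 \in H$, with the smoothing of the analytic semigroup $e^{At}$, which upgrades that solution to a strong solution for $t > 0$, and with an energy estimate on differences, which gives uniqueness. Throughout I will use what has already been recorded before Definition~\ref{D:wksn}: that $n \leq 3$ makes the imbeddings $H^1(\gw) \hookrightarrow L^6(\gw)$ and $H^1(\gw) \hookrightarrow L^4(\gw)$ continuous, that the operator $A$ in \eqref{opA} is self-adjoint and generates the analytic $C_0$-semigroup $\{e^{At}\}_{t \geq 0}$ on $H$, and that $f$ in \eqref{opf} is locally Lipschitz from $E$ into $H$, with Lipschitz constant on a ball $\{\|g\|_E \leq \rho\}$ governed by the quadratic and cubic growth of $\psi$ and $\vp$ in \eqref{pp}.

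First I would run the Galerkin scheme in the orthonormal basis of $H$ given by the eigenfunctions of $A$ (the componentwise Neumann Laplacian): the finite-dimensional projections $g_m$ solve locally solvable ODE systems, and the basic a priori bound comes from pairing the equation with $g_m$. Here the dissipative sign of the cubic term is decisive: by \eqref{ueq} and \eqref{pp} the quantity $\int_\gw (a u_m^3 - b u_m^4)\,dx$ contains the negative quartic term $-b\|u_m\|_{L^4}^4$, which via Young's inequality absorbs $a\|u_m\|_{L^3}^3$ and, jointly with the diffusion term, also absorbs the cross term $-\gb\int_\gw u_m^2 v_m\,dx$ coming from $\psi$ in \eqref{veq}, while the remaining couplings $v - w + J$ in \eqref{ueq} and $q(u - c) - rw$ in \eqref{weq} are dominated by Cauchy--Schwarz and Young. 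This is precisely the $L^2$ instance of the uniform estimates carried out in Section~2, and on an interval $[0,\tau]$ determined by $\|g_0\|_H$ it yields a bound for $g_m$ in $L^\infty(0,\tau; H) \cap L^2(0,\tau; E)$, together with a bound on $\pdr_t g_m$ in some $L^q(0,\tau; E^*)$ with $q > 1$, all uniform in $m$. The Aubin--Lions lemma then gives a subsequence converging to a limit $g$, strongly in $L^2(0,\tau; H)$, and the growth bounds on $\vp, \psi$ let one pass to the limit in the nonlinear terms; one checks $g \in C_w([0,\tau]; H)$ and then, from the equation, $g \in C([0,\tau]; H)$, so $g$ is a weak solution. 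The usual continuation argument extends $g$ to a maximal interval $I_{max} = [0, T_{max})$.

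Next I would establish uniqueness and the instantaneous gain of regularity. For two weak solutions $g_1, g_2$, the difference $\bar g = g_1 - g_2 = (\bar u, \bar v, \bar w)$ solves the difference equation; pairing with $\bar g$ and using the monotonicity of $s \mapsto -b s^3$, the term $-b\int_\gw (u_1^3 - u_2^3)\bar u\,dx \leq 0$ is discarded, while the remaining quadratic contributions (from $a u^2$ and $\gb u^2$) are estimated, via the interpolation $\|\,\cdot\,\|_{L^3} \leq C\|\,\cdot\,\|^{1/2}\|\,\cdot\,\|_{H^1}^{1/2}$ valid for $n \leq 3$, by $\varepsilon(\|\nb\bar u\|^2 + \|\nb\bar v\|^2) + C(t)\|\bar g\|^2$ with $C(\,\cdot\,) \in L^1(0,\tau)$ because $\|g_i(\,\cdot\,)\|_E^2 \in L^1(0,\tau)$; Gronwall's inequality then forces $\bar g \equiv 0$. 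For the regularity, fix $t_0 \in (0, T_{max})$ and choose $t_1 \in (0, t_0)$ with $g(t_1) \in E$, possible for a.e.\ $t_1$ since $g \in L^2_{loc}$ into $E$. With datum $g(t_1) \in E$ the evolution equation \eqref{pb} is solved locally by the contraction mapping principle applied to $(\mt h)(t) = e^{A(t - t_1)} g(t_1) + \int_{t_1}^{t} e^{A(t - s)} f(h(s))\,ds$ in $C([t_1, t_1 + \ggd]; E)$ for $\ggd$ small; this fixed point closes because $\|e^{A(t - t_1)} g(t_1)\|_E$ stays bounded as $t \downarrow t_1$ and the singularity $(t - s)^{-1/2}$ of $\|e^{A(t-s)}\|_{\mathcal{L}(H, E)}$ is integrable. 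The analytic-semigroup regularity theory \cite{SY} then yields $g(t) \in D(A) = H^2(\gw, \mathbb{R}^3)$ for $t \in (t_1, t_1 + \ggd]$, $g \in C^1((t_1, t_1 + \ggd]; H)$ and $g \in L^2((t_1, t_1 + \ggd]; H^2(\gw, \mathbb{R}^3))$, and by the uniqueness just proved this strong solution coincides with $g$. Covering $[t_0, T_{max})$ and iterating gives \eqref{stsl} on $[t_0, T_{max})$ for every $t_0 \in (0, T_{max})$, hence also \eqref{soln}.

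I expect the main obstacle to lie in the a priori estimates of the Galerkin step and in the uniqueness estimate: both must control the cubic term $b u^3$ and the quadratic coupling $\gb u^2 v$ in space dimension up to three, and although the dissipative sign of $-b u^3$ together with the continuous imbeddings $H^1 \hookrightarrow L^6$ and $H^1 \hookrightarrow L^4$ for $n \leq 3$ make this feasible, matching the exponents and absorbing every term into the dissipative ones is delicate; by contrast the Galerkin passage to the limit, the continuation to $T_{max}$, and the semigroup bootstrap are standard for semilinear parabolic systems.
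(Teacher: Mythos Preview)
Your proposal is correct and follows the same strategy the paper indicates: Galerkin approximations with the Section~2 type a priori estimates, a Lions--Magenes/Aubin--Lions compactness passage to the limit, and parabolic regularity of the analytic semigroup for the upgrade to a strong solution. The paper's own proof is only a two-sentence sketch that points to exactly these ingredients and explicitly omits the details, so your write-up is in effect a fleshed-out version of what the authors had in mind; your added uniqueness argument via the monotonicity of $-bs^3$ and Gagliardo--Nirenberg interpolation for the quadratic couplings is the natural way to fill that gap.
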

\begin{proof}
The proof of the existence and uniqueness of a weak solution is made by conducting \emph{a priori} estimates on the Galerkin approximate solutions of the initial value problem \eqref{pb}, these estimates are similar to what we shall present in Section 2, and then by the Lions-Magenes type of weak and weak$^*$ compactness and convergence argument \cite{CV, SY}. The statement of strong solution follows from the parabolic regularity of the evolutionary equations \cite{SY, Tm}. The details are omitted here.
\end{proof}

We do not assume the initial data $u_0, v_0, w_0$, nor the solutions $u(t, x)$, $v(t, x), w(t, x)$, are nonnegative functions. We do not impose any conditions on any of the positive parameters, nor on the parameter $c \in \mathbb{R}$ of the equation \eqref{weq}, in the proof of all the results in this paper.

Here the goal is to prove the existence and regularity of a global attractor, which will characterize qualitatively the longtime, asymptotic, and global dynamics of all the solution trajectories of this PDE system \eqref{pb} and the hybrid PDE-ODE systems \eqref{pHR} and \eqref{qHR}. As specified in \cite{CV, Rb, SY, Tm} as well as in \cite{Y08, Y10}, global attractor is a depository (usually fractal finite-dimensional) of all the permanent regimes including all steady states, periodic or knotted or chaotic orbits, and unstable manifolds for an infinite-dimensional dynamical system. These topics of global dynamic patterns are also important in neural field and neural network theories \cite{Co, ET}. For the autocatalytic and Boissonade reaction-diffusion systems \cite{Tu, Y08, Y10, Y12}, it is proved that global attractors exist. 

We refer to \cite{CV, Rb, SY, Tm} for the concepts and basic facts in the theory of infinite dimensional dynamical systems, including the few listed here for clarity.

\begin{definition} \label{Dabsb}
Let $\{S(t)\}_{t \geq 0}$ be a semiflow on a Banach space $\ms{X}$. A bounded set $B_0$ of $\ms{X}$ is called an absorbing set for this semiflow, if for any given bounded subset $B \subset \ms{X}$ there is a finite time $T_0 \geq 0$ depending on $B$, such that $S(t)B \subset B_0$ for all $t \geq T_0$.
\end{definition}

\begin{definition} \label{Dasmp}
A semiflow $\{S(t)\}_{t \geq 0}$ on a Banach space $\ms{X}$ is called asymptotically compact if for any bounded sequence $\{w_n \}$ in $\ms{X}$ and any monotone increasing sequences $0 < t_n \to \infty$, there exist subsequences $\{w_{n_k}\}$ of $\{w_n \}$ and $\{t_{n_k}\}$ of $\{t_n\}$ such that $\lim_{k \to \infty}
S(t_{n_k}) w_{n_k}$ exists in $\ms{X}$.
\end{definition}

\begin{definition}[Global Attractor] \label{Dgla}
A set $\mathscr{A}$ in a Banach space $\ms{X}$ is called a global attractor for a semiflow $\csg$ on $\ms{X}$, if the following two properties are satisfied:

(i) $\mathscr{A}$ is a nonempty, compact, and invariant set in the space $\ms{X}$. 

(ii) $\mathscr{A}$ attracts any given bounded set $B \subset \ms{X}$ in the sense 
$$
	\text{dist}_{\ms{X}} (S(t)B, \mathscr{A}) = \sup_{x \in B} \inf_{y \in \mathscr{A}} \| S(t)x - y \|_{\ms{X}} \to 0, \;\;  \text{as} \; \; t \to \infty.
$$
\end{definition}

The following is the main existing result on the existence of a global attractor. 
\begin{proposition} \cite{CV, Rb, SY, Tm} \label{L:basic}
Let $\{S(t)\}_{t\geq 0}$ be a semiflow on a Banach space $\ms{X}$. If the following two conditions are satisfied\textup{:}

\textup{(i)} there exists a bounded absorbing set $B_0 \subset \ms{X}$ for $\{S(t)\}_{t\geq 0}$, and

\textup{(ii)} the semiflow $\{S(t)\}_{t\geq 0}$ is asymptotically compact on $\ms{X}$,

\noindent
then there exists a global attractor $\ms{A}$ in $\ms{X}$ for the semiflow $\{S(t)\}_{t\geq 0}$ and the global attractor is given by
\beq \bl{glafm}
        \ms{A} = \bigcap_{\tau \geq 0} \; \overline{\bigcup_{t \geq \tau} (S(t)B_0)}.
\eeq
\end{proposition}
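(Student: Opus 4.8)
The plan is to verify that the set $\ms{A}$ defined by the right-hand side of \eqref{glafm}, which is the $\omega$-limit set of the absorbing set $B_0$, satisfies the two defining properties in Definition \ref{Dgla}. The first step is to record the sequential characterization of this set: a point $y \in \ms{X}$ belongs to $\ms{A}$ if and only if there exist a sequence $x_n \in B_0$ and times $t_n \to \infty$ with $S(t_n)x_n \to y$ in $\ms{X}$. The ``if'' direction is immediate from the definition of closure; the ``only if'' direction follows by a diagonal extraction across the nested family $\overline{\bigcup_{t \geq \tau}(S(t)B_0)}$, $\tau \geq 0$.

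Next I would show that $\ms{A}$ is nonempty and compact. Nonemptiness: pick any $x_0 \in B_0$ and any $t_n \to \infty$; by asymptotic compactness (Definition \ref{Dasmp}) a subsequence of $\{S(t_n)x_0\}$ converges, and its limit lies in $\ms{A}$ by the characterization. Closedness of $\ms{A}$ is automatic, being an intersection of closed sets. For precompactness, given any sequence $\{y_m\} \subset \ms{A}$, use the characterization to pick $t_m \geq m$ and $x_m \in B_0$ with $\|S(t_m)x_m - y_m\| < 1/m$; asymptotic compactness applied to $\{S(t_m)x_m\}$ furnishes a convergent subsequence, so $\{y_m\}$ has one too, with limit in the closed set $\ms{A}$. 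Hence $\ms{A}$ is compact.

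Then I would prove the attraction property (ii). First, $\ms{A}$ attracts $B_0$: if not, there would exist $\ve > 0$, $t_n \to \infty$, and $x_n \in B_0$ with $\dist_{\ms{X}}(S(t_n)x_n, \ms{A}) \geq \ve$; but asymptotic compactness gives a subsequence $S(t_{n_k})x_{n_k} \to y$, which lies in $\ms{A}$, a contradiction. For an arbitrary bounded set $B$, the absorbing property (Definition \ref{Dabsb}) provides $T_0$ with $S(T_0)B \subset B_0$, so for $t \geq T_0$ one has $\dist_{\ms{X}}(S(t)B, \ms{A}) = \dist_{\ms{X}}(S(t - T_0)S(T_0)B, \ms{A}) \leq \dist_{\ms{X}}(S(t - T_0)B_0, \ms{A}) \to 0$.

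Finally, invariance $S(t)\ms{A} = \ms{A}$. The inclusion $S(t)\ms{A} \subseteq \ms{A}$ follows from continuity of $S(t)$: if $S(t_n)x_n \to y$ then $S(t + t_n)x_n = S(t)S(t_n)x_n \to S(t)y$ with $t + t_n \to \infty$. For $\ms{A} \subseteq S(t)\ms{A}$, given $y \in \ms{A}$ with $S(t_n)x_n \to y$ and $t_n > t$, apply asymptotic compactness to $\{S(t_n - t)x_n\}$ to extract a subsequence converging to some $z$, necessarily in $\ms{A}$; then $S(t)z = \lim_k S(t)S(t_{n_k} - t)x_{n_k} = \lim_k S(t_{n_k})x_{n_k} = y$, so $y \in S(t)\ms{A}$. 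The main obstacle is that asymptotic compactness, rather than genuine compactness of orbits, must be used at each juncture --- the precompactness of $\ms{A}$ and the surjectivity half of the invariance --- and one must arrange the diagonal extractions so that the associated time sequences still diverge to infinity.
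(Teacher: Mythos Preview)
Your proof is correct and follows the standard textbook argument. Note, however, that the paper does not supply its own proof of this proposition: it is stated with citations to \cite{CV, Rb, SY, Tm} and used as a black-box tool, so there is nothing in the paper to compare against beyond observing that your argument is precisely the one found in those references.
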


\begin{definition} \label{Dxya}
Let $\{S(t)\}_{t \geq 0}$ be a semiflow on a Banach space $X$ and let $Y$ be a Banach space which is compactly embedded in $X$. Then a set $\mathscr{A} \subset Y$ is called an $(X, Y)$-global attractor for this semiflow if the following two conditions are satisfied:

(i) $\mathscr{A}$ is a  nonempty, compact, and invariant set in $Y$, and

(ii) $\mathscr{A}$ attracts any bounded set $B$ of $X$ with respect to the $Y$-norm.
\end{definition}

The Gagliardo-Nirenberg inequalities \cite[Appendtx B]{SY} of interpolation is useful in estimates of solutions of partial differential equations:
\beq \label{GN}
	\| y \|_{W^{k, p} (\gw)} \leq C \| y \|^\theta_{W^{m, q} (\gw)}\,  \| y \|^{1 - \theta}_{L^r (\gw)}, \quad \text {for all} \;\; y \in W^{m, q} (\gw),
\eeq
where $C > 0$ is a constant, provided that $p, q, r \geq 1, 0 < \theta < 1$, and
$$
	k - \frac{n}{p}\,  \leq \, \theta \left(m - \frac{n}{q}\right)  - (1 - \theta)\, \frac{n}{r},  \quad n = \text{dim}\, (\gw).
$$
The Young's inequality in the general form for any nonnegative $x, y$ is
\beq \bl{Hld}
	xy  \leq \ve x^p + C(\ve, p) y^q, \qquad \frac{1}{p} + \frac{1}{q} = 1, \, (p, q \geq 1),  \qquad C(\ve, p) = \ve^{-q/p}.
\eeq
with $\ve > 0$ which can be arbitrarily small.

In the sequel, we often write $u(t, x), v(t, x), w(t, x)$ as $u(t), v(t), w(t)$ or even as $u, v, w$ for brevity. We shall use $C$ to denote a generic constant whose value depends on the context. Otherwise, it will be marked as $C(\ve, D)$ if $C$ depends on a constant $\ve$ and a given set $D$ or maybe on more quantities.

\section{\textbf{Uniform Estimates and Absorbing Properties}}

In this section we shall conduct scaled \emph{a priori} estimates to show that the weak solution of the problem \eqref{pb} exists globally in time and the solution semiflow is dissipative meaning there exists an absorbing set in $H$. We shall also prove the absorbing properties  in the spaces $L^4 (\gw, \mathbb{R}^3)$ and $L^6 (\gw, \mathbb{R}^3)$ for each trajectory, which will play a key role in Section 5 for the study of  the partly diffusive Hindmarsh-Rose equations \eqref{pHR} and \eqref{qHR}. 

\subsection{\textbf{Global Existence and Dissipative Property in $H$}}

\begin{theorem} \label{Lm2}
For any given initial data $g_0 = (u_0, v_0, w_0) \in H$, there exists a unique global weak solution in time, $g(t) = (u(t), v(t), w(t)), \, t \in [0, \infty)$, of the initial value problem \eqref{pb} for the diffusive Hindmarsh-Rose equations \eqref{ueq}-\eqref{weq}. The weak solution becomes a strong solution on the interval $(0, \infty)$. 
\end{theorem}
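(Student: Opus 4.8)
The plan is to promote the local weak solution furnished by Lemma~\ref{Lwn} to a global one by establishing an \emph{a priori} estimate showing that $\|g(t)\|$ cannot escape to infinity in finite time; combined with the standard continuation principle (if $T_{max} < \infty$ then $\limsup_{t \to T_{max}^-}\|g(t)\| = \infty$) this forces $T_{max} = \infty$. Since $g$ is already a strong solution for $t > 0$ by Lemma~\ref{Lwn}, all the computations below are legitimate on $(0, T_{max})$; to cover a neighborhood of $t = 0$ one carries out the same computation on the Galerkin approximations used in the proof of Lemma~\ref{Lwn} and passes to the limit, exactly as there.

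First I would test \eqref{ueq}, \eqref{veq}, \eqref{weq} in $H$ with $u$, $v$, $w$ respectively and add the three resulting identities. The Neumann condition \eqref{nbc} makes the diffusion contributions equal to $-d_1\|\nb u\|^2 - d_2\|\nb v\|^2 - d_3\|\nb w\|^2 \le 0$, which are simply discarded. The only terms that are not at most quadratic are $\inpt{\vp(u), u} = a\int_\gw u^3\,dx - b\int_\gw u^4\,dx$ and the genuinely cubic cross term $-\gb\int_\gw u^2 v\,dx$ coming from $\inpt{\psi(u), v}$. The key structural fact is that $b > 0$, so the quartic term carries the dissipative sign; by Young's inequality \eqref{Hld},
$$
	\left| a\int_\gw u^3\,dx \right| \le \frac{b}{4}\int_\gw u^4\,dx + C, \qquad \left| \gb\int_\gw u^2 v\,dx \right| \le \frac{b}{4}\int_\gw u^4\,dx + \frac{\gb^2}{b}\,\|v\|^2,
$$
so these cubic terms are absorbed by $-b\int_\gw u^4\,dx$ at the cost of a constant and a multiple of $\|v\|^2$. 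The remaining terms $\inpt{v - w + J, u}$, $\inpt{\ap, v} - \|v\|^2$, and $q\inpt{u - c, w} - r\|w\|^2$ are handled by Cauchy--Schwarz and Young, producing only $C(\|u\|^2 + \|v\|^2 + \|w\|^2) + C$.

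Adding everything up, one obtains a differential inequality
$$
	\frac{d}{dt}\big(\|u\|^2 + \|v\|^2 + \|w\|^2\big) \le C_1\big(\|u\|^2 + \|v\|^2 + \|w\|^2\big) + C_2,
$$
with $C_1, C_2$ depending only on the (fixed) parameters and $|\gw|$. Gronwall's inequality then gives $\|g(t)\|^2 \le (\|g_0\|^2 + C_2/C_1)\,e^{C_1 t}$, which is finite on every bounded interval; hence $T_{max} = \infty$ and the weak solution is global. Applying the smoothing assertion \eqref{stsl} of Lemma~\ref{Lwn} on an arbitrary interval $(0, T)$ then shows $g$ is a strong solution on $(0, \infty)$ with the stated regularity. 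The one delicate point is the bookkeeping of the cubic terms so that $-b\int_\gw u^4\,dx$ dominates both $a\int_\gw u^3\,dx$ and $-\gb\int_\gw u^2 v\,dx$; because $b > 0$ this always succeeds, and -- as the theorem asserts -- no condition on the other parameters (including the sign of $c$) is needed. I would expect a small refinement of this computation -- retaining part of $-b\int_\gw u^4\,dx$, using $\int_\gw u^4\,dx \ge |\gw|^{-1}\|u\|^4$, and balancing the $\|v\|^2$-terms by a suitably weighted functional $\|u\|^2 + \gl_1\|v\|^2 + \gl_2\|w\|^2$ -- to upgrade the right-hand side to $-\ggd(\|u\|^2 + \|v\|^2 + \|w\|^2) + C$, thereby also yielding the absorbing set in $H$ announced in the introduction to this section.
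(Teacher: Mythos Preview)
Your argument is correct and follows the same overall strategy as the paper: energy estimates, domination of the cubic terms by the quartic $-b\int_\gw u^4\,dx$, and Gronwall. The one difference worth noting is that the paper tests \eqref{ueq} against $C_1 u$ with the specific weight $C_1=(\gb^2+4)/b$, and bounds $-\gb u^2 v$ in the $v$-equation by $\frac12(\gb^2 u^4+v^2)$; the weight is then chosen so that $-C_1 b\,u^4$ swallows $\frac12\gb^2 u^4$ with room to spare, producing directly the dissipative inequality
\[
\frac{d}{dt}\bigl(C_1\|u\|^2+\|v\|^2+\|w\|^2\bigr)+r_1\bigl(C_1\|u\|^2+\|v\|^2+\|w\|^2\bigr)\le C|\gw|,
\]
which simultaneously gives global existence and the absorbing set in $H$. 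Your unweighted version yields only the exponential bound $\|g(t)\|^2\le(\|g_0\|^2+C_2/C_1)e^{C_1 t}$, which is enough for $T_{max}=\infty$ but not for dissipativity; you correctly anticipate that a weighted functional is the fix, and that is exactly what the paper does.
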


\begin{proof}
Taking the $L^2$ inner-product $\inpt{\eqref{ueq}, C_1 u(t)}$ with an adjustable constant $C_1 > 0$ to be determined later, we use the Young's inequality to get
\begin{equation} \label{u1}
	\begin{split}
	\frac{C_1}{2} \frac{d}{dt} \|u \|^2 + C_1 d_1 \| \nabla u\|^2 = &\, \int_\gw C_1 (\vp (u) u + uv - uw +Ju)\, dx \\
	= &\int_\gw C_1 (au^3 -bu^4 + uv - uw +Ju)\, dx.  
	\end{split}
\end{equation}
Taking the $L^2$ inner-products $\inpt{\eqref{veq}, v(t)}$ and  $\inpt{\eqref{weq}, w(t)}$, we have
\begin{equation} \label{v1}
	\begin{split}
	&\frac{1}{2} \frac{d}{dt} \|v \|^2 + d_2 \| \nabla v\|^2 = \int_\gw  (\psi (u) v - v^2)\, dx = \int_\gw (\ap v - \gb u^2 v - v^2)\, dx \\[2pt]
	\leq &\int_\gw \left(\ap v +\frac{1}{2} (\gb^2 u^4 + v^2) - v^2\right) dx = \int_\gw \left(\ap v +\frac{1}{2} \gb^2 u^4 - \frac{1}{2} v^2\right) dx \\
	\leq & \int_\gw \left(2\ap^2 + \frac{1}{8} v^2 +\frac{1}{2} \gb^2 u^4 - \frac{1}{2} v^2\right) dx = \int_\gw \left(2\ap^2 +\frac{1}{2} \gb^2 u^4 - \frac{3}{8} v^2\right) dx
	\end{split}
\end{equation}
and
\begin{equation} \label{w1}
	\begin{split}
	&\frac{1}{2} \frac{d}{dt} \|w \|^2 + d_3 \| \nabla w\|^2 = \int_\gw (q (u - c)w - rw^2)\, dx  \\
	\leq & \int_\gw \left(\frac{q^2}{2r} (u - c)^2 + \frac{1}{2} r w^2 - r w^2 \right) dx \leq \int_\gw \left(\frac{q^2}{r} (u^2 + c^2) - \frac{1}{2} r w^2 \right) dx.
	\end{split}
\end{equation}
Now we choose the positive constant in \eqref{u1} to be $C_1 = \frac{1}{b} (\gb^2 + 4)$, so that
$$
	\int_\gw (- C_1 b u^4)\, dx + \int_\gw (\gb^2 u^4)\, dx \leq \int_\gw (-4 u^4)\, dx.
$$ 
Then we estimate all the mixed product terms on the right-hand side of the above three inequalities by using the Young's inequality in an appropriate way as follows. In \eqref{u1},
\begin{gather*}
	\int_\gw C_1 au^3\, dx \leq \frac{3}{4} \int_\gw u^4\, dx + \frac{1}{4}\int_\gw (C_1 a)^4 \, dx \leq \int_\gw u^4\, dx + (C_1 a)^4 |\gw|, \\
	\int_\gw C_1 (uv - uw + Ju)\, dx \leq \int_\gw \left(2(C_1 u)^2 + \frac{1}{8} v^2 + \frac{(C_1 u)^2}{r} + \frac{1}{4} r w^2 + C_1 u^2 + C_1J^2 \right) dx,
\end{gather*}
where on the right-hand side of the second inequality we further treat the three terms involving $u^2$ as
$$
	\int_\gw \left(2(C_1 u)^2 + \frac{(C_1 u)^2}{r} + C_1 u^2 \right) dx \leq \int_\gw u^4 \, dx +\left[C_1^2 \left(2 +\frac{1}{r}\right) + C_1\right]^2 |\gw |.
$$
Then in \eqref{w1},
\begin{gather*}
	\int_\gw \frac{1}{r} q^2 u^2 \, dx \leq \int_\gw \left(\frac{u^4}{2} + \frac{q^4}{2r^2}\right) dx  \leq \int_\gw u^4\, dx + \frac{q^4}{r^2} |\gw|.
\end{gather*}
Substitute the above term estimates into \eqref{u1} and \eqref{w1} and then sum up the three inequalities \eqref{u1}-\eqref{w1} to obtain
\beq \label{g2}
	\begin{split}
	&\frac{1}{2} \frac{d}{dt} (C_1 \|u\|^2 +  \|v\|^2 +  \|w\|^2) + (C_1 d_1 \|\nb u\|^2 + d_2 \|\nb v\|^2 +d_3\|\nb w\|^2) \\
	\leq & \int_\gw C_1 (au^3 -bu^4 + uv - uw +Ju)\, dx \\
	+ &  \int_\gw \left(2\ap^2 +\frac{1}{2} \gb^2 u^4 - \frac{3}{8} v^2\right) dx + \int_\gw \left(\frac{q^2}{r} (u^2 + c^2) - \frac{1}{2} r w^2 \right) dx \\
	\leq & \int_\gw (3 - 4)u^4\, dx + \int_\gw \left(\frac{1}{8} - \frac{3}{8}\right) v^2\, dx + \int_\gw \left(\frac{1}{4} - \frac{1}{2} \right) rw^2\, dx \\
	+ & \, |\gw | \left( (C_1 a)^4 + C_1 J^2  + \left[C_1^2 \left(2 +\frac{1}{r}\right) + C_1\right]^2 + 2\ap^2 + \frac{q^2 c^2}{r} + \frac{q^4}{r^2} \right) \\
	= &\, - \int_\gw \left(u^4(t, x) + \frac{1}{4} v^2 (t, x) + \frac{1}{4} rw^2 (t, x)\right) dx + C_2 |\gw | 
	\end{split}
\eeq
where $C_2 > 0$ is the constant given by 
$$
	C_2 = (C_1 a)^4 + C_1 J^2  + \left[C_1^2 \left(2 +\frac{1}{r}\right) + C_1\right]^2 + 2\ap^2 + \frac{q^2 c^2}{r} + \frac{q^4}{r^2}.
$$
Set 
$$
	 d = 2\min \{d_1, d_2, d_3\}.
$$
Then \eqref{g2} yields the following uniform group estimate,
\beq \label{E1}
	\begin{split}
	&\frac{d}{dt} (C_1 \|u(t)\|^2 + \|v(t)\|^2 + \|w(t)\|^2)  + d (C_1 \|\nb u\|^2 + \|\nb v\|^2 + \| \nb w\|^2) \\
	+ &\, \int_\gw \left(2u^4(t, x) + \frac{1}{2} v^2 (t, x) + \frac{1}{2} rw^2 (t, x)\right) dx  \leq 2C_2 |\gw|, 
	\end{split}
\eeq
for $t \in I_{max} = [0, T_{max})$, the maximal time interval of solution existence. Since 
$$ 
	2u^4 \geq \frac{1}{2} \left(C_1 u^2 - \frac{C_1^2}{16}\right), 
$$
it follows from \eqref{E1} that
\begin{equation*}
	\begin{split}
	&\frac{d}{dt} (C_1 \|u(t)\|^2 + \|v(t)\|^2 + \|w(t)\|^2)  + d (C_1 \|\nb u\|^2 + \|\nb v\|^2 + \|\nb w\|^2) \\
	+ &\, \int_\gw \frac{1}{2} \left(C_1 u^2(t, x) + v^2 (t, x) + r w^2 (t, x)\right) dx \leq \left(2C_2 + \frac{C_1^2}{32}\right) |\gw |.
	\end{split}
\end{equation*}
Set $r_1 = \frac{1}{2} \min \{1, r\}$. Then we have 
\begin{equation} \label{E2}
	\begin{split}
	\frac{d}{dt} (C_1 \|u(t)\|^2 &+ \|v(t)\|^2 + \|w(t)\|^2)  + d (C_1 \|\nb u\|^2 + \|\nb v\|^2 + \| \nb w\|^2) \\
	+ &\, r_1 (C_1 \| u\|^2 + \| v \|^2 + \|w\|^2) \leq \left(2C_2 + \frac{C_1^2}{32}\right) |\gw |.
	\end{split}
\end{equation}
Apply the Gronwall inequality to 
$$
	\frac{d}{dt} (C_1 \|u(t)\|^2 + \|v(t)\|^2 + \|w(t)\|^2)  +  r_1 (C_1 \| u\|^2 + \| v\|^2 + \|w\|^2) \leq \left(2C_2 + \frac{C_1^2}{32}\right) |\gw |
$$
and we obtain
\beq \label{dse}
	C_1 \|u(t)\|^2 + \|v(t)\|^2 + \|w(t)\|^2 \leq e^{- r_1 t} (C_1 \|u_0\|^2 + \|v_0\|^2 + \|w_0\|^2) + M |\gw |
\eeq 
where 
$$
	M = \frac{1}{r_1}\left(2C_2 + \frac{C_1^2}{32}\right) .
$$
The estimate \eqref{dse} shows that the weak solution will never blow up at any finite time because it is uniformly bounded,
$$
	C_1 \|u(t)\|^2 + \|v(t)\|^2 + \|w(t)\|^2 \leq C_1 \|u_0\|^2 + \|v_0\|^2 + \|w_0|^2 + M |\gw |.
$$
Therefore the weak solution of the initial value problem \eqref{pb} exists globally in time for any initial data. The time interval of maximal existence is always $[0, \infty)$.
\end{proof}

The global existence and uniqueness of the weak solutions and their continuous dependence on the initial data enable us to define the solution semiflow of the diffusive Hindmarsh-Rose equations \eqref{ueq}-\eqref{weq} on the space $H$ as follows:
$$
	S(t): g_0 \longmapsto g(t, g_0) = (u(t, \cdot), v(t, \cdot), w(t, \cdot)), \;\; g_0 = (u_0, v_0, w_0) \in H, \;\; t \geq 0,
$$
where $g(t, g_0)$ is the weak solution with the initial status $g(0) = g_0$. We shall call this semiflow $\{S(t)\}_{t \geq 0}$ the Hindmarsh-Rose semiflow associated with the formulated evolutionary equation \eqref{pb}. 

\begin{theorem} \label{Th2}
	There exists an absorbing set in the space $H$ for the Hindmarsh-Rose semiflow $\{S(t)\}_{t \geq 0}$, which is the bounded ball 
\beq \label{abs}
	B_0 = \{ g \in H: \| g \|^2 \leq K_1\}
\eeq 
where $K_1 = \frac{M |\gw |}{\min \{C_1, 1\}} + 1$.
\end{theorem}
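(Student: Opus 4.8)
The plan is to deduce the absorbing property directly from the global dissipative estimate \eqref{dse} established in the proof of Theorem \ref{Lm2}. First I would record the elementary two-sided comparison
$$
	\min \{C_1, 1\}\, \| g \|^2 \;\leq\; C_1 \|u\|^2 + \|v\|^2 + \|w\|^2 \;\leq\; \max \{C_1, 1\}\, \| g \|^2,
$$
valid for every $g = (u, v, w) \in H$, which lets one pass freely between the natural $H$-norm $\|g\|^2 = \|u\|^2 + \|v\|^2 + \|w\|^2$ and the weighted Lyapunov-type combination appearing on the left-hand side of \eqref{dse}.

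Next, given an arbitrary bounded set $B \subset H$, put $\rho = \sup_{g_0 \in B} \| g_0 \|$. Applying \eqref{dse} to an arbitrary $g_0 \in B$, bounding the initial weighted norm from above by $\max\{C_1,1\}\rho^2$ and the weighted norm of $g(t, g_0)$ from below by $\min\{C_1,1\}\|g(t,g_0)\|^2$, one obtains
$$
	\min \{C_1, 1\}\, \| g(t, g_0) \|^2 \;\leq\; e^{- r_1 t}\, \max \{C_1, 1\}\, \rho^2 + M |\gw|, \qquad t \geq 0.
$$
Since $e^{-r_1 t} \to 0$ as $t \to \infty$, there is a finite time $T_0 = T_0(B)$ — any $T_0$ with $e^{- r_1 T_0}\, \max\{C_1,1\}\, \rho^2 \leq \min\{C_1, 1\}$ will do — such that for all $t \geq T_0$ and all $g_0 \in B$,
$$
	\| g(t, g_0) \|^2 \;\leq\; \frac{M |\gw|}{\min \{C_1, 1\}} + 1 \;=\; K_1,
$$
that is, $S(t) B \subset B_0$ for every $t \geq T_0$. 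This is precisely the absorbing property of Definition \ref{Dabsb}, and $B_0$ is plainly a bounded ball in $H$.

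There is no genuine obstacle: the theorem is an immediate corollary of the dissipative bound \eqref{dse}. The only points needing a little care are the bookkeeping of the weights $C_1$ versus $1$ when converting between $\|g\|^2$ and the weighted quantity, and the deliberate choice to inflate the radius of $B_0$ by the additive constant $1$ rather than taking it to be exactly $M|\gw| / \min\{C_1,1\}$; this slack is what guarantees \emph{absorption in finite time} $T_0$ depending only on $B$, instead of mere asymptotic attraction to the ball of the borderline radius.
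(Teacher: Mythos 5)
Your proposal is correct and follows essentially the same route as the paper: both deduce the absorbing property directly from the dissipative estimate \eqref{dse}, using the equivalence between $\|g\|^2$ and the weighted combination $C_1\|u\|^2+\|v\|^2+\|w\|^2$ and the decay of $e^{-r_1 t}$ to produce a finite absorbing time depending only on the bounded set $B$. If anything, your explicit criterion $e^{-r_1 T_0}\max\{C_1,1\}\rho^2 \leq \min\{C_1,1\}$ is a slightly more careful bookkeeping of the constants than the formula \eqref{T0B} given in the paper.
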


\begin{proof}
From the uniform estimate \eqref{dse} in Theorem \ref{Lm2} we see that 
\beq \label{lsp2}
	\limsup_{t \to \infty} \, (\|u(t)\|^2 + \|v(t)\|^2 + \|w(t)\|^2 ) < K_1 = \frac{M |\gw |}{\min \{C_1, 1\}} + 1
\eeq
for all weak solutions of \eqref{pb} with any initial data $g_0 \in H$. Moreover, for any given bounded set $B = \{g \in H: \|g \| \leq R\}$ in $H$, there exists a finite time 
\beq \label{T0B}
	T_0 (B) = \frac{1}{r_1} \log \, (R^2 \max \{C_1, 1\})
\eeq
such that $\|u(t)\|^2 + \|v(t)\|^2 + \|w(t)\|^2 < K_1$ for all $t > T_0 (B)$ and for all $g_0 \in B$. Thus, by Definition \eqref{Dabsb}, the bounded ball $B_0$ is an absorbing set and the Hindmarsh-Rose semiflow is dissipative in the phase space $H$. 
\end{proof}

\begin{corollary} \label{cor1}
	There also exists an absorbing set in the space $H$ for the semiflow generated by the partly diffusive Hindmarsh-Rose equations \eqref{pHR} and \eqref{qHR}.
\end{corollary}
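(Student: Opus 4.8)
The plan is to observe that the $L^2$ energy estimate proved in Theorem~\ref{Lm2} and exploited in Theorem~\ref{Th2} never uses the diffusion terms except through their sign. In the derivation of \eqref{u1}, \eqref{v1}, \eqref{w1} one takes the $L^2$ inner product of the three equations with $C_1 u$, $v$, and $w$; the Laplacian terms contribute precisely $C_1 d_1\|\nb u\|^2$, $d_2\|\nb v\|^2$, $d_3\|\nb w\|^2$, which are nonnegative and sit on the left-hand side, and which are then simply discarded in passing from \eqref{g2} to \eqref{E1} and through the Gronwall step leading to \eqref{dse}. Consequently, for system \eqref{pHR}, where the $v$- and $w$-equations carry no Laplacian, and for system \eqref{qHR}, where only the $w$-equation does, exactly the same chain of inequalities holds, with the corresponding gradient terms merely absent from the left-hand side.

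Concretely, I would first note that the partly diffusive systems generate a semiflow on $H$: the nonlinearity $f$ of \eqref{opf} is unchanged and still locally Lipschitz, and the linear part is the generator of an analytic semigroup in the $u$-component together with the bounded multiplication semigroups $e^{-t}$ and $e^{-rt}$ in the $v$- and $w$-components, so the local existence and uniqueness argument of Lemma~\ref{Lwn} carries over with only notational changes. Then I would repeat the computation of Theorem~\ref{Lm2} word for word, choosing the same $C_1 = \frac1b(\gb^2+4)$ and the same constants $C_2$, $M$, $r_1$, and replacing $d = 2\min\{d_1,d_2,d_3\}$ by $d = 2d_1$ for \eqref{pHR} (resp. $d = 2\min\{d_1,d_2\}$ for \eqref{qHR}). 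This produces the differential inequality \eqref{E2}, now without $\|\nb v\|^2$ and/or $\|\nb w\|^2$, hence the bound \eqref{dse} with an identical right-hand side, the global existence conclusion exactly as in Theorem~\ref{Lm2}, and the $\limsup$ estimate \eqref{lsp2}. Therefore the ball $B_0$ of \eqref{abs} with $K_1 = \frac{M|\gw|}{\min\{C_1,1\}}+1$ is an absorbing set in $H$ for the semiflows of \eqref{pHR} and of \eqref{qHR}, with the same absorbing time $T_0(B)$ as in \eqref{T0B}.

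I expect no genuine obstacle here, only two points deserving a line of care. First, for the non-diffusive components one must justify $\frac12\frac{d}{dt}\|v\|^2 = \inpt{\psi(u)-v, v}$ (and likewise for $w$); this is immediate because $v$ (resp. $w$) solves an ODE in $H$ with an $H$-valued right-hand side, so $t\mapsto\|v(t)\|^2$ is absolutely continuous and the product rule applies. Second, one should check that none of the Young-inequality absorptions in \eqref{u1}--\eqref{w1} secretly relied on the gradient norms that have now disappeared; they did not, since every right-hand side term there was controlled by $u^4$, $v^2$, $w^2$, or a constant multiple of $|\gw|$, never by $\|\nb u\|$, $\|\nb v\|$, or $\|\nb w\|$.
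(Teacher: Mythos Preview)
Your proposal is correct and follows essentially the same approach as the paper: the paper's proof consists precisely of the observation that the estimates of Theorem~\ref{Lm2} go through verbatim once the gradient terms $\|\nabla v\|^2$ and $\|\nabla w\|^2$ are dropped, yielding in place of \eqref{E2} an inequality with only $d_1 C_1\|\nabla u\|^2$ on the left, from which \eqref{dse} and the absorbing ball follow exactly as before. Your additional remarks on well-posedness of the hybrid system and on the absolute continuity of $t\mapsto\|v(t)\|^2$ are valid points of care that the paper leaves implicit.
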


\begin{proof}
	The proof of Lemma \eqref{Lm2} is still valid without the terms of $\|\nb v \|^2$ and $\| \nb w \|^2$. Instead of \eqref{E2} we have the inequality 
\begin{equation} \label{pE2}
	\begin{split}
	\frac{d}{dt} &(C_1 \|u(t)\|^2 + \|v(t)\|^2 + \|w(t)\|^2)  + d _1C_1 \|\nb u\|^2  \\
	+ &\, r_1 (C_1 \| u\|^2 + \| v \|^2 + \|w\|^2) \leq \left(2C_2 + \frac{C_1^2}{32}\right) |\gw |
	\end{split}
\end{equation}
It leads to the same result \eqref{dse}. Therefore, the same argument as in the proof of Theorem \eqref{Th2} shows that there exists an absorbing set for the semiflow generated by \eqref{pHR} in the phase space $H$. Similarly the result holds for the semiflow of \eqref{qHR}.
\end{proof}

\subsection{\textbf{Absorbing Property of the Hindmarsh-Rose Semiflow in $L^4$ and $L^6$}}

The following theorem claims the absorbing property in higher-order integrable spaces for each solution trajectory. It plays a key role to establish the asymptotic compactness of the solution semiflow associated with the partly diffusive Hindmarsh-Rose equations \eqref{pHR} and \eqref{qHR} in Section 5  as well as the $H^2$-regularity of the global attractor in Section 4. 
\begin{theorem} \label{Th2p}
	For $p = 2$ and $3$ respectively, there exists a constant $K_p > 0$ such that the absorbing property with respect to the space $L^{2p} (\gw, \mathbb{R}^3)$,
\beq \label{Kp}
	\limsup_{t \to \infty} \, \left(\|u(t)\|^{2 p}_{L^{2p}} + \|v(t)\|^{2 p}_{L^{2p}} + \|w(t)\|^{2 p}_{L^{2p}} \right) < K_p
\eeq
is satisfied by every weak solution $S(t) g_0 = g(t, g_0)) = (u(t), v(t), w(t))$ of the diffusive Hindmarsh-Rose equations \eqref{pb} for any geven initial state $g_0 \in H$. 
\end{theorem}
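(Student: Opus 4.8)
The plan is to run scaled $L^{2p}$ energy estimates for $p=2$ and $p=3$ in the spirit of the proof of Theorem~\ref{Lm2}, now with the multipliers $u^{2p-1}$, $v^{2p-1}$, $w^{2p-1}$ (all exponents even, so no absolute values are needed) in place of $C_1u$, $v$, $w$; these formal manipulations are legitimate since weak solutions are already strong for $t>0$ by Lemma~\ref{Lwn} (or one argues on Galerkin approximants). Taking $\inpt{\eqref{ueq}, C_1 u^{2p-1}}$, $\inpt{\eqref{veq}, v^{2p-1}}$, $\inpt{\eqref{weq}, w^{2p-1}}$, each Laplacian yields a nonnegative dissipation $d_i(2p-1)\int_\gw|\cdot|^{2p-2}|\nb\cdot|^2\,dx\ge0$ that I would simply discard. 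In the $u$-equation the sink $-bC_1\int_\gw u^{2p+2}\,dx$ dominates everything else: by Young's inequality \eqref{Hld} the terms $aC_1\int u^{2p+1}$, $JC_1\int u^{2p-1}$ and the cross terms $C_1\int u^{2p-1}v$, $-C_1\int u^{2p-1}w$ get absorbed into a small fraction of $-bC_1\int u^{2p+2}$ plus $\ve(\|v\|_{L^{2p}}^{2p}+\|w\|_{L^{2p}}^{2p})$ (or, if one prefers, plus $\ve(\|v\|^2+\|w\|^2)$, already controlled by Theorem~\ref{Th2}) and a multiple of $|\gw|$; using $u^{2p+2}\ge\ggd u^{2p}-C_\ggd$ I would split off a genuine $-\gk\|u\|_{L^{2p}}^{2p}$ term while retaining a residual $-\gk'\int_\gw u^{2p+2}\,dx$. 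The $w$-equation is harmless, since $u$ enters $q(u-c)$ only to first power: $q\int_\gw(u-c)w^{2p-1}\,dx\le\ve\|w\|_{L^{2p}}^{2p}+C\int_\gw(u^{2p}+1)\,dx$, and $\int u^{2p}$ is swallowed by the residual $-\gk'\int u^{2p+2}$.

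The main obstacle is the $v$-equation, precisely because $\psi(u)=\ap-\gb u^2$ is quadratic in $u$. With $-\|v\|_{L^{2p}}^{2p}$ the only sink and $\ap\int v^{2p-1}\le\ve\|v\|_{L^{2p}}^{2p}+C|\gw|$ harmless, there remains $\gb\int_\gw u^2|v|^{2p-1}\,dx$, and the straightforward Young bound $\le\ve\|v\|_{L^{2p}}^{2p}+C\int_\gw u^{4p}\,dx$ calls for control of $\|u\|_{L^{4p}}$ — a strictly higher norm than the $\|u\|_{L^{2p+2}}$ the $u$-equation produces, since $4p>2p+2$ for $p\ge2$. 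I would close this gap by running, in parallel, one higher-order estimate for $u$ alone: take $\inpt{\eqref{ueq}, \gL u^{2m-1}}$ with $2m+2\ge 4p$ and a large weight $\gL>0$, whose own sink $-b\gL\int u^{2m+2}$ both dominates $\int u^{4p}$ (by Young) and overwhelms the $C(\ve)\int u^{4p}$ coming from the $v$-estimate once $\gL$ is chosen large, while its own cross terms $\gL\int u^{2m-1}v$, $\gL\int u^{2m-1}w$ couple back only to $\|v\|_{L^{2p}}^{2p}$, $\|w\|_{L^{2p}}^{2p}$ and $\ve\int u^{2m+2}$, so the ladder stops after this one auxiliary rung (and only $p=2,3$ occur). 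Alternatively, since \eqref{pb} is fully diffusive, one may instead retain the $v$-diffusion dissipation $d_2(2p-1)\int_\gw v^{2p-2}|\nb v|^2\,dx$, which bounds $\|\nb(|v|^{p-1}v)\|^2$ and hence, via $H^1(\gw)\hookrightarrow L^6(\gw)$ for $n\le3$, the norm $\|v\|_{L^{6p}}$; then $\gb\int_\gw u^2|v|^{2p-1}\,dx$ is handled by H\"older with only a low power of $u$, interpolated by \eqref{GN} between the $H$-bound of Theorem~\ref{Th2} and the residual $L^{2p+2}$ control.

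Finally I would add the weighted differential inequalities and fix $C_1$, $\gL$ and the several $\ve$'s in the right order — the $v$-level $\ve$ first, then $\gL$ large enough, then the remaining $\ve$'s small relative to $1/\gL$ — so that every cross-coupling is absorbed, arriving at
\begin{gather*}
 \frac{d}{dt}\,\Phi_p(t)+\gk_p\,\Phi_p(t)\le C_p\,|\gw|, \\
 \Phi_p=C_1\|u\|_{L^{2p}}^{2p}+\|v\|_{L^{2p}}^{2p}+\|w\|_{L^{2p}}^{2p}+\gL\|u\|_{L^{2m}}^{2m}.
\end{gather*}
The Gronwall inequality then gives $\limsup_{t\to\infty}\Phi_p(t)\le C_p|\gw|/\gk_p$, and since $\Phi_p$ dominates $\|u\|_{L^{2p}}^{2p}+\|v\|_{L^{2p}}^{2p}+\|w\|_{L^{2p}}^{2p}$, taking $K_p:=C_p|\gw|/\gk_p+1$ yields \eqref{Kp} for $p=2$ and $p=3$; only the arithmetic of the exponents changes between the two cases.
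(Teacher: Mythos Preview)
Your argument is sound; both strategies you outline close. The paper takes your second route: it retains the $v$-diffusion term $\frac{2p-1}{p^2}\,d_2\|\nb(v^p)\|^2$ and handles the obstacle $-\gb\int_\gw u^2 v^{2p-1}\,dx$ by first splitting via Young into $\frac{b}{8}\int u^{2p+2}$ (absorbed by the cubic sink) plus $C\int v^{(2p-1)(1+1/p)}$, then bounding the latter by $\ve\int v^{2p+1}+C|\gw|$ and applying a Gagliardo--Nirenberg interpolation of $\|v^p\|_{L^{(2p+1)/p}}$ between $\|\nb(v^p)\|$ (absorbed into the diffusion) and a power of $\|v\|^2\le K_1$ already supplied by Theorem~\ref{Th2}; the case $p=3$ then bootstraps off the freshly established $p=2$ bound, since the low-order residual becomes $\|v\|_{L^4}^4<K_2$. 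Your primary route---the auxiliary rung $\inpt{\eqref{ueq},\gL u^{2m-1}}$ with $2m+2\ge 4p$---is a genuinely different and self-contained closure: it uses only Young's inequality, never invokes \eqref{GN}, and makes no use of $d_2>0$, so it would carry over verbatim to the partly diffusive system \eqref{pHR} (which the paper treats separately in Section~5 by Kolmogorov--Riesz). The cost is the extra state variable $\gL\|u\|_{L^{2m}}^{2m}$ in $\Phi_p$ and one more layer of constants to order; the paper's approach is tidier for the fully diffusive problem but is tied to having diffusion in the $v$-equation.
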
 

\begin{proof}
	Based on Lemma \ref{Lm2} and Theorem \ref{Th2}, the weak solution $(u(t), v(t), w(t))$ of the Hindmarsh-Rose evolutionary equation \eqref{pb} exists globally in time $t \in [0, \infty)$ for any initial state $g_0 = (u_0, v_0, w_0) \in H$. By Lemma \ref{Lwn}, there exists a time $t_0 \in (0, 1)$ such that for space dimension $n \leq 3$,
$$
	S(t_0) g_0 = g(t_0, g_0) \in E = H^1 (\gw, \mathbb{R}^3) \subset L^6 (\gw, \mathbb{R}^3) \subset L^4 (\gw, \mathbb{R}^3). 
$$
According to Lemma \ref{Lwn} the weak solution $S(t)g_0$ becomes a strong solution on $[t_0, \infty)$ and satisfies
$$
	S(\cdot)g_0 \in C([t_0, \infty), E) \subset C([t_0, \infty), L^6 (\gw, \mathbb{R}^3)) \subset C([t_0, \infty), L^4 (\gw, \mathbb{R}^3)).
$$
This parabolic regularity with time shifting enables us to simply assume that the initial state $g_0 \in E \subset L^6 (\gw, \mathbb{R}^3) \subset L^4 (\gw, \mathbb{R}^3))$ and, by the bootstrap argument for resulting strong solutions, even assume that $g_0 \in H^2 (\gw, \mathbb{R}^3) \subset L^8 (\gw, \mathbb{R}^3)$ in proving the longtime dynamical property \eqref{Kp}.

Step 1. Common estimates.

For $p = 2$ and $p= 3$, we take the $L^2$ inner-product  $\inpt{\eqref{ueq}, u^{2p-1}}$ to get
\beq \label{u2}
	\begin{split}
	&\frac{1}{2p} \frac{d}{dt} \|u(t)\|^{2p}_{L^{2p}} + d_1 (2p-1) \|u^{p-1} \nb u \|^2_{L^2} \\[3pt]
	= &\,  \frac{1}{2p} \frac{d}{dt} \|u(t)\|^{2p}_{L^{2p}} + \frac{1}{p^2} d_1 (2p - 1) \| \nb (u^p)\|^2_{L^2} \\[3pt]
	= &\, \int_\gw [\vp (u) u^{2p-1} + u^{2p-1} (v - w - J)]\, dx \\[2pt]
	= &\, \int_\gw [au^{2p+1} - bu^{2p+2} + u^{2p-1} (v - w - J)]\, dx.
	\end{split}
\eeq
On the right-hand side of the inequality \eqref{u2}, we have
\begin{gather*}
	a u^{2p+1} + u^{2p-1} (v - w + J) \leq \frac{b}{4} u^{2p+2} + C_b\left(a^{2p +2} + (|v(t)|^{\frac{2p+2}{3}} + |w(t)|^{\frac{2p+2}{3}}  + J^{\frac{2p+2}{3}} \right) \\[4pt]
	\leq \frac{1}{4} bu^{2p+2} + C_b\left(a^{2p +2} + C_{p,r} + J^{\frac{2p+2}{3}} \right) + \frac{1}{4} |v(t)|^{2p} + \frac{r}{4} |w(t)|^{2p}
\end{gather*}
where $C_b > 0$ is a constant only depending on the parameter $b$. Using the Young's inequality for each term on the left-hand side in the above inequality, the exponent 
$$
	\frac{2p+2}{3} \leq 2p \quad \text{for} \quad  p = 2, 3
$$
and  $C_{p,r} > 0$ is the constant only depending on $p$ and the parameter $r$, which is generated from using the following Young's inequality, 
\beq \bl{2p3}
	|v(t,x)|^{\frac{2p+2}{3}} + |w(t,x)|^{\frac{2p+2}{3}} \leq C_{p,r} + \frac{1}{4} |v(t,x)|^{2p} + \frac{r}{4} |w(t,x)|^{2p}.
\eeq
From \eqref{u2} and the above inequality it follows that 
\beq \label{u3}
	\begin{split}
	&\frac{1}{2p} \frac{d}{dt} \|u(t)\|^{2p}_{L^{2p}} + d_1 (2p-1) \|u^{p-1} \nb u \|^2_{L^2} \\[3pt]
	= &\, \int_\gw [au^{2p+1} - bu^{2p+2} + u^{2p-1} (v - w - J)]\, dx \\
	\leq &\, C_b\left(a^{2p +2} + C_{p,r} + J^{\frac{2p+2}{3}} \right) | \gw | - \frac{3}{4} \int_\gw b u^{2p+2} \, dx + \frac{1}{4} \int_\gw (v^{2p} + r w^{2p})\, dx.
	\end{split}
\eeq
The main controlling term is $- (3/4) \int_\gw b u^{2p+2} \, dx$.

\vspace{3pt}
Then taking the $L^2$ inner-product $\inpt{\eqref{veq}, v^{2p-1}}$, we obtain
\beq \label{v2}
	\begin{split}
	&\frac{1}{2p} \frac{d}{dt} \|v(t)\|^{2p}_{L^{2p}} + d_2 (2p-1) \|v^{p-1} \nb v \|^2_{L^2} \\
	= &\, \frac{1}{2p} \frac{d}{dt} \|v(t)\|^{2p}_{L^{2p}} + \frac{1}{p^2} d_2 (2p - 1) \| \nb (v^p)\|^2_{L^2} \\
	= &\, \int_\gw [\psi (u) v^{2p-1} - v^{2p}]\, dx =  \int_\gw [\alpha v^{2p-1} - \beta u^2 v^{2p-1} - v^{2p}]\, dx.
	\end{split}
\eeq
It is challenging to control the middle term on the right-hand side,
\beq \bl{uv}
	- \int_\gw \beta u^2 v^{2p-1} \, dx.
\eeq
We shall exploit the Gagliardo-Nirenberg inequality \eqref{GN} and the absorbing result in Theorem \ref{Th2} to handle this issue for $p = 2$ and $p = 3$ respectively. First use the Young's inequality to \eqref{Hld} and get
\beq \bl{Huv}
	\begin{split}
	- \int_\gw \beta u^2 v^{2p-1} \, dx &\leq \frac{1}{8} \int_\gw b u^{2p+2}\, dx + C_{b, \beta} \int_\gw v^{(2p-1)(1 + \frac{1}{p})}\, dx \\
	& = \frac{1}{8} \int_\gw b u^{2p+2}\, dx + C_{b, \beta} \int_\gw v^{2p + 1 - \frac{1}{p}}\, dx
	\end{split}
\eeq

Step 2. Prove \eqref{Kp} for $p = 2$.

For $p = 2$, the second term on the right-hand side of \eqref{Huv} is 
\beq \bl{Cv}
	C_{b, \beta} \int_\gw v^{5 - \frac{1}{2}}\, dx \leq \ve_1 \int_\gw v^{5}\, dx + C_{b, \beta, \ve_1} |\gw |,
\eeq
where $\ve_1 > 0$ is a small constant to be chosen and the constant $C_{b, \beta, \ve_1}$ depends on $C_{b, \beta}$ and $\ve_1$. Apply the Gagliardo-Nirenberg inequality \eqref{GN} to the interpolation of spaces
\beq \bl{L5A}
	L^1 (\gw) \hookrightarrow L^{2.5} (\gw) \hookrightarrow H^1 (\gw)
\eeq
to see there exists a constant $\eta_1 > 0$ such that
\beq \bl{L5B}
	 \int_\gw v^5 \, dx = \|v^2 \|^{2.5}_{L^{2.5}} \leq \eta_1 \left(\|\nb (v^2)\|^\theta  \, \|v^2 \|_{L^1}^{1-\theta} \right)^{2.5}
\eeq
where
$$
	- \frac{3}{2.5} = \theta \left(1- \frac{3}{2}\right) + (1 - \theta) (-3) \quad \text{so that} \quad \theta = \frac{18}{25}, \;\; 1 - \theta = \frac{7}{25}.
$$
Then for any weak solution $g(t) = (u(t), v(t), w(t))$ of the HIndmarsh-Rose evolutionary equation \eqref{pb} with any initial state $g_0 \in H$, there is a finite time $T(g_0)$ such that
\beq \bl{v5}
	\begin{split}
	 \int_\gw v^5\, dx &\leq \eta_1 \|\nb (v^2)\|^{18/10}  \, \|v^2 \|_{L^1}^{7/10} = \eta_1 \|\nb (v^2)\|^{9/5}  \, \|v^2 \|_{L^1}^{7/10} \\
	 & \leq \eta_1 \|\nb (v^2)\|^2 + \eta_1 \|v^2 \|_{L^1}^{7} \leq  \eta_1 \|\nb (v^2)\|^2 + \eta_1 K_1^7, \quad t \geq T(g_0),
	 \end{split}
\eeq
where the constant $K_1$ is given in \eqref{abs} and the inequality \eqref{v5} follows from Theorem \ref{Th2}. Substitute \eqref{v5} into \eqref{Cv} and then into \eqref{Huv}. We obtain
\beq \bl{CS1}
	\begin{split}
	- \int_\gw \beta u^2 v^{2p-1} \, dx & \leq \frac{1}{8} \int_\gw b u^{2p+2}\, dx + \ve_1 \int_\gw v^{5}\, dx + C_{b, \beta, \,\ve_1} |\gw | \\
	& \leq \frac{1}{8} \int_\gw b u^{2p+2}\, dx + \ve_1 ( \eta_1 \|\nb (v^2)\|^2 + \eta_1 K_1^7) + C_{b, \beta, \ve_1} |\gw | .
	\end{split}
\eeq
Now we choose 
$$
	\ve_1 = \frac{1}{2 \eta_1 p^2} d_2 (2p - 1) = \frac{3\,d_2}{8\,\eta_1}, \quad \textup{for} \;\,p = 2,
$$
so that
\beq \bl{Tm}
	 \ve_1  \eta_1 \|\nb (v^2)\|^2 \leq  \frac{1}{2p^2} d_2 (2p - 1) \| \nb (v^p)\|^2_{L^2}.
\eeq
Put together \eqref{v2} for $p=2$ with the estimates \eqref{CS1} and \eqref{Tm}. We obtain
\beq \label{vp2}
	\begin{split}
	&\frac{1}{2p} \frac{d}{dt} \|v(t)\|^{2p}_{L^{2p}} + \frac{1}{2} d_2 (2p-1) \|v^{p-1} \nb v \|^2_{L^2} \\[3pt]
	\leq &\,  \int_\gw [\alpha v^{2p-1}- v^{2p}]\, dx + \frac{1}{8} \int_\gw b u^{2p+2}\, dx + \ve_1 \eta_1 K_1^7 + C_{b, \beta, \ve_1} |\gw | \\[3pt]
	\leq & \, \frac{1}{8} \int_\gw b u^{2p+2}\, dx - \frac{1}{2} \int_\gw v^{2p}\, dx + \ve_1 \eta_1 K_1^7 + (C_2(\alpha) \alpha^{2p} + C_{b, \beta, \ve_1}) |\gw |
	\end{split}
\eeq
for $t \geq T(g_0)$, where $\alpha v^{2p-1} \leq \frac{1}{2} v^{2p} + C_2(\alpha) \alpha^{2p}$ for $p=2$ and $C_2(\alpha) > 0$ is a constant. 

Next take the $L^2$ inner-product $\inpt{\eqref{weq}, w^{2p-1}}$ to obtain
\beq \label{w2}
	\begin{split}
	&\frac{1}{2p} \frac{d}{dt} \|w(t)\|^{2p}_{L^{2p}} + d_3 (2p-1) \|w^{p-1} \nb w \|^2_{L^2} \\[5pt]
	= &\, \frac{1}{2p} \frac{d}{dt} \|w(t)\|^{2p}_{L^{2p}} + \frac{1}{p^2} d_3 (2p - 1) \| \nb (w^p)\|^2_{L^2} \\[5pt]
	= &\, \int_\gw [q (u - c) w^{2p-1} - r w^{2p}]\, dx \\
	\leq &\, \int_\gw \left(C |q (u - c)|^{2p} + \frac{1}{4}\, r w^{2p} - r w^{2p}\right) dx \\
	\leq &\, \int_\gw \left(C (u^{2p} + c^{2p}) - \frac{3}{4}\, r w^{2p}\right) dx \\
	\leq &\, \int_\gw \left(C_3 + \frac{1}{8}\, bu^{2p+2} + C_3 \, c^{2p} - \frac{3}{4}\, r w^{2p}\right) dx
	\end{split}
\eeq
where $C > 0$ is a generic constant from the expansion of the term $|q (u - c)|^{2p}$, and the constant $C_3 > 0$ is from the Young's inequality to raise the power $u^{2p} = u^4$ to $u^{2p+2} = u^6$ in the last step.

Next we sum up the estimates \eqref{u3}, \eqref{vp2} and \eqref{w2} for $p = 2$:
\begin{equation*}
	\begin{split}
	&\frac{1}{2p} \frac{d}{dt} (\|u(t)\|^{2p}_{L^{2p}} + \|v(t)\|^{2p}_{L^{2p}} + \|w(t)\|^{2p}_{L^{2p}}) \\[10pt]
	 & + \frac{1}{2} (2p-1) (d_1 \|u^{p-1} \nb u \|^2_{L^2} + d_2 \|v^{p-1} \nb v \|^2_{L^2} + d_3 \|w^{p-1} \nb w \|^2_{L^2}) \\[8pt]
	\leq &\, C_b\left(a^{2p +2} + C_{p,r} + J^{\frac{2p+2}{3}} \right) | \gw | - \frac{3}{4} \int_\gw b u^{2p+2} \, dx + \frac{1}{4} \int_\gw (v^{2p} + r w^{2p})\, dx \\
	& + \frac{1}{8} \int_\gw b u^{2p+2}\, dx - \frac{1}{2} \int_\gw v^{2p}\, dx + \ve_1 \eta_1 K_1^7 + (C_2 (\alpha) \alpha^{2p} + C_{b, \beta, \ve_1}) |\gw |  \\
	& + \int_\gw \left(C_3 + \frac{1}{8}\, bu^{2p+2} + C_3 \, c^{2p} - \frac{3}{4}\, r w^{2p}\right) dx \\[6pt]
	\leq &\, C_b\left(a^{2p +2} + C_{p,r} + J^{\frac{2p+2}{3}} \right) | \gw | + (C_2 (\alpha) \alpha^{2p} + C_{b, \beta, \ve_1} + C_3 (1 + c^{2p})) |\gw | \\
	& + \ve_1 \eta_1 K_1^7 - \left(\int_\gw \frac{1}{2} b u^{2p+2} \, dx +  \frac{1}{4} \int_\gw v^{2p}\, dx + \frac{1}{2} \int_\gw r w^{2p} \right) dx, \;\; \text{for} \;\; t \geq T(g_0).
	\end{split}
\end{equation*}
It follows that, for $p=2$,
\beq \label{G2}
	\begin{split}
	&\frac{1}{2p} \frac{d}{dt} (\|u(t)\|^{2p}_{L^{2p}} + \|v(t)\|^{2p}_{L^{2p}} + \|w(t)\|^{2p}_{L^{2p}}) \\[2pt]
	 & + \frac{1}{2} (2p-1) (d_1 \|u^{p-1} \nb u \|^2_{L^2} + d_2 \|v^{p-1} \nb v \|^2_{L^2} + d_3 \|w^{p-1} \nb w \|^2_{L^2}) \\[2pt]
	\leq &\, C_4 |\gw| + \ve_1 \eta_1 K_1^7 - \frac{1}{4} \left(\int_\gw b u^{2p+2} \, dx +  \int_\gw v^{2p}\, dx +  \int_\gw r w^{2p} \right) dx \\
	\leq &\, C_4 |\gw| + \ve_1 \eta_1 K_1^7 - \frac{1}{4} \left(\int_\gw b u^{2p} \, dx - \frac{b}{p+1} +  \int_\gw v^{2p}\, dx +  \int_\gw r w^{2p} \right) dx \\
	\leq &\, \left(C_4 + \frac{b}{4(p+1)}\right) |\gw| + \ve_1 \eta_1 K_1^7  \\
	 - &\,\frac{1}{4} \left(\int_\gw b u^{2p} \, dx + \int_\gw v^{2p}\, dx +  \int_\gw r w^{2p} \right) dx, \quad \text{for} \;\; t \geq T(g_0), \; g_0 \in H,
	\end{split}
\eeq 
where 
$$
        C_4 = C_b\left(a^{2p +2} + C_{p,r} + J^{\frac{2p+2}{3}} \right) + C_2 (\alpha) \alpha^{2p} + C_{b, \beta, \ve_1} + C_3 (1 + c^{2p}).
$$
Then we can apply the Gronwall inequality to the following differential inequality reduced from \eqref{G2} by moving the three integral terms to the left-hand side,

\beq \label{L2p}
	\begin{split}
	&\frac{d}{dt} (\|u(t)\|^{4}_{L^{4}} + \|v(t)\|^{4}_{L^{4}} + \|w(t)\|^{4}_{L^{4}}) \\[4pt]
	+ &\, \min \{b, r, 1\} (\|u(t)\|^{4}_{L^{4}} + \|v(t)\|^{4}_{L^{4}} + \|w(t)\|^{4}_{L^{4}}) \\
	\leq &\, 4 \left(C_4 + \frac{b}{12}\right) |\gw| + 4 \ve_1 \eta_1 K_1^7, \quad \text{for} \;\; t \geq T(g_0).
	\end{split}
\eeq
Hence we obtain the bounded estimate
\beq \label{Lsp}
	\|(u(t), v(t), w(t))\|_{L^{4}}^{4} \leq e^{-\lambda (t - T(g_0))} \|g_0\|^4_{L^4} + M_2 |\gw |  + 4 \lambda^{-1} \ve_1 \eta_1 K_1^7,  \;\, t \geq T(g_0),  g_0 \in H,
\eeq
where $K_1$ is shown in \eqref{abs} and 
$$
	\lambda = \min \{b, r, 1\} \quad \text{and} \quad M_2 = \frac{1}{\lambda} \left(4C_4 +  \frac{b}{3}\right). 
$$
Let $t \to \infty$ in \eqref{Lsp}.Then the absorbing property \eqref{Kp} is proved for $p = 2$, 
\beq \bl{vK2}
	\limsup_{t \to \infty}\, (\|u(t)\|^4_{L^4} + \|v(t)\|^4_{L^4} + \|w(t)\|^4_{L^4}) < K_2
\eeq
and
\beq \bl{K2}
	K_2  = M_2 |\gw| + 4\lambda^{-1} \ve_1 \eta_1 K_1^7 + 1.     
\eeq

Step 3. Prove \eqref{Kp} for $p = 3$ by means of bootstrap argument.

For $p = 3$, the second term on the right-hand side of \eqref{Huv} is 
\beq \bl{Cv3}
	C_{b, \beta} \int_\gw v^{7 - \frac{1}{3}}\, dx \leq \ve_2 \int_\gw v^{7}\, dx + C_{b, \beta, \ve_2} |\gw |,
\eeq
where $\ve_2 > 0$ is a small constant to be chosen and the constant $C_{b, \beta, \ve_2}$ depends on $C_{b, \beta}$ and $\ve_2$. Similarly we use the Gagliardo-Nirenberg inequality \eqref{GN} for spaces
\beq \bl{L7A}
	L^1 (\gw) \hookrightarrow L^{7/3} (\gw) \hookrightarrow H^1 (\gw)
\eeq
to claim that there exists a constant $\eta_2 > 0$ such that
\beq \bl{L7B}
	 \int_\gw v^7 \, dx = \|v^3 \|^{7/3}_{L^{7/3}} \leq \eta_2 \left(\|\nb (v^3)\|^\theta  \, \|v^3 \|_{L^1}^{1-\theta} \right)^{7/3}
\eeq
where
$$
	- \frac{3}{7/3} = \theta \left(1- \frac{3}{2}\right) + (1 - \theta) (-3) \quad \text{so that} \quad \theta = \frac{24}{35}, \;\; 1 - \theta = \frac{11}{35}.
$$
Then
\beq \bl{v7}
	 \int_\gw v^7\, dx \leq \eta_2 \|\nb (v^3)\|^{8/5}  \, \|v^3 \|_{L^1}^{11/15} \leq \eta_2 \|\nb (v^3)\|^2 + \eta_2 \|v^3 \|_{L^1}^{11/3}.
\eeq
Substitute \eqref{v7} into \eqref{Cv3} and then into \eqref{Huv}. We obtain
\beq \bl{CS2}
	\begin{split}
	- &\int_\gw \beta u^2 v^{2p-1} \, dx  \leq \frac{1}{8} \int_\gw b u^{2p+2}\, dx + \ve_2 \int_\gw v^{7}\, dx + C_{b, \beta, \,\ve_2} |\gw | \\[5pt]
	 \leq &\, \frac{1}{8} \int_\gw b u^{2p+2}\, dx + \ve_2 ( \eta_2 \|\nb (v^3)\|^2 + \eta_2 \|v^3 \|_{L^1}^{11/3}) + C_{b, \beta, \ve_2} |\gw | .
	\end{split}
\eeq
Now we choose 
$$
	\ve_2 = \frac{1}{2 \eta_2 \,p^2} d_2 (2p - 1) = \frac{5\,d_2}{18\,\eta_2}, \quad \textup{for} \;\, p = 3,
$$
so that
\beq \bl{Tm3}
	 \ve_2  \eta_2 \|\nb (v^3)\|^2 \leq  \frac{1}{2p^2} d_2 (2p - 1) \| \nb (v^p)\|^2_{L^2}.
\eeq
Put together \eqref{v2} for $p=3$ with the estimates \eqref{CS2} and \eqref{Tm3}. We obtain
\beq \label{vp3}
	\begin{split}
	&\frac{1}{2p} \frac{d}{dt} \|v(t)\|^{2p}_{L^{2p}} + \frac{1}{2} d_2 (2p-1) \|v^{p-1} \nb v \|^2_{L^2} \\
	\leq &\,  \int_\gw [\alpha v^{2p-1}- v^{2p}]\, dx + \frac{1}{8} \int_\gw b u^{2p+2}\, dx + \ve_2 \eta_2 \|v^3 \|_{L^1}^{11/3} + C_{b, \beta, \ve_2} |\gw | \\[3pt]
	\leq &\,  \frac{1}{8} \int_\gw b u^{2p+2}\, dx - \frac{1}{2} \int_\gw v^{2p}\, dx + (C_5(\alpha) \alpha^{2p} + C_{b, \beta, \ve_2}) |\gw | + \frac{1}{2}\ve_2 \eta_2 \left(\|v \|^2_{L^2} + \|v\|^4_{L^4}\right)^{11/3} 
	\end{split}
\eeq
where we used $\alpha v^{2p-1} \leq \frac{1}{2} v^{2p} + C_5 (\alpha) \alpha^{2p}$ for $p=3$, $C_5 (\alpha) > 0$ is a constant, and $\|v^3 \|_{L^1} \leq \frac{1}{2} (\|v \|^2_{L^2} + \|v\|^4_{L^4})$. 

By the absorbing property \eqref{Kp} for $p=2$, which was proved in Step 2, we have 
$$
	\limsup_{t \to \infty} \|v(t)\|^4_{L^4} < K_2
$$
where $K_2$ is in \eqref{K2}. Thus there exists a finite time $\widetilde{T}(g_0) \geq T(g_0) > 0$ such that, for $t \geq \widetilde{T}(g_0)$,
\beq \label{v3}
	\begin{split}
	&\frac{1}{2p} \frac{d}{dt} \|v(t)\|^{2p}_{L^{2p}} + \frac{1}{2} d_2 (2p-1) \|v^{p-1} \nb v \|^2_{L^2} \\[3pt]
	\leq &\, \frac{1}{8} \int_\gw b u^{2p+2}\, dx - \frac{1}{2} \int_\gw v^{2p}\, dx + (C_5(\alpha) \alpha^{2p} + C_{b, \beta, \ve_2}) |\gw | + \frac{1}{2}\ve_2 \eta_2 \left(\|v \|^2_{L^2} + \|v\|^4_{L^4}\right)^{11/3} \\
	\leq &\, \frac{1}{8} \int_\gw b u^{2p+2}\, dx - \frac{1}{2} \int_\gw v^{2p}\, dx + (C_5(\alpha) \alpha^{2p} + C_{b, \beta, \ve_2}) |\gw | + \ve_2 \eta_2 (K_1 + K_2)^{11/3}.
	\end{split}
\eeq
Sum up the same \eqref{u3}, \eqref{w2} and the new estimate \eqref{v3} for the second component $v(t, x)$ with $p=3$. Then we have
\begin{equation*}
	\begin{split}
	&\frac{1}{2p} \frac{d}{dt} (\|u(t)\|^{2p}_{L^{2p}} + \|v(t)\|^{2p}_{L^{2p}} + \|w(t)\|^{2p}_{L^{2p}}) \\[3pt]
	 & + \frac{1}{2} (2p-1) (d_1 \|u^{p-1} \nb u \|^2_{L^2} + d_2 \|v^{p-1} \nb v \|^2_{L^2} + d_3 \|w^{p-1} \nb w \|^2_{L^2}) \\[3pt]
	\leq &\, C_b\left(a^{2p +2} + C_{p,r} + J^{\frac{2p+2}{3}} \right) | \gw | - \frac{3}{4} \int_\gw b u^{2p+2} \, dx + \frac{1}{4} \int_\gw (v^{2p} + r w^{2p})\, dx \\
	& + \frac{1}{8} \int_\gw b u^{2p+2}\, dx - \frac{1}{2} \int_\gw v^{2p}\, dx + (C_5(\alpha) \alpha^{2p} + C_{b, \beta, \ve_2}) |\gw | + \ve_2 \eta_2 (K_1 + K_2)^{11/3} \\
	& + \int_\gw \left(C_5 (\alpha) + \frac{1}{8}\, bu^{2p+2} + C_5 (\alpha) c^{2p} - \frac{3}{4}\, r w^{2p}\right) dx \\[6pt]
	\leq &\, C_b\left(a^{2p +2} + C_{p,r} + J^{\frac{2p+2}{3}} \right) | \gw | + (C_5(\alpha) \alpha^{2p} + C_{b, \beta, \ve_1} + C_5(\alpha) (1 + c^{2p})) |\gw | \\
	& + \ve_2 \eta_2 (K_1 + K_2)^{11/3} - \left(\int_\gw \frac{1}{2} b u^{2p+2} \, dx +  \frac{1}{4} \int_\gw v^{2p}\, dx + \frac{1}{2} \int_\gw r w^{2p} \right) dx,
	\end{split}
\end{equation*}
for $t \geq \widetilde{T}(g_0)$. It follows that
\begin{equation*}
	\begin{split}
	&\frac{1}{2p} \frac{d}{dt} (\|u(t)\|^{2p}_{L^{2p}} + \|v(t)\|^{2p}_{L^{2p}} + \|w(t)\|^{2p}_{L^{2p}}) \\
	 & + \frac{1}{2} (2p-1) (d_1 \|u^{p-1} \nb u \|^2_{L^2} + d_2 \|v^{p-1} \nb v \|^2_{L^2} + d_3 \|w^{p-1} \nb w \|^2_{L^2}) \\
	\leq &\, C_6 |\gw| + \ve_2 \eta_2 (K_1 + K_2)^{11/3} - \frac{1}{4} \left(\int_\gw b u^{2p+2} \, dx +  \int_\gw v^{2p}\, dx +  \int_\gw r w^{2p} \right) dx\\[3pt]
	\leq &\, C_6 |\gw| + \ve_2 \eta_2 (K_1 + K_2)^{11/3} \\
	- &\, \frac{1}{4} \left(\int_\gw b u^{2p} \, dx - \frac{b}{p+1} +  \int_\gw v^{2p}\, dx +  \int_\gw r w^{2p} \right) dx \\
	\leq &\, \left(C_6 + \frac{b}{16}\right) |\gw| + \ve_2 \eta_2 (K_1 + K_2)^{11/3} - \frac{1}{4} \left(\int_\gw b u^{2p} \, dx + \int_\gw v^{2p}\, dx +  \int_\gw r w^{2p} \right) dx,
	\end{split}
\end{equation*}
for $t \geq \widetilde{T} (g_0), \, g_0 \in H$, where
$$
        C_6 = C_b\left(a^{2p +2} + C_{p,r} + J^{\frac{2p+2}{3}} \right) + C_5 (\alpha) \alpha^{2p} + C_{b, \beta, \ve_1} + C_3 (1 + c^{2p}).
$$
Similar to \eqref{L2p}, for $p=3$ we get the differential inequality
\beq \label{L3p}
	\begin{split}
	&\frac{d}{dt} (\|u(t)\|^{2p}_{L^{2p}} + \|v(t)\|^{2p}_{L^{2p}} + \|w(t)\|^{2p}_{L^{2p}}) \\[4pt]
	+ &\, \frac{p}{2} \min \{b, r, 1\} (\|u(t)\|^{2p}_{L^{2p}} + \|v(t)\|^{2p}_{L^{2p}} + \|w(t)\|^{2p}_{L^{2p}}) \\
	\leq &\, \left( 6C_6 + \frac{3b}{8}\right) |\gw| + 6\, \ve_2 \eta_2 (K_1 + K_2)^{11/3}, \;\; \text{for} \;\; t \geq \widetilde{T} (g_0).
	\end{split}
\eeq
Apply the Gronwall inequality to \eqref{L3p} and yield
\beq \bl{g3}
	\|(u(t), v(t), w(t))\|_{L^{6}}^{6} \leq e^{-\lambda (t - T(g_0))} \|g_o \|^6_{L^6} + M_3 |\gw | + 6\lambda^{-1} \ve_2 \eta_2 (K_1 + K_2)^{11/3}
\eeq
for $t \geq \widetilde{T} (g_0),  g_0 \in H$, and here
$$
	\lambda = \frac{3}{2}\min \{b, r, 1\} \quad \text{and} \quad M_3 = \frac{1}{\lambda} \left(6\,C_6 + \frac{3b}{8}\right). 
$$
Let $t \to \infty$. Then the absorbing property \eqref{Kp} is proved for the case $p = 3$, namely,
$$
	\limsup_{t \to \infty}\, (\|u(t)\|^6_{L^6} + \|v(t)\|^6_{L^6} + \|w(t)\|^6_{L^6}) < K_3
$$
and
\beq \bl{K3}
	K_3 = M_3 \gw | + 6\lambda^{-1} \ve_2 \eta_2 (K_1 + K_2)^{11/3} + 1.
\eeq
The proof is completed.
\end{proof}

\section{\textbf{Asymptotic Compactness and Global Attractor}}

In this section, we show that the Hindmarsh-Rose semiflow $\{S(t)\}_{t \geq 0}$ is asymptotically compact and then reach the main result on the existence of a global attractor for this dynamical system associated with the diffusive Hindmarsh-Rose equations.

\begin{theorem} \label{Th3}
	For any given bounded set $B \in H$, there exists a finite time $T_1 (B) > 0$ such that for any initial state $g_0 = (u_0, v_0, w_0) \in B$, the weak solution $g(t) = S(t)g_0 = (u(t), v(t), w(t))$ of the initial value problem \eqref{pb} satisfies 
\beq \label{ac}
	\| (u(t), v(t), w(t))\|_E^2 \leq Q_1, \quad \text {for} \;\; t \geq T_1 (B)
\eeq
where $Q_1 > 0$ is a constant depending only on $K_1$ given in \eqref{abs} and $|\gw|$, and $T_1 (B) > 0$ only depends on the bounded set $B$.
\end{theorem}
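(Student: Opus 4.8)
The plan is to obtain the uniform $E$-bound \eqref{ac} by testing each of the three equations \eqref{ueq}--\eqref{weq} against the Laplacian of the corresponding component and then closing the estimate with the uniform Gronwall lemma, fed by the $H$-level and $L^4$-level absorbing bounds already available. First I would extract from the $H$-estimate \eqref{E2} its integral form: integrating \eqref{E2} over $[t, t+1]$ and using the absorbing bound $\|g(t)\|^2 < K_1$ of Theorem \ref{Th2}, there is a time $T(B) > 0$, depending only on $B$, and a constant $N_1 > 0$, depending only on $K_1$ and $|\gw|$, with
$$
	\int_t^{t+1} \left( \|\nb u(s)\|^2 + \|\nb v(s)\|^2 + \|\nb w(s)\|^2 \right) ds \leq N_1, \qquad t \geq T(B).
$$
Enlarging $T(B)$ if necessary, Theorem \ref{Th2p} with $p = 2$ also gives $\|u(t)\|_{L^4}^4 < K_2$ for $t \geq T(B)$, which I will use through the bound $\|\psi(u(t))\|^2 \leq 2\gb^2 \|u(t)\|_{L^4}^4 + 2\ap^2 |\gw| < C(K_1, |\gw|)$.

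Next I would take the $L^2$ inner products $\inpt{\eqref{ueq}, -\gd u}$, $\inpt{\eqref{veq}, -\gd v}$ and $\inpt{\eqref{weq}, -\gd w}$. Thanks to the Neumann conditions \eqref{nbc}, $\inpt{\pdr_t y, -\gd y} = \tfrac12 \tfrac{d}{dt}\|\nb y\|^2$ and $\inpt{d_i \gd y, -\gd y} = -d_i\|\gd y\|^2$, while the linear terms $-v$ and $-rw$ produce the dissipative contributions $-\|\nb v\|^2$ and $-r\|\nb w\|^2$. The crucial nonlinear term is
$$
	- \int_\gw \vp(u)\, \gd u\, dx = \int_\gw \vp'(u)\, |\nb u|^2\, dx = \int_\gw (2au - 3bu^2)\, |\nb u|^2\, dx \leq \frac{a^2}{3b} \|\nb u\|^2,
$$
using the pointwise inequality $2a|u| \leq 3bu^2 + a^2/(3b)$ to discard the favourable cubic part. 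The mixed terms $-\int_\gw (v - w + J)\gd u\, dx$, $-\int_\gw \psi(u)\gd v\, dx$ and $-q\int_\gw (u - c)\gd w\, dx$ I would each bound by Young's inequality with a small multiple of the respective $\|\gd \cdot\|^2$ (absorbed by the diffusion terms) plus $C(\|u\|^2 + \|v\|^2 + \|w\|^2 + \|u\|_{L^4}^4 + |\gw|)$, which for $t \geq T(B)$ is dominated by a constant $C_7$ depending only on $K_1$ and $|\gw|$.

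Summing the three resulting inequalities, discarding all the nonnegative $\|\gd \cdot\|^2$ terms as well as $-\|\nb v\|^2$, $-r\|\nb w\|^2$, and writing $y(t) = \|\nb u(t)\|^2 + \|\nb v(t)\|^2 + \|\nb w(t)\|^2$, I arrive at the differential inequality
$$
	\frac{d}{dt}\, y(t) \leq \frac{2a^2}{3b}\, y(t) + C_7, \qquad t \geq T(B).
$$
Since $\int_t^{t+1} y(s)\, ds \leq N_1$ for $t \geq T(B)$ by the first step, the uniform Gronwall lemma yields $y(t) \leq (N_1 + C_7)\, e^{2a^2/(3b)}$ for all $t \geq T_1(B) := T(B) + 1$. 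Combining this with $\|g(t)\|^2 < K_1$ gives \eqref{ac} with $Q_1 = K_1 + (N_1 + C_7)\, e^{2a^2/(3b)}$, a constant determined by $K_1$, $|\gw|$ and the fixed parameters, and with $T_1(B)$ depending only on $B$.

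The main obstacle is the nonlinear diffusion-type term $\int_\gw \vp'(u)|\nb u|^2\, dx$: since $\vp'(u) = 2au - 3bu^2$ is not sign-definite, one cannot produce a genuinely dissipative differential inequality for $y(t)$, so the estimate must be closed through the $L^2$-in-time control of $\|\nb u\|$, $\|\nb v\|$, $\|\nb w\|$ coming from \eqref{E2} and the uniform Gronwall lemma (alternatively, one may absorb $\tfrac{a^2}{3b}\|\nb u\|^2$ into $d_1\|\gd u\|^2$ by $\|\nb u\|^2 \leq \ve\|\gd u\|^2 + C(\ve)\|u\|^2$ and then invoke the ordinary Gronwall inequality). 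A secondary point is that closing the $v$-component estimate requires the $L^4$-absorbing bound $K_2$ of Theorem \ref{Th2p}, which is precisely why that theorem is established first.
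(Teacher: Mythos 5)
Your overall strategy coincides with the paper's: test \eqref{ueq}--\eqref{weq} against $-\gd u$, $-\gd v$, $-\gd w$, sum, and close with the uniform Gronwall lemma fed by the time-integrated $H^1$ bound extracted from \eqref{E2}. Your treatment of the cubic term is actually cleaner than the paper's: integrating the whole of $\vp(u)$ by parts and using $\vp'(u)=2au-3bu^2\le a^2/(3b)$ makes the $u$-equation contribution linear in $\|\nb u\|^2$, whereas the paper keeps $-\int au^2\gd u$, bounds it by $\frac{2a^2}{d_1}\|u\|_{L^4}^4$, and is then forced into a quadratic term $\|\nb u\|^4$ handled through the coefficient $\rho(t)$ of the uniform Gronwall lemma.

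There is, however, one genuine gap: your control of $-\int_\gw \psi(u)\gd v\,dx$ rests on the claim that, ``enlarging $T(B)$ if necessary,'' Theorem \ref{Th2p} gives $\|u(t)\|_{L^4}^4<K_2$ for all $t\ge T(B)$ with $T(B)$ depending only on $B$. Theorem \ref{Th2p} does not provide this uniformity: its entering time is $T(g_0)$, attached to the individual trajectory, and the transient in \eqref{Lsp} is $e^{-\lambda(t-T(g_0))}\|g_0\|_{L^4}^4$, where $\|g_0\|_{L^4}$ refers to the time-shifted (smoothed) data and is not bounded uniformly over a set $B$ that is bounded only in $H$. (This is precisely why Section 5 has to work to produce a \emph{uniform} $L^4$ bound at time $t=1$ via the $L^p\to L^q$ smoothing estimate \eqref{pq} before it can use Theorem \ref{Th2p}.) As written, your argument would only yield a $T_1$ depending on $g_0$, not on $B$. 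The gap is easily repaired without Theorem \ref{Th2p}: either follow the paper and write $\|u\|_{L^4}^4\le 2\eta^4\|u\|^4+2\eta^4\|\nb u\|^4\le 2\eta^4K_1^2+2\eta^4\|\nb u\|^2\,y(t)$, so that this term enters the uniform Gronwall inequality as $\rho(t)\,y(t)$ with $\int_t^{t+1}\rho\,ds$ bounded by \eqref{uu4}; or observe that integrating \eqref{E1} over $[t,t+1]$ already gives $\int_t^{t+1}\|u(s)\|_{L^4}^4\,ds\le C(K_1,|\gw|)$ for $t\ge T_0(B)$, so the $\frac{\gb^2}{d_2}\|u\|_{L^4}^4$ term can be placed in the forcing $h(t)$ of the uniform Gronwall lemma, which only requires $\int_t^{t+1}h\,ds$ to be bounded. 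With either repair your linear (or affine) differential inequality closes and the stated $Q_1$ and $T_1(B)=T(B)+1$ follow.
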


\begin{proof}
Take the $L^2$ inner-product $\inpt{\eqref{ueq}, -\gd u(t)}$ to obtain

\begin{equation*}
\begin{split}
	 &\frac{1}{2} \frac{d}{dt} \|\nb u\|^2 + d_1 \|\gd u\|^2 = \int_\gw (- a u^2 \gd u - 3 b u^2 |\nb u|^2 -v \gd u + w \gd u - J \gd u)\, dx \\
	 \leq &\, \int_\gw \left(\frac{2 v^2}{d_1} + \frac{d_1}{8} |\gd u|^2 + \frac{2 w^2}{d_1} + \frac{d_1}{8} |\gd u|^2 + \frac{2 J^2}{d_1} + \frac{d_1}{8} |\gd u|^2 + \frac{2 a^2 u^4}{d_1} + \frac{d_1}{8} |\gd u|^2\right)\, dx \\
	 - &\, \int_\gw 3bu^2 |\nb u|^2\,dx.
\end{split}
\end{equation*}
It follows that
\beq \label{nbu}
	\frac{d}{dt} \|\nb u\|^2 + d_1 \|\gd u\|^2 + 6 b \|u \nb u\|^2 \leq \frac{4}{d_1} \|v\|^2 + \frac{4}{d_1} \|w\|^2 + \frac{4 J^2}{d_1} |\gw| + \frac{4 a^2}{d_1} \|u\|^4_{L^4}.
\eeq
Next take the $L^2$ inner-product $\inpt{\eqref{veq}, -\gd v(t)}$ to get
\begin{equation*}
\begin{split}
	&\frac{1}{2} \frac{d}{dt} \|\nb v\|^2 + d_2 \|\gd v\|^2 =  \int_\gw (-\alpha \gd v + \beta u^2 \gd v - |\nb v|^2)\, dx\\
	\leq &\, \int_\gw \left(\frac{\alpha^2}{d_2} + \frac{d_2}{4} |\gd v|^2 + \frac{\beta^2 u^4}{d_2} + \frac{d_2}{4} |\gd v|^2\right)\, dx  - \|\nb v\|^2.
\end{split}
\end{equation*}
It follows that
\beq \label{nbv}
	\frac{d}{dt} \|\nb v\|^2 + d_2 \|\gd v\|^2 + 2 \|\nb v\|^2 \leq \frac{2 \alpha^2}{d_2} |\gw| + \frac{2 \beta^2}{d_2} \|u\|^4_{L_4}.
\eeq
Then taking the $L_2$ inner-product $\inpt{\eqref{weq}, -\gd w(t)}$, we get
\begin{equation*}
\begin{split}
	&\frac{1}{2} \frac{d}{dt} \|\nb w\|^2 + d_3 \|\gd w\|^2  = \int_\gw (q c \gd w - q u \gd w - r |\nb w|^2)\, dx\\
	\leq &\, \int_\gw \left(\frac{q^2 c^2}{d_3} + \frac{d_3}{4} |\gd w|^2 + \frac{q^2 u^2}{d_3} + \frac{d_3}{4} |\gd w|^2\right)\, dx - r \|\nb w\|^2.
\end{split}
\end{equation*} 
It follows that
\beq \label{nbw}
	\frac{d}{dt} \|\nb w\|^2 + d_3 \|\gd w\|^2 + 2 r \|\nb w\|^2 \leq \frac{2 q^2 c^2}{d_3} |\gw| + \frac{2 q^2}{d_3} \|u\|^2_{L_2}.
\eeq
Sum up the above estimates \eqref{nbu}, \eqref{nbv} and \eqref{nbw} to obtain
\beq \label{uu}
	\begin{split}
	&\frac{d}{dt} (\|\nb u\|^2 + \|\nb v\|^2 + \|\nb w\|^2) + d_1 \|\gd u\|^2 + d_2 \|\gd v\|^2 + d_3 \|\gd w\|^2\\[4pt] 
	& + 6 b \|u \nb u\|^2 + 2 \|\nb v\|^2 + 2 r \|\nb w\|^2\\[2pt]
	\leq &\, \frac{4}{d_1} \|v\|^2 + \frac{4}{d_1} \|w\|^2 + \frac{2 q^2}{d_3} \|u\|^2 + \left(\frac{4 a^2}{d_1} + \frac{2 \beta^2}{d_2}\right) \|u\|^4_{L_4} + \left(\frac{4 J^2}{d_1} + \frac{2 \alpha^2}{d_2} + \frac{2 q^2 c^2}{d_3}\right) |\gw|.
	\end{split}
\eeq
Since $H_1 (\gw) \hookrightarrow L_4 (\gw)$ is a continuous embedding, there is a positive constant $\eta > 0$ such that
$$
	\|u\|_{L_4} \leq \eta \|u\|_{H_1} \leq \eta \, \sqrt{\|u\|^2 + \|\nb u\|^2}.
$$
Here the Poincar\'e\xspace inequality is not valid due to the Neumann boundary condition \eqref{nbc}. Then we have
$$
\|u\|^4_{L_4} \leq \eta^4 (\|u\|^2 + \|\nb u\|^2)^2 \leq 2 \eta^4 \|u\|^4 + 2 \eta^4 \|\nb u\|^4.
$$

According to Theorem \ref{Th2} and \eqref{T0B}, there is a finite time $T_0 (B) > 0$ such that the solution $g(t) = (u(t), v(t), w(t))$ with any initial state $g_0 \in B$ will permanently enter the absorbing ball $B_0$ shown in \eqref{abs}. It implies that the sum of the  $L^2$-norms of all three components of the solution satisfies
\beq \label{uvwK}
	\|u(t)\|^2 + \|v(t)\|^2 + \|w(t)\|^2 \leq K_1, \quad \text{for any}\;  t > T_0 (B), \;  g_0 \in B.
\eeq
Then \eqref{uu} yields the following differential inequality
\beq \label{uu1}
	\begin{split}
	&\frac{d}{dt} (\|\nb u\|^2 + \|\nb v\|^2 + \|\nb w\|^2) + d_1 \|\gd u\|^2 + d_2 \|\gd v\|^2 + d_3 \|\gd w\|^2\\[5pt] 
	& + 6 b \|u \nb u\|^2 + 2 \|\nb v\|^2 + 2 r \|\nb w\|^2\\[4pt]
	\leq &\; \max \left\{\frac{4}{d_1} , \frac{2 q^2}{d_3}\right\} K_1 + \left(\frac{8 a^2}{d_1} + \frac{4 \beta^2}{d_2}\right) \eta^4 \|\nb u\|^4 \\
	& + \eta^4 K_1^2 \left(\frac{8 a^2}{d_1} + \frac{4 \beta^2}{d_2}\right) + \left(\frac{4 J^2}{d_1} + \frac{2 \alpha^2}{d_2} + \frac{2 q^2 c^2}{d_3}\right) |\gw|, \; \; t > T_0(B), \; g_0 \in B. \\
	\end{split}
\eeq
The inequality \eqref{uu1} implies that for any initial data $g_0 \in B$ it holds that
\beq \label{uu2}
	\begin{split}
	&\frac{d}{dt} \|(\nb u, \nb v, \nb w)\|^2\\[3pt]
	\leq &\; \eta^4 \left(\frac{8 a^2}{d_1} + \frac{4 \beta^2}{d_2}\right) \|(\nb u, \nb v, \nb w)\|^2\ \|(\nb u, \nb v, \nb w)\|^2\\
	& + \max \left\{\frac{4}{d_1} , \frac{2 q^2}{d_3}\right\} K_1 + \eta^4 K_1^2 \left(\frac{8 a^2}{d_1} + \frac{4 \beta^2}{d_2}\right) + \left(\frac{4 J^2}{d_1} + \frac{2 \alpha^2}{d_2} + \frac{2 q^2 c^2}{d_3}\right) |\gw|
	\end{split}
\eeq
for all $t > T_0 (B)$. 

Now we can apply the uniform Gronwall inequality \cite{SY} to the differential inequality \eqref{uu2}, which is written as
\beq \label{uu3}
	\frac{d}{dt} \sigma (t) \leq \rho (t) \, \sigma (t) + h(t), \quad \text{for} \; t > T_0 (B), \; g_0 \in B,
\eeq
where
\begin{gather*}
	\sigma(t) = \|(\nb u (t), \nb v(t), \nb w(t))\|^2, \\[4pt]
	\rho(t) = \eta^4 \left(\frac{8 a^2}{d_1} + \frac{4 \beta^2}{d_2}\right) \|(\nb u(t), \nb v(t), \nb w(t))\|^2,
\end{gather*}
and $h(t)$ is a constant
$$
	h(t) = \max \left\{\frac{4}{d_1} , \frac{2 q^2}{d_3}\right\} K_1 + \eta^4 K_1^2 \left(\frac{8 a^2}{d_1} + \frac{4 \beta^2}{d_2}\right) + \left(\frac{4 J^2}{d_1} + \frac{2 \alpha^2}{d_2} + \frac{2 q^2 c^2}{d_3}\right) |\gw|.
$$
For any $t > T_0 (B)$, integration of \eqref{E2} implies that
\begin{equation*}
	\begin{split}
	&\int_{t}^{t+1} \min \{d_1, d_2, d_3\} (C_1 \|\nb u(s)\|^2 + \|\nb v(s)\|^2+ \|\nb w(s)\|^2)\; ds\\[3pt]
	\leq & \, C_1 \|u(t)\|^2 + \|v(t)\|^2 + \|w(t)\|^2 + r_1 M |\gw| \leq \max \{1, C_1\}  K_1 +  r_1 M |\gw|, \;\; t > T_0 (B).
	\end{split}
\end{equation*}
Here the constant $M > 0$ is shown in \eqref{dse}. Thus we get
\beq \label{uu4}
	\int_{t}^{t+1} \sigma(s)\; ds \leq \frac{r_1 M |\gw| + \max \{1, C_1\} K_1}{\min \{d_1, d_2, d_3\}  \min \{1, C_1\}}  \quad \text{for}\; t > T_0 (B),\; g_0 \in B.
\eeq
Hence we also have
\beq \label{uu5}
	\int_{t}^{t+1} \rho(s)\; ds \leq \eta^4 \left(\frac{8 a^2}{d_1} + \frac{4 \beta^2}{d_2}\right) \left(\frac{r_1 M |\gw| + \max \{1, C_1\} K_1}{\min \{d_1, d_2, d_3\}  \min \{1, C_1\}}\right).
\eeq
Denote by
$$
	N = \eta^4 \left(\frac{8 a^2}{d_1} + \frac{4 \beta^2}{d_2}\right) \left(\frac{r_1 M |\gw| + \max \{1, C_1\} K_1}{\min \{d_1, d_2, d_3\}  \min \{1, C_1\}}\right).
$$
The uniform Gronwall inequality applied to \eqref{uu3} yields
\beq \label{uu6}
	\| (\nb u(t), \nb v(t), \nb w(t))\|^2 \leq C_7\, e^N, \quad \text{for any}. \; \; t \geq T_0(B) +1, \; g_0 \in B,
\eeq
where 
\begin{equation*}
	\begin{split}
	C_7 &\, = \frac{r_1 M |\gw| + \max \{1, C_1\} K_1}{\min \{d_1, d_2, d_3\}  \min \{1, C_1\}} + \max \left\{\frac{4}{d_1} , \frac{2 q^2}{d_3}\right\} K_1 \\[3pt]
	&\, + \eta^4 K_1^2 \left(\frac{8 a^2}{d_1} + \frac{4 \beta^2}{d_2}\right) + \left(\frac{4 J^2}{d_1} + \frac{2 \alpha^2}{d_2} + \frac{2 q^2 c^2}{d_3}\right) |\gw|. 
	\end{split}
\end{equation*}
Finally, we complete the proof of \eqref{ac}: 
$$
	\| (u(t), v(t), w(t))\|_E^2 = \| (u, v, w)\|^2+ \| \nb (u, v, w)\|^2 \leq Q_1 = K_1 + C_7\, e^N
$$
for $t \geq T_1 (B) = T_0 (B) + 1$. The proof is completed.
\end{proof}

We now prove the main result on the existence of a global attractor for the diffusive Hindmarsh-Rose semiflow $\{S(t)\}_{t \geq 0}$.

\begin{theorem}[\textbf{The Existence of Global Attractor}] \label{MTh}
	For any positive parameters $d_1, d_2, d_3, a, b, \alpha, \beta, q, r, J$ and $c \in \mathbb{R}$, there exists a global attractor $\mathscr{A}$ in the space $H = L^2 (\gw, \mathbb{R}^3)$ for the Hindmarsh-Rose semiflow $\{S(t)\}_{t \geq 0}$ generated by the weak solutions of the diffusive Hindmarsh-Rose equations \eqref{pb}. Moreover, the global attractor $\mathscr{A}$ is an $(H, E)$-global attractor.
\end{theorem}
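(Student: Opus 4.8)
The plan is to apply Proposition \ref{L:basic} on the phase space $\ms{X}=H$ to obtain a global attractor in $H$, and then to promote it to an $(H,E)$-global attractor in the sense of Definition \ref{Dxya}. Hypothesis (i) of Proposition \ref{L:basic} is already in hand: by Theorem \ref{Th2} the bounded ball $B_0$ of \eqref{abs} is an absorbing set in $H$ for the Hindmarsh--Rose semiflow $\{S(t)\}_{t\ge 0}$.

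So the only thing to verify for the $H$-attractor is asymptotic compactness of $\{S(t)\}_{t\ge 0}$ on $H$ in the sense of Definition \ref{Dasmp}. Let $\{w_n\}\subset H$ be bounded, say contained in a ball $B$ of $H$, and let $0<t_n\to\infty$ be monotone increasing. By Theorem \ref{Th3} there is a finite time $T_1(B)$ with $\|S(t)w_n\|_E^2\le Q_1$ for all $t\ge T_1(B)$ and all $n$; choosing $N_0$ so that $t_n\ge T_1(B)$ for $n\ge N_0$, the tail $\{S(t_n)w_n\}_{n\ge N_0}$ lies in the $E$-ball of radius $\sqrt{Q_1}$. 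Since $\gw$ is bounded with locally Lipschitz boundary, the Rellich--Kondrachov embedding $E=H^1(\gw,\mathbb{R}^3)\hookrightarrow L^2(\gw,\mathbb{R}^3)=H$ is compact, so some subsequence of $\{S(t_n)w_n\}$ converges in $H$. Hence Proposition \ref{L:basic} applies and produces a global attractor $\mathscr{A}\subset H$, given by \eqref{glafm}, that is nonempty, compact and invariant in $H$ and attracts every bounded subset of $H$ in the $H$-norm.

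For the $(H,E)$ conclusion I would use a \emph{uniform parabolic-smoothing estimate}: for every $R>0$ there is a constant $Q_2=Q_2(R)$ with $\|S(1)g\|_{H^2}\le Q_2$ whenever $\|g\|_E\le R$. By Lemma \ref{Lwn} every weak solution is a strong solution for positive time, and this regularity is made uniform in the data exactly as Theorem \ref{Th3} was proved, one derivative higher: one estimates $\frac{d}{dt}\|\gd(u,v,w)\|^2$, keeps the dissipative cubic contribution coming from $\vp'(u)$, controls the remaining nonlinear and coupling terms by the Young and Gagliardo--Nirenberg inequalities with the help of the $L^4$ and $L^6$ absorbing bounds of Theorem \ref{Th2p} and the $E$-bound \eqref{ac}, and closes with the uniform Gronwall lemma over a unit time interval (the $L^1$-in-time bound of $\|\gd(u,v,w)\|^2$ required by uniform Gronwall coming from integrating \eqref{uu}). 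Granting this, two things follow. First, $\mathscr{A}$ is bounded in $E$ --- apply \eqref{ac} to the $H$-bounded set $\mathscr{A}$ and use invariance --- hence $\mathscr{A}=S(1)\mathscr{A}$ is bounded in $H^2(\gw,\mathbb{R}^3)$; since $H^2(\gw,\mathbb{R}^3)$ is compactly embedded in $E$ and $\mathscr{A}$ is closed in $H$ (so $E$-limits of sequences from $\mathscr{A}$ again lie in $\mathscr{A}$), $\mathscr{A}$ is compact in $E$, which together with invariance gives condition (i) of Definition \ref{Dxya}. Second, given any bounded $B\subset H$, a sequence $x_n\in B$ and $t_n\to\infty$, write $S(t_n)x_n=S(1)\,\bigl[S(t_n-1)x_n\bigr]$; for $n$ large the bracketed term lies in the $E$-ball of radius $\sqrt{Q_1}$ by Theorem \ref{Th3}, so $\{S(t_n)x_n\}$ is eventually bounded in $H^2(\gw,\mathbb{R}^3)$ and hence precompact in $E$. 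If $\mathscr{A}$ failed to attract $B$ in the $E$-norm, we could pick $\ve>0$, $t_n\to\infty$, $x_n\in B$ with $\dist_E(S(t_n)x_n,\mathscr{A})\ge\ve$, pass to an $E$-convergent subsequence $S(t_{n_k})x_{n_k}\to y$, and use that $\mathscr{A}$ attracts $B$ in the $H$-norm to conclude $\dist_H(y,\mathscr{A})=0$, i.e.\ $y\in\mathscr{A}$, contradicting $\dist_E(S(t_{n_k})x_{n_k},\mathscr{A})\ge\ve$. Thus $\mathscr{A}$ satisfies (ii) of Definition \ref{Dxya} as well and is an $(H,E)$-global attractor.

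The soft ingredients --- the absorbing ball, Rellich compactness and the two contradiction arguments --- are immediate. The one genuinely technical point is the uniform $H^2$-smoothing estimate used in the third paragraph: when the equations are differentiated and tested to control the $H^2$-norm of $(u,v,w)$, a term of the type $\int_\gw (2au-3bu^2)\,|\gd u|^2\,dx$ appears, whose cubic part $-3b\int_\gw u^2\,|\gd u|^2\,dx$ is negative definite and must be made to absorb its subcubic part, while the curvature term $\int_\gw \vp''(u)\,|\nb u|^2\,\gd u\,dx$ must be interpolated; this is precisely where the higher-integrability bounds of Theorem \ref{Th2p} do the real work, and in substance it is the computation carried out in Section 4 for the regularity of $\mathscr{A}$.
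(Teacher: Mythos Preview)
Your argument for the global attractor in $H$ is correct and matches the paper exactly: the absorbing ball from Theorem~\ref{Th2}, then asymptotic compactness via the uniform $E$-bound of Theorem~\ref{Th3} and the compact embedding $E\hookrightarrow H$.

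For the $(H,E)$-upgrade you take a genuinely different route. You invoke a uniform $H^2$ smoothing estimate $\|S(1)g\|_{H^2}\le Q_2(R)$ for $\|g\|_E\le R$, and then use the compact embedding $H^2\hookrightarrow E$ to obtain both compactness of $\mathscr A$ in $E$ and $E$-attraction of $H$-bounded sets by a contradiction argument. The paper avoids $H^2$ altogether: it proves asymptotic compactness of the semiflow \emph{in $E$} directly, by writing $S(t_n)g_n=S(T)\,[S(t_n-T)g_n]$, using Theorem~\ref{Th3} and the compact embedding $E\hookrightarrow H$ to extract an $H$-convergent subsequence of the bracket, and then using the parabolic smoothing $S(T):H\to E$ to upgrade convergence to $E$. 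This yields an $E$-global attractor $\mathscr A_E$, which is then identified with $\mathscr A$ by mutual attraction. The paper's route is lighter --- it only needs $H\to E$ smoothing rather than $E\to H^2$ --- and is self-contained at this point of the paper. Your route is correct in principle, but note that Section~4 does \emph{not} supply the uniform $E\to H^2$ estimate you sketch: Theorems~\ref{p1a}--\ref{p2} establish $H^2$-boundedness only \emph{on the attractor}, via an $L^\infty$ bound that relies on the invariance of $\mathscr A$ and a $t^2$-weighted time-derivative trick, not by the $\frac{d}{dt}\|\Delta g\|^2$ energy estimate with the curvature term $\int\vp''(u)|\nb u|^2\Delta u\,dx$ that you outline. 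So your proof as written leans on an estimate that is plausible but not actually provided by the paper in that generality.
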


\begin{proof}
	In Theorem \ref{Th2} it has been proved that there is an absorbing set $B_0 \in H$ for the Hindmarsh-Rose semiflow $\{S(t)\}_{t \geq 0}$. In Theorem \ref{Th3}, it is shown that for any given bounded set $B \subset H$,
$$
	\|S(t) g_0\|^2_E \leq Q_1, \quad \text{for}\; \, t \geq T_1 (B)\; \,\text{and all}\; \, g_0 \in B.
$$
This implies that $\bigcup_{t \geq T_1 (B)} S(t) B$ is a bounded set in $E$, and therefore a precompact set in $H$ due to the compact embedding $E \hookrightarrow H$. 
Therefore, the Hindmarsh-Rose semiflow $\{S(t)\}_{t \geq 0}$ is asymptotically compact in $H$. Since the two conditions in Proposition \eqref{L:basic} are satisfied, we conclude that there exists a global attractor $\mathscr{A}$ in the phase space $H$ for this Hindmarsh-Rose semiflow $\{S(t)\}_{t \geq 0}$ and
\beq \bl{msA}
	 \ms{A} = \bigcap_{\tau \geq 0} \; \overline{\bigcup_{t \geq \tau} (S(t)B_0)}.
\eeq

Next we prove that this global attractor $\ms{A}$ is a bi-space $(H, E)$-global attractor. Actually Theorem \ref{Th3} shows that there is a bounded absorbing set $B_1 \subset E$ with respect to the $E$-norm for the Hindmarsh-Rose semiflow $\{S(t)\}_{t \geq 0}$ on $H$. Indeed,
\beq \bl{B1}
	B_1 = \{g \in E : \|g\|^2_E \leq Q_1\}.
\eeq

Moreover, we can show that the Hindmarsh-Rose semiflow $\{S(t)\}_{t \geq 0}$ is asymptotically compact not only in $H$ but also in the space $E$ with respect to the $E$-norm. Let $T > 0$ be arbitrarily given. For any time sequence $\{t_n\}^{\infty}_{n=1},\; t_n \rightarrow \infty$, and any bounded sequence $\{g_n\} \in E$, there is an integer $n_0 \geq 1$ such that $t_n > T$ for all $n > n_0$. By Theorem \ref{Th3} and the bounded sequence $\{g_n\}$ in $E$, we have
$$
	\{S(t_n - T) g_n\}_{n > n_0} \; \text{is bounded set in $E$}.
$$ 
Since $E$ is a Hilbert space, there exists an increasing subsequence of integers $\{n_i\}^{\infty}_{i=1}$ where $n_i > n_0$ such that the following weak limit exists,
$$
	(w) \lim_{i \to \infty} S(t_{n_i} - T) g_{n_i} = g^* \in E.
$$
Since $E$ is compactly embedded in $H$, we can take subsequence of $\{n_i\}^{\infty}_{i=1}$ and relabel it as the same as $\{n_i\}^{\infty}_{i=1}$, such that the following strong convergence holds,
$$
	(s) \lim_{i \to \infty} S(t_{n_i} - T) g_{n_i} = g^* \in H.
$$
Hence the following strong convergence in $E$ holds,
$$
	\lim_{i \to \infty} S(t_{n_i}) g_{n_i} = \lim_{i \to \infty} S(T) S(t_{n_i} - T) g_{n_i} = S(T) g^* \in E.
$$
This proves that $\{S(t)\}_{t \geq 0}$ is asymptotically compact in $E$. 

Thus, by Proposition \eqref{L:basic}, there exists a global attractor $\mathscr{A}_E$ in $E$ for the semiflow $\{S(t)\}_{t \geq 0}$. Since $\ms{A}_E$ attracts the set $B_1$ in the $E$-norm and, on the other hand, Theorem \ref{Th3} shows that $B_1$ absorbs any bounded subset $B$ of $H$, then the global attractor $\mathscr{A}_E$ attracts any given bounded set $B \subset H$ in $E$-norm. Therefore, $\ms{A}_E$ is an ($H, E$) global attractor. 

Finally, since $\mathscr{A}$ is bounded and invariant in $H$ and in $E$, it holds that
\begin{equation*}
	\begin{split}
	&\mathscr{A}_E \; \text{attracts} \; \mathscr{A} \; \text{in $E$, so that} \; \mathscr{A} \subset \mathscr{A}_E , \\
	&\mathscr{A} \; \text{attracts} \; \mathscr{A}_E \; \text{in $H$, so that} \; \mathscr{A}_E \subset \mathscr{A}.
	\end{split}
\end{equation*}
It concludes that $\mathscr{A}$ = $\mathscr{A}_E$. Therefore, the global attractor $\mathscr{A}$ in $H$ is an ($H, E$) global attractor for the Hindmarsh-Rose semiflow.
\end{proof}

\section{\textbf{Regularity Properties and Structure of the Global Attractor}}	

In this section, we shall prove the regularity properties of the global attractor $\mathscr{A}$ in the spaces $ L^\infty (\gw, \mathbb{R}^3)$ and $H^2 (\gw, \mathbb{R}^3)$. And we shall show that the Hindmarsh-Rose semiflow is a gradient system so that its global attractor turns out to be the union of the unstable manifolds of all the steady states. 
\begin{theorem}\label{p1a}
	The global attractor $\mathscr{A}$ for the Hindmarsh-Rose semiflow $\{S(t)\}_{t \geq 0}$ in the space $H$ is a bounded set in $L^\infty (\gw, \mathbb{R}^3)$. There is a constant $C_\infty > 0$ such that 
	\beq \label{uu7}
	\sup_{g \in \mathscr{A}}\, \|g\|_{L^{\infty}} \leq C_\infty.  
	\eeq
\end{theorem}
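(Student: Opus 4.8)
The plan is to lift the $E$-boundedness of $\ms{A}$ (from \eqref{ac} and Theorem~\ref{MTh}) to $L^\infty$-boundedness by one step of parabolic smoothing of the semiflow. The first observation is that, since $\ms{A}$ is bounded in $E=[H^1(\gw)]^3$ and $f$ in \eqref{opf} is locally Lipschitz from $E$ into $H$, the image $f(\ms{A})$ is bounded in $H=[L^2(\gw)]^3$; explicitly, using $\|\vp(u)\|\le a\|u\|_{L^4}^2+b\|u\|_{L^6}^3$, $\|\psi(u)\|\le\alpha|\gw|^{1/2}+\beta\|u\|_{L^4}^2$ and $H^1(\gw)\hookrightarrow L^6(\gw)$ for $n\le 3$, there is a constant $M_f$ depending only on the parameters, on $|\gw|$ and on $Q_1$ with $\|f(g)\|\le M_f$ for all $g\in\ms{A}$. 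I will also use that $\ms{A}$ is bounded in $H$, say $\|g\|\le R_0$ for $g\in\ms{A}$.

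Fix $g_0=(u_0,v_0,w_0)\in\ms{A}$. By invariance of $\ms{A}$ write $g_0=S(1)g_1$ with $g_1\in\ms{A}$, and note $g(\tau):=S(\tau)g_1\in\ms{A}$ for all $\tau\in[0,1]$, so that $\tau\mapsto f(g(\tau))$ is bounded by $M_f$ in $H$ on $[0,1]$ and, by Lemma~\ref{Lwn}, $g(\cdot)$ is a strong solution there. Writing each component equation in mild form over $[0,1]$, e.g.
\beq
  u_0 \;=\; u(1) \;=\; e^{\,d_1\gd_N}u(0) \;+\; \int_0^1 e^{\,d_1\gd_N(1-\tau)}\big(\vp(u(\tau))+v(\tau)-w(\tau)+J\big)\,d\tau ,
\eeq
with $\gd_N$ the Neumann Laplacian on $\gw$, I would invoke the ultracontractivity estimate $\|e^{\,d_i\gd_N\sigma}\|_{L^2(\gw)\to L^\infty(\gw)}\le C_\ast(1+\sigma^{-n/4})$ for $0<\sigma\le 1$, valid on the bounded Lipschitz domain $\gw$. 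Since $n\le 3$ the exponent $n/4<1$ is integrable on $(0,1)$, so with $\|u(0)\|\le R_0$ and $\|f(g(\tau))\|\le M_f$ one gets $\|u_0\|_{L^\infty}\le C_\ast R_0+C_\ast M_f\!\int_0^1(1+\sigma^{-n/4})\,d\sigma$. The identical computation on the $v$- and $w$-equations bounds $\|v_0\|_{L^\infty}$ and $\|w_0\|_{L^\infty}$ by constants of the same form, all independent of $g_0\in\ms{A}$; taking $C_\infty$ to be the largest of the three (or their $\ell^2$ combination, matching the norm convention on $\mathbb{R}^3$) gives \eqref{uu7}.

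The point requiring care is that the $u$-equation has no zeroth-order damping, so $e^{\,d_1\gd_N t}$ does not decay on the constants; this is why I would propagate over a \emph{fixed} unit time-lag instead of letting the initial time recede to $-\infty$, which keeps the bound uniform while calling only on the small-time smoothing of the Neumann semigroup. The single external ingredient is that $L^2$--$L^\infty$ ultracontractivity, which follows from the Sobolev embedding $H^1(\gw)\hookrightarrow L^{2n/(n-2)}(\gw)$ on Lipschitz domains by the standard Moser iteration for heat semigroups; establishing (or rather citing) this cleanly on a merely Lipschitz $\gw$ is the main, though minor, obstacle.

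An alternative closer to the method of Theorem~\ref{Th2p} is to run an Alikakos--Moser iteration: test \eqref{ueq} with $|u|^{2p-2}u$, use the good term $-b\|u\|_{L^{2p+2}}^{2p+2}$ to absorb both $a\|u\|_{L^{2p+1}}^{2p+1}$ and the coupling $\int_\gw(|v|+|w|+J)|u|^{2p-1}\,dx$ (the latter via Young's inequality, which needs only $v,w\in L^{(2p+2)/3}$ with $(2p+2)/3<2p$), bound $v,w$ from their linear damping together with the bounds on $u$, and iterate along $p\mapsto 2p$ from the $L^6$-bound already in hand. In that route the obstacle is entirely quantitative: controlling the growth of the iterated constants $K_p$ so that $K_p^{1/2p}$ stays bounded as $p\to\infty$, and verifying that the seemingly circular coupling among $u,v,w$ closes because the exponent required of $v,w$ in the $u$-estimate is strictly below the one being estimated.
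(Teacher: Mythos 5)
Your proposal is correct and follows essentially the same route as the paper's proof: both write the solution in mild form over a fixed unit time-lag, use the invariance $\mathscr{A}=S(1)\mathscr{A}$, bound the nonlinearity on $\mathscr{A}$ via its $E$-boundedness from Theorem \ref{Th3}, and apply the $L^2\to L^\infty$ smoothing estimate $\|e^{At}\|_{\mathcal{L}(L^2,L^\infty)}\le c(2)\,t^{-3/4}$ of the analytic Neumann semigroup, whose exponent is integrable on $(0,1)$. The only cosmetic differences are that the paper bounds $f$ through its local Lipschitz constant $L(Q_1)$ on the $E$-ball rather than through explicit Sobolev estimates, and works with the vector semigroup $e^{At}$ rather than componentwise.
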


\begin{proof}
The analytic $C_0$-semigroup $\{e^{A t}\}_{t \geq 0}$ has the regularity property \cite{SY} that $e^{A t}: L^p(\gw) \rightarrow L^{\infty}(\gw)$ for $p \geq 1, t > 0$, and there is a constant $c(p) > 0$ such that
\begin{equation}\label{p1b}
	\|e^{A t}\|_{\mathcal{L} {(L^p, L^\infty)}} \leq c(p)\, t^{-\frac{n}{2p}}, \quad \text{where} \,\, n = \text{dim}\,\, \gw.
\end{equation}
Since any weak solution of \eqref{pb} as defined is a mild solution \cite{SY} and the global attractor $\mathscr{A}$ is an invariant set, for any $g \in \mathscr{A} \subset E$, we have
\begin{equation}\label{p1c}
	\begin{split}
	&\|S(t) g\|_{L^{\infty}} \leq \|e^{A t}\|_{\mathcal{L} {(L^2, L^\infty)}} \|g\| + \int_{0}^{t} \|e^{A (t - \sigma)} \|_{(L^2, L^\infty)} \|f(S(\sigma) g) - f(S(\sigma) 0)\|\, d\sigma\\
        &\leq c(2) t^{-\frac{3}{4}} \|g\| + \int_{0}^{t} c(2) (t - \sigma)^{-\frac{3}{4}} L(Q_1) (\|S(\sigma) g\|_E + \|S(\sigma) 0\|_E)\, d\sigma, \quad t > 0,
	\end{split}
\end{equation}
where $L(Q_1)$ is the Lipschitz constant of the nonlinear map $f$ restricted on the closed, bounded ball centered at the origin with radius $\sqrt{Q_1}$ in $E$. The global attractor $\mathscr{A}$ is invariant so that
	$$
	\{S(t) \mathscr{A} : t \geq 0\} \subset B_0 \, (\subset H) \cap  B_1 \, (\subset E).
	$$
Then from \eqref{p1c} we obtain
\begin{equation} \label{p1d}
	\begin{split}
	&\|S(t) g\|_{L^{\infty}}  \leq c(2) K_1 t^{-\frac{3}{4}} + \int_{0}^{t} c(2) L(Q_1) \left(\sqrt{Q_1} + \sqrt{Q_2}\right)  (t - \sigma)^{-\frac{3}{4}}\, d\sigma  \\
	& \qquad\; \qquad \; = c(2) [K_1 t^{-\frac{3}{4}} + 4 L(Q_1) \left(\sqrt{Q_1} + \sqrt{Q_2}\right) t^{\frac{1}{4}}], \quad \text{for} \,\, 0 < t \leq 1,
	\end{split}
\end{equation}
where  
$$
	Q_2 = \sup_{0 \leq \sigma \leq t \leq 1} \|S(\sigma) 0\|^2_E.
$$
Take $t = 1$ in \eqref{p1d} and get
$$
	\|S(1) g\|_{L^{\infty}} \leq c(2) \left(K_1 + 4 L(Q_1) \left(\sqrt{Q_1} + \sqrt{Q_2}\right)\right), \quad \text{for any} \; g \in \mathscr{A}.
$$
The invariance of $\mathscr{A}$ implies that $S(1) \ms{A} = \ms{A}$. Therefore, the global attractor $\mathscr{A}$ is a bounded subset in $L^{\infty} (\gw)$.
\end{proof}

\begin{theorem}\label{p2}
	The global attractor $\mathscr{A}$ in the space $H$ for the Hindmarsh-Rose semiflow $\{S(t)\}_{t \geq 0}$ is a bounded set in $H^2(\gw, \mathbb{R}^3)$.
\end{theorem}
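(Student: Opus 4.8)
The plan is to push the argument of Theorem \ref{p1a} one order of regularity higher, using the smoothing of the analytic $C_0$-semigroup $\{e^{At}\}_{t\ge0}$ together with the fact that, on the invariant set $\mathscr{A}$, the nonlinear map $f$ is already bounded in $E = H^1(\gw,\mathbb{R}^3) = D((-A)^{1/2})$.

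First I would collect the inputs: $\mathscr{A}$ is invariant, it is bounded in $E$ by Theorem \ref{Th3} ($\|g\|_E^2 \le Q_1$ on $\mathscr{A}$), and it is bounded in $L^\infty(\gw,\mathbb{R}^3)$ by Theorem \ref{p1a}. From the $L^\infty$- and $H^1$-bounds one verifies by the chain rule and H\"older's inequality that $f$ carries $\mathscr{A}$ into a bounded subset of $E$: for instance $\nb(\vp(u)) = (2au - 3bu^2)\,\nb u$ has $L^2$-norm at most $(2a\|u\|_{L^\infty} + 3b\|u\|_{L^\infty}^2)\|\nb u\|$, and likewise $\vp(u)$ itself, $\psi(u) = \ap - \gb u^2$, and the linear coupling terms $v-w+J$, $q(u-c)-rw$ are all controlled by the $L^\infty$- and $H^1$-norms of $(u,v,w)$; hence there is $M_f > 0$ with $\sup_{g\in\mathscr{A}}\|f(g)\|_E \le M_f$. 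I would also recall the standard facts $D(A) = H^2(\gw,\mathbb{R}^3)\cap\{\pdr g/\pdr\nu = 0\}$ and $D((-A)^{1/2}) = E$, and the analytic-semigroup estimate $\|(-A)^{1/2}e^{At}\|_{\mathcal{L}(H)} \le C\,t^{-1/2}$ for $0 < t \le 1$, which together give $\|e^{At}h\|_{H^2} \le C\,t^{-1/2}\|h\|_E$ for $h\in E$ and $0 < t\le 1$ (using that $\{e^{At}\}$ is a contraction semigroup).

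Next, since every weak solution of \eqref{pb} is a mild solution, for $g\in\mathscr{A}$ I would use the representation
\[
    S(t)g = e^{At}g + \int_0^t e^{A(t-\gs)} f(S(\gs)g)\,d\gs, \qquad t>0,
\]
and note that invariance gives $S(\gs)g\in\mathscr{A}$ for all $\gs\ge0$, so $\|f(S(\gs)g)\|_E \le M_f$ uniformly. Taking the $H^2$-norm, applying the smoothing estimate above term by term (note $t-\gs\le t\le1$), and using $\|g\|_E\le\sqrt{Q_1}$ on $\mathscr{A}$, one gets for $0 < t \le 1$
\[
    \|S(t)g\|_{H^2} \le C\,t^{-1/2}\sqrt{Q_1} + C\int_0^t (t-\gs)^{-1/2} M_f\, d\gs = C\,t^{-1/2}\sqrt{Q_1} + 2C\,M_f\,t^{1/2}.
\]
Setting $t = 1$ gives $\|S(1)g\|_{H^2} \le C(\sqrt{Q_1} + 2M_f)$ for every $g\in\mathscr{A}$, and since $S(1)\mathscr{A} = \mathscr{A}$ by invariance, $\sup_{g\in\mathscr{A}}\|g\|_{H^2} = \sup_{g\in\mathscr{A}}\|S(1)g\|_{H^2}$ is finite, which is the assertion.

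The point requiring the most care is the boundedness of $f$ in $E$ on $\mathscr{A}$: this is exactly where the $L^\infty$-bound of Theorem \ref{p1a} is indispensable, since otherwise the product $u^2\,\nb u$ coming from the cubic term $-bu^3$ is not obviously in $L^2$ uniformly over $\mathscr{A}$. An alternative, in the energy-estimate style of Section 2, would be to test \eqref{ueq}--\eqref{weq} against $\gd^2 u,\gd^2 v,\gd^2 w$ (or to apply $\gd$ to the equations and test the result against $-\gd u$, etc.), derive a differential inequality for $\|\gd u\|^2 + \|\gd v\|^2 + \|\gd w\|^2$ whose forcing is controlled by the already-established $L^\infty$- and $H^1$-bounds, observe from \eqref{uu} that $\int_t^{t+1}(\|\gd u\|^2+\|\gd v\|^2+\|\gd w\|^2)\,ds$ is bounded, and then invoke the uniform Gronwall inequality exactly as in the proof of Theorem \ref{Th3}; the nuisance there is keeping track of the boundary terms produced by integration by parts under the Neumann condition \eqref{nbc}. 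I would present the shorter semigroup argument, which parallels Theorem \ref{p1a}.
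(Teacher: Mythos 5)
Your proof is correct, but it takes a genuinely different route from the paper's. The paper proceeds by energy estimates: it first tests the equations against $u_t, v_t, w_t$ (using the $L^\infty$-bound $C_\infty$) to control $\int_0^1\|g_t(s)\|^2\,ds$, then differentiates the system in time and tests against $t^2u_t$, etc., to extract the pointwise bound $\|g_t(1)\|^2\le D$, and finally reads $\|\gd u\|,\|\gd v\|,\|\gd w\|$ directly off the equations, estimating the nonlinearities with the $L^4$- and $L^6$-absorbing constants $K_2,K_3$ of Theorem \ref{Th2p} and invoking invariance $S(1)\mathscr{A}=\mathscr{A}$. You instead bootstrap the mild-solution representation one derivative higher, exactly as in Theorem \ref{p1a}: the single substantive input is that $f$ maps $\mathscr{A}$ into a bounded set of $E$ (which you correctly reduce to the $L^\infty$-bound of Theorem \ref{p1a} plus the $H^1$-bound $Q_1$ of Theorem \ref{Th3}, via the chain rule for $\nb\vp(u)=(2au-3bu^2)\nb u$), after which the smoothing estimate $\|e^{At}h\|_{H^2}\le C\,t^{-1/2}\|h\|_E$ and the integrability of $(t-\gs)^{-1/2}$ close the argument. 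Your route is shorter and structurally parallel to the $L^\infty$-regularity proof; it does rely on the identifications $D((-A)^{1/2})=E$ and $\|g\|_{H^2}\simeq\|g\|+\|Ag\|$ on $D(A)$ (Neumann elliptic regularity), but the paper uses the same norm equivalence in its own Step 3, so nothing extra is assumed. What the paper's longer argument buys in exchange is a by-product your proof does not give: a uniform bound on $\|g_t\|$ along all trajectories in $\mathscr{A}$, which is of independent interest. Both proofs use the invariance $S(1)\mathscr{A}=\mathscr{A}$ in the same final step, and both are complete.
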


\begin{proof} 
Consider the solution trajectories inside the global attractor $\mathscr{A}$.

Step 1. Take the $L^2$ inner-product $\inpt{\eqref{ueq}, u_t}$ to obtain
\begin{equation*}
	\begin{split}
	\|u_t\|^2 +&\, \frac{d_1}{2}\frac{d}{dt}\|\nb u\|^2 = \int_\gw (a u^2 - b u^2 + v - w + J) u_t\, dx\\
	\leq &\, \int_\gw \left(a\, C^2_\infty + b\, C^3_\infty + 2 {C_\infty} + J\right) |u_t|\, dx\\
	= &\, \frac{1}{2} \left(a\, C_\infty^2 + b\, C_\infty^3 + 2 {C_\infty} + J\right)^2 |\gw| + \frac{1}{2} \|u_t\|^2,
	\end{split}	
\end{equation*}
where ${C_\infty}$ is from \eqref{p1a}, for the first component $u(t, x)$ of all the solution trajectories in $\mathscr{A}$. Also take the $L^2$ inner-product $\inpt{\eqref{veq}, v_t}$ to obtain
\begin{equation*}
	\begin{split}
	&\|v_t\|^2 + \frac{d_2}{2} \frac{d}{dt} \|\nb v\|^2 = \int_\gw (\alpha - \beta u^2 - v) v_t\, dx\\
	\leq &\, \int_\gw (\alpha + \beta {C_\infty}^2 + {C_\infty}) |v_t|\, dx = \frac{1}{2} (\alpha + \beta {C_\infty}^2 + {C_\infty})^2 |\gw| + \frac{1}{2}\|v_t\|^2 
	\end{split}
\end{equation*}
for the second component $v(t, x)$ of all the trajectories in $\mathscr{A}$. Then take the $L^2$ inner-product $\inpt{\eqref{weq}, w_t}$ to acquire 
\begin{equation*}
	\begin{split}
	&\|w_t\|^2 + \frac{d_3}{2} \frac{d}{dt} \|\nb w\|^2 = \int_\gw (q u - q c - r w) w_t\, dx\\
	\leq &\, \int_\gw (q {C_\infty} + q |c| + r {C_\infty})|w_t|\, dx = \frac{1}{2} (q {C_\infty} + q |c| + r {C_\infty})^2 |\gw| + \frac{1}{2} \|w_t\|^2
	\end{split}
\end{equation*}
for the third component $w(t, x)$ of all the trajectories in $\mathscr{A}$.
	
Summing up the above three estimates we get
\beq \label{p3}
	\begin{split}
	&\|u_t\|^2 + \|v_t\|^2 + \|w_t\|^2 + \frac{d}{dt} \left\{d_1 \|\nb u\|^2 + d_2 \|\nb v\|^2 + d_3 \|\nb w\|^2\right\} \\[6pt]
	\leq &\, \left((a {C_\infty}^2 + b {C_\infty}^3 + 2 {C_\infty} + J)^2 + (\alpha + \beta {C_\infty}^2 + {C_\infty})^2 + (q {C_\infty} + q |c| + r {C_\infty})^2\right) |\gw|.
	\end{split}
\eeq
Integrating the inequality \eqref{p3} over the time interval $[0,1]$, we obtain
	\beq \label{p4}
	\begin{split}
	&\int_{0}^{1} (\|u_t (s)\|^2 + \|v_t (s)\|^2 + \|w_t (s)\|^2)\, ds\\[3pt]
	\leq &\; d_1 \|\nb u(0)\|^2 + d_2 \|\nb v(0)\|^2 + d_3 \|\nb w(0)\|^2\\[4pt]
	& + (a {C_\infty}^2 + b {C_\infty}^3 + 2 {C_\infty} + J)^2 |\gw| + (\alpha + \beta {C_\infty}^2 + {C_\infty})^2 |\gw| \\[4pt]
	&\, + (q {C_\infty} + q |c| + r {C_\infty})^2 |\gw|\\[3pt]
	\leq &\, (d_1 + d_2 + d_3)Q_1 + (a {C_\infty}^2 + b {C_\infty}^3 + 2 {C_\infty} + J)^2 |\gw| \\[3pt]
	& + (\alpha + \beta {C_\infty}^2 + {C_\infty})^2 |\gw| + (q {C_\infty} + q |c| + r {C_\infty})^2 |\gw|.
	\end{split}
	\eeq
	
Step 2. For the diffusive Hindmarsh-Rose equations confined in the set of the global attractor $\ms{A}$, we can differentiate the equations \eqref{ueq}, \eqref{veq}, and \eqref{weq} to get
\begin{equation} \bl{Bp}
	\begin{split}
	u_{tt} & = d_1 \gd u_t +  2 a u u_t - 3 b u^2 u_t + v_t - w_t,  \\[5pt]
	v_{tt} & = d_2 \gd v_t - 2 \beta u u_t - v_t,  \\[5pt]
	w_{tt} & = d_3 \gd w_t + q u_t - r w_t.
	\end{split}
\end{equation}
Take the inner products $\inpt{\eqref{ueq}, t^2 u_t}, \inpt{\eqref{veq}, t^2 v_t}, \inpt{\eqref{weq}, t^2 w_t}$ and then sum them up,
\begin{equation}\label{p8}
	\begin{split}
	& -t\|u_t\|^2 - t\|v_t\|^2 - t\|w_t\|^2 + \frac{1}{2} \frac{d}{dt} (\|t u_t\|^2 + \|t v_t\|^2 + \|t w_t\|^2) \\[8pt]
	& \qquad + t^2 (d_1 \|\nb u_t\|^2 + d_2 \|\nb v_t\|^2 + d_3 \|\nb w_t\|^2) \\[5pt]
	& = \int_\gw t^2 (2 a u u_t^2 - 3 b u^2 u_t^2 +v_t u_t - w_t u_t - 2\beta u u_t v_t - v_t^2 + q u_t w_t - r w_t^2)\, dx\\
	&\leq\, \int_\gw t^2 \left[2 a {C_\infty} u_t^2 + \frac{1}{2}(v_t^2 + u_t^2) + \frac{1}{2} (w_t^2 + u_t^2) + \beta {C_\infty} (u_t^2 + v_t^2) + \frac{q}{2} (u_t^2 + w_t^2)\right] dx \\
	& = t^2 \left( 2 a {C_\infty} + 1 + \beta {C_\infty} + \frac{q}{2}\right)\|u_t\|^2 + t^2 \left(\frac{1}{2} + \beta {C_\infty}\right)\|v_t\|^2 + t^2 \left(\frac{1}{2} + \frac{q}{2}\right)\|w_t\|^2,
	\end{split}
\end{equation}
where the $u$-component portion is deduced by
\begin{equation}
	\begin{split}
	&\quad -t \|u_t\|^2 + \frac{1}{2} \frac{d}{dt} \|t u_t\|^2  = -t \|u_t\|^2 + \frac{1}{2} \frac{d}{dt} \inpt{t u_t, t u_t}\\[4pt]
	& = -t \|u_t\|^2 + \frac{1}{2} \left(\inpt{\frac{d}{dt} (t u_t), t u_t} + \inpt{t u_t, \frac{d}{dt} (t u_t)}\right) \\
	& = -t \|u_t\|^2 + \inpt{\frac{d}{dt} (t u_t), t u_t} = -t \|u_t\|^2 + \inpt{u_t, t u_t} + \inpt{t u_{tt}, t u_t}\\[6pt]
	& = -t \|u_t\|^2 + t \|u_t\|^2 + \inpt{u_{tt}, t^2 u_t} = \inpt{u_{tt}, t^2 u_t}.
	\end{split}
\end{equation}
Similar derivation goes to the $v$-component and $w$-component portion as well. 
	
Now we integrate the differential inequality \eqref{p8} on $[0, t]$ to obtain
\begin{equation}\label{p9}
	\begin{split}
	&\quad \frac{1}{2} (\|t u_t\|^2 + \|t v_t\|^2 + \|t w_t\|^2)\\
	& \leq \int_{0}^{t} s^2 \left(2 a {C_\infty} + 1 + \beta {C_\infty} +\frac{q}{2}\right)\|u_t(s)\|^2\, ds  \\
	&\quad + \int_{0}^{t} s^2 \left(\frac{1}{2} +\beta {C_\infty}\right)\|v_t(s)\|^2\, ds + \int_{0}^{t} s^2 \left(\frac{1}{2} + \frac{q}{2} \right)\|w_t(s)\|^2\, ds \\
	&\quad + \int_{0}^{t} s (\|u_t(s)\|^2 + \|v_t(s)\|^2 + \|w_t(s)\|^2)\, ds.
	\end{split}
\end{equation}
In the above inequality we can take $t = 1$ and get
\begin{equation} \label{p9}
	\begin{split}
	&\quad \|u_t(1)\|^2 + \|v_t(1)\|^2 + \|w_t(1)\|^2\\[2pt]
	& \leq 2 \int_{0}^{1} \left(2 a {C_\infty} + 1 + \beta {C_\infty} +\frac{q}{2}\right)\|u_t(s)\|^2\, ds \\
	&\quad + 2 \int_{0}^{1} \left(\frac{1}{2} +\beta {C_\infty}\right)\|v_t(s)\|^2\, ds + 2 \int_{0}^{1} \left(\frac{1}{2} + \frac{q}{2} \right)\|w_t(s)\|^2\, ds \\
	&\quad + 2 \int_{0}^{1} (\|u_t(s)\|^2 + \|v_t(s)\|^2 + \|w_t(s)\|^2)\, ds \\
	& \leq 2 \left(2 a {C_\infty} + 5 + 2 \beta {C_\infty} + q \right) \int_{0}^{1} (\|u_t(s)\|^2 + \|v_t(s)\|^2 + \|w_t(s)\|^2)\, ds \leq D
	\end{split}
\end{equation}
where, by the inequality in \eqref{p4} from the Step 1, 
\begin{equation*}
	\begin{split}
	D &\, = 2 \left(2 a {C_\infty} + 5 + 2 \beta {C_\infty} + q \right) \left\{(d_1 + d_2 + d_3)Q_1 \right. \\[5pt]               
	 &\, + (a {C_\infty}^2 + b {C_\infty}^3 + 2 {C_\infty} + J)^2 |\gw|\\[5pt]
	 &\, \left. + (\alpha + \beta {C_\infty}^2 + {C_\infty})^2 |\gw| + (q {C_\infty} + q |c| + r {C_\infty})^2 |\gw| \right\}.
	 \end{split}
\end{equation*}
where $Q_1$ is given in \eqref{ac}.
	
	Step 3.  Since the global attractor $\mathscr{A}$ is an invariant set, for any trajectory $g(t) = (u(t), v(t), w(t)) \in \mathscr{A}$, one has $\tg (t) = g(t-1) \in \mathscr{A}$ such that $g (t)= S(1) \tg (t)$. Then the inequality \eqref{p9} together with the equations \eqref{ueq}, \eqref{veq} and \eqref{weq} implies that
	\begin{equation}\label{p10}
	\begin{split}
	&\quad d_1 \|\gd u(t)\| + d_2 \|\gd v(t)\| + d_3 \|\gd w(t)\|\\[5pt]
	&\leq \|u_t (t)\| + \|v_t (t)\| + \|w_t (t)\| + a \|u^2 (t)\| + b\|u^3 (t) \| + \|v(t)\| + \|w(t)\|  \\[5pt]
	&\quad + \beta \|u^2 (t)| + \|v(t)\| + q \|u(t)\| + r\|w(t)\| + (J + \alpha + q |c|) |\gw|^{\frac{1}{2}}       \\[7pt]
	& = \|\tilde{u}_t(t+1)\| + \|\tilde{v}_t(t+1)\| + \|\tilde{w}_t(t+1)\| + q \|u(t)\| + 2 \|v(t)\|     \\[5pt]
	&\quad + (1 + r)\|w(t)\| + (a + \beta)\|u(t)\|_{L^4}^2 + b \|u(t)\|_{L^6}^3 + (J + \alpha + q |c|) |\gw|^{\frac{1}{2}}  \\[2pt]
	& \leq D^{\frac{1}{2}} + (q + 3 + r) K_1^{\frac{1}{2}} + (a + \beta) K_2^{\frac{1}{2}} + b K_3^{\frac{1}{2}} + (J + \alpha + q |c|)|\gw|^{\frac{1}{2}},
	\end{split}
	\end{equation}
where the positive constants $K_1, K_2, K_3$ are defined in \eqref{abs} of Theorem \ref{Th2} and \eqref{Kp} of Theorem \ref{Th2p}.
	
	Since the Laplacian operator $A_0 = \Delta$ with the Neumann boundary condition \eqref{nbc} is self-adjoint and negative definite modulo constant functions, the Sobolev space norm of any $\varphi \in H^2 (\gw, \mathbb{R}^3)$ is equivalent to $\|\varphi\|^2 + \|\nb {\varphi}\|^2 + \|\gd {\varphi}\|^2$. Therefore, the inequality \eqref{p10} together with Theorem \ref{Th2},  Theorem \ref{Th3}, and Theorem \ref{MTh} shows that the global attractor $\mathscr{A}$ is a bounded set in $H^2 (\gw, \mathbb{R}^3)$.
\end{proof}

\begin{theorem} \label{grd}
	The dynamical system $\{S(t)\}_{t \geq 0}$ generated by the diffusive Hindmarsh-Rose equations \eqref{pb} is a gradient system and the structure of its global attractor $\mathscr{A}$ in $H \cap E$ is given by
\beq \label{str}
	\mathscr{A} = \bigcup_{g \in G} W^u (g)
\eeq	
where $G$ is the set of all the steady states and $W^u (g)$ stands for the unstable manifold associated with the steady state $g$.
\end{theorem}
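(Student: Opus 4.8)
The plan is to exhibit a strict Lyapunov functional for the Hindmarsh--Rose semiflow and then to read off the structure \eqref{str} from the classical theorem on gradient systems that possess a global attractor, cf. \cite{CV, Rb, SY, Tm}. By Theorem~\ref{MTh} the attractor $\mathscr{A}$ is a compact invariant subset of $E$, and by Theorems~\ref{p1a} and~\ref{p2} it is bounded in $L^\infty(\gw,\mathbb{R}^3)$ and in $H^2(\gw,\mathbb{R}^3)$; it therefore suffices to establish the Lyapunov properties along the complete trajectories filling $\mathscr{A}$, where the uniform bound $\|u(t)\|_{L^\infty}\le C_\infty$ and the $H^2$-regularity of Section~4 are available. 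As a candidate I would take the natural energy of the $u$-equation together with quadratic terms penalizing the deviation of $v$ and $w$ from the surfaces to which they are enslaved at an equilibrium,
\begin{multline*}
	\mathcal{L}(u,v,w)=\int_\gw\Big(\tfrac{d_1}{2}|\nb u|^2+\tfrac{\theta_2 d_2}{2}|\nb v|^2+\tfrac{\theta_3 d_3}{2}|\nb w|^2-\Phi(u)-Ju\Big)\,dx\\
	+\int_\gw\Big(\tfrac{\theta_2}{2}\big(v-\psi(u)\big)^2+\tfrac{\theta_3}{2}\big(w-\tfrac{q}{r}(u-c)\big)^2\Big)\,dx,
\end{multline*}
where $\Phi'=\vp$, so that $-\Phi(s)=\tfrac{b}{4}s^4-\tfrac{a}{3}s^3$ is bounded below, and $\theta_2,\theta_3>0$ are weights to be chosen. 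Since $H^1(\gw)\hookrightarrow L^4(\gw)$ for $n\le3$, the functional $\mathcal{L}$ is continuous and bounded below on $E$.

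I would then differentiate $\mathcal{L}(S(t)g)$ along a trajectory, integrate the gradient terms by parts using the Neumann condition \eqref{nbc}, and substitute \eqref{ueq}--\eqref{weq}. The $v$-slaving term cancels against $\theta_2 d_2\|\nb v\|^2$ and leaves $-\theta_2\|v_t\|^2$, the $w$-slaving term behaves likewise up to a term $\theta_3(1-r)\big(w-\tfrac{q}{r}(u-c)\big)w_t$, and the $u$-contribution reduces to $-\|u_t\|^2$ plus a residual cross term of the form $\int_\gw(v-w)u_t\,dx$ coming from the non-variational part of the coupling; altogether one arrives at
\[
	\frac{d}{dt}\mathcal{L}(S(t)g)=-\int_\gw\big(|u_t|^2+\theta_2|v_t|^2+\theta_3|w_t|^2\big)\,dx+\mathcal{R}(t).
\]
The intended mechanism is to take $\theta_2,\theta_3$ small and estimate $\mathcal{R}(t)$ with the Young inequality \eqref{Hld}, using the $L^\infty$- and $H^2$-bounds on $\mathscr{A}$ to absorb $\mathcal{R}(t)$ into the negative quadratic terms, so that
\[
	\frac{d}{dt}\mathcal{L}(S(t)g)\le-\tfrac12\int_\gw\big(|u_t|^2+|v_t|^2+|w_t|^2\big)\,dx\le0
\]
holds along every complete trajectory in $\mathscr{A}$; combined with the lower bound this identifies $\{S(t)\}_{t\ge0}$ as a gradient system. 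Equality in the last inequality on a time interval forces $u_t=v_t=w_t=0$ there, hence the trajectory is a steady state, so $\mathcal{L}$ is a \emph{strict} Lyapunov functional.

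Granting the strict Lyapunov functional, the remainder is routine. The set $G$ of steady states is nonempty --- a gradient semiflow with a global attractor always has an equilibrium in $\mathscr{A}$ (where $\mathcal{L}$ attains its minimum), or one may argue by Leray--Schauder degree for the elliptic system obtained by setting all time derivatives to zero --- and $G\subset\mathscr{A}$, so by Theorem~\ref{p2} it is bounded in $H^2(\gw,\mathbb{R}^3)$. For \eqref{str}, the inclusion $\bigcup_{g\in G}W^u(g)\subset\mathscr{A}$ is immediate, since $\mathscr{A}$ is the union of all bounded complete orbits. Conversely, for $g_0\in\mathscr{A}$ invariance yields a complete orbit $\xi(\cdot)$ through $g_0$ inside $\mathscr{A}$; as $\mathcal{L}$ is nonincreasing and bounded below along $\xi$, $\mathcal{L}(\xi(t))$ converges as $t\to-\infty$, so $\mathcal{L}$ is constant on the nonempty, compact, connected, invariant $\alpha$-limit set $\alpha(\xi)$, whence $\alpha(\xi)\subset G$; the LaSalle invariance principle then gives $\xi(t)\to g_-$ for some $g_-\in G$ as $t\to-\infty$, i.e. $g_0\in W^u(g_-)$. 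Hence $\mathscr{A}=\bigcup_{g\in G}W^u(g)$.

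The decisive point, and the one I expect to be the main obstacle, is the estimate of $\mathcal{R}(t)$. The Hindmarsh--Rose coupling is genuinely non-variational --- the $u$-equation feels $+v-w$ while the $v$- and $w$-equations feel $-\gb u^2$ and $q(u-c)$, a sign pattern admitting no compatible potential, and in fact the spatially homogeneous sector reproduces the ODE model \eqref{HR}, which sustains periodic and chaotic bursting --- so $\mathcal{L}$ is not monotone on all of $H$, and the term $\int_\gw(v-w)u_t\,dx$ is only \emph{bounded}, not small, even on $\mathscr{A}$. Closing the argument therefore requires either augmenting $\mathcal{L}$ with bilinear corrections designed to cancel this residual (at the cost of new cross terms that must themselves be controlled) or a more refined use of the compactness and regularity of $\mathscr{A}$ from Section~4; threading this needle --- choosing $\mathcal{L}$ and the weights $\theta_2,\theta_3$ so that $\mathcal{R}(t)$ is truly absorbed rather than merely bounded --- is where essentially all of the work lies.
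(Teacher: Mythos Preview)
Your approach differs substantially from the paper's. The paper does not build a Lyapunov functional from gradient energies plus slaving penalties; instead it sets
\[
\Gamma(g(t)) = -\left(\tfrac{1}{2}\|\nabla g(t)\|^2 + \int_\Omega F(g(t,x))\,dx\right),\qquad F(g(t,x)) = \int_0^t f(g(s,x))\cdot dg,
\]
the inner integral being the line integral of the nonlinearity $f$ along the solution trajectory in $\mathbb{R}^3$. With this choice one gets $\tfrac{d}{dt}\Gamma(g(t)) = -\langle Ag+f(g),\,g_t\rangle = -\|g_t\|^2$ identically, so no residual $\mathcal{R}(t)$ ever appears and no absorption estimate is needed; the gradient-system conclusion and \eqref{str} are then quoted from \cite[Theorem 10.13]{Rb}.

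Your skepticism is, however, well placed and in fact bears on the paper's argument too. For $\Gamma$ to be a Lyapunov functional in the sense of \cite[Definition 10.11]{Rb} it must be a continuous function on the phase space, not a trajectory-dependent quantity; the line integral $\int f\cdot dg$ defines such a function only if it is path-independent, i.e.\ only if $f$ is a gradient field on $\mathbb{R}^3$. It is not: for instance $\partial f_1/\partial v = 1$ while $\partial f_2/\partial u = \psi'(u) = -2\beta u$. This is exactly the non-variational coupling you isolated. Your observation about the spatially homogeneous sector makes the obstruction concrete: any periodic orbit of the ODE \eqref{HR} --- and such orbits are well documented for the standard parameter values --- yields, under the Neumann condition \eqref{nbc}, a spatially homogeneous periodic orbit of \eqref{pb} lying in $\mathscr{A}$, which a strict Lyapunov functional would forbid. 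So the difficulty you flagged in absorbing $\mathcal{R}(t)$ is genuine, and the paper's device of hiding the residual inside a path-dependent ``potential'' $F$ does not circumvent it.
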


\begin{proof} 
	It suffices to show that \cite[Definition 10.11]{Rb} there exists a continuous Lyapunov functional $\Gamma$ on a positively invariant set $\mathfrak{S}$ with respect to this semiflow, which contains the global attractor $\ms{A}$ in $H$, such that $\frac{d}{dt} \Gamma (S(t)g) \leq 0$ along any solution trajectory in $\mathfrak{S}$ of the evolutionary equation \eqref{pb} and that if $\Gamma (S(\tau)g) = \Gamma (g)$ for some $\tau > 0$, then $g$ is a steady state.
	
	For this system \eqref{pb}, we can construct the following Lyapunov functional on the global attractor $\ms{A}$:
\beq \label{Lyap}
	\Gamma (g(t)) =  -\left(\frac{1}{2}\, \|\nb g(t) \|^2 + \int_\gw F(g(t, x))\, dx \right)
\eeq
where
$$
	F(g(t, x)) = \int_0^t f (g(s, x) )\cdot dg , \quad \gamma (g) \subset  \ms{A}, 
$$
which is the line integral along the trajectory $\gamma (g) \subset \mathbb{R}^3$ over a time interval $[0, t]$. By the $H^2$-regularity of the global attractor $\ms{A}$ shown in Theorem \ref{p2}, for all solution trajectories $g(t) = (u(t), v(t), w(t))$ of the equation \eqref{pb} in $\ms{A}$, we have
$$
	\frac{d}{dt} \,\Gamma (g(t)) = - \left\langle Ag(t), \frac{dg}{dt} \right\rangle - \left\langle f(g(t)), \frac{dg}{dt} \right\rangle = - \|g_t \|^2 \leq 0, \quad t \geq 0.
$$
If $\Gamma (S(\tau)g) = \Gamma (g)$ for some $\tau > 0$, then $\frac{dg}{dt} = 0$ for almost all $t \in [0, \tau]$, which implies that $g(t) \equiv g(0)$ so that $g$ must be a steady state. 

Moreover, we can prove that the functional $\Gamma: \ms{A} (\subset E) \to \mathbb{R}$ is continuous. Therefore, by Theorem 10.13 in \cite{Rb}, $\Gamma (g)$ is a continuous Lyapunov functional and the Hindmarsh-Rose semiflow a gradient system. Consequently \eqref{str} is proved.
\end{proof}

\begin{remark}
	The global attractor $\ms{A}$ for the Hindmarsh-Rose semiflow has a finite fractal dimension, which has also been proved indirectly by the existence of an exponential attractor for this semiflow in the space $H$. The latter by definition is a set of finite fractal dimension in $H$ and the global attractor must be a subset of any exponential attractor. That proof has been made in a separate paper by the first two authors but not included here in this paper.
\end{remark}

\section{\textbf{Global Attractors for Partly Diffusive Hindmarsh-Rose Equations}}

\vspace{3pt}
In neuronal dynamics, the partly diffusive models \eqref{pHR} or \eqref{qHR} is more commonly interesting, since the ions currents represented by the variables $v(t,x)$ and $w(t,x)$ may or may not diffuse. 

In this section we shall prove the existence of a global attractor for the partly diffusive Hindmarsh-Rose equations \eqref{pHR} and \eqref{qHR}, respectively. Note that the partly diffusive system \eqref{pHR} can be formulated into the evolutionary equation:
\begin{equation} \label{ppb}
 	\begin{split}
   	& \frac{\partial g}{\partial t} = \Hat{A} g + \Hat{f} (g), \quad t > 0, \\[2pt]
    	g &\, (0) = g_0 = (u_0, v_0, w_0) \in H.
	\end{split}
\end{equation}
Here the nonnegative self-adjoint operator
\begin{equation} \label{opAh}
        \Hat{A} =
        \begin{pmatrix}
            d_1 \gd  & 0   & 0 \\[3pt]
            0 & - I   & 0 \\[3pt]
            0 & 0 & - r I 
        \end{pmatrix}
        : D(\Hat{A}) \rightarrow H,
\end{equation}
where $D(\Hat{A}) = \{g \in H^2(\gw) \times L^2 (\gw, \mathbb{R}^2): \pdr g /\pdr \nu = 0 \}$, and
\begin{equation} \label{opfh}
    \Hat{f} (u,v, w) =
        \begin{pmatrix}
             \vp (u) + v - w + J \\[4pt]
            \psi (u),  \\[4pt]
	     q (u - c)
        \end{pmatrix}
        : H^1 (\gw) \times L^2(\gw, \mathbb{R}^2) \longrightarrow H.
\end{equation}

Another partly diffusive system \eqref{qHR} can be formulated into the evolutionary equation:
\begin{equation} \label{ppb}
 	\begin{split}
   	& \frac{\partial g}{\partial t} = \widetilde{A} g + \widetilde{f} (g), \quad t > 0, \\[2pt]
    	g &\, (0) = g_0 = (u_0, v_0, w_0) \in H.
	\end{split}
\end{equation}
Here the nonnegative self-adjoint operator
\begin{equation} \label{opAh}
        \widetilde{A} =
        \begin{pmatrix}
            d_1 \gd  & 0   & 0 \\[3pt]
            0 &  d_2 \gd   & 0 \\[3pt]
            0 & 0 & - r I 
        \end{pmatrix}
        : D(\widetilde{A}) \rightarrow H,
\end{equation}
where $D(\widetilde{A}) = \{g \in H^2(\gw, \mathbb{R}^2) \times L^2 (\gw): \pdr g /\pdr \nu = 0 \}$, and
\begin{equation} \label{opfh}
    \widetilde{f} (u,v, w) =
        \begin{pmatrix}
             \vp (u) + v - w + J \\[4pt]
            \psi (u) - v,  \\[4pt]
	     q (u - c)
        \end{pmatrix}
        : H^1 (\gw, \mathbb{R}^2) \times L^2(\gw) \longrightarrow H.
\end{equation}

Below is the Kolmogorov-Riesz compactness Theorem shown in \cite[Theorem 5]{HO}.

\begin{lemma} \bl{KRTh}
	Let $1 \leq p < \infty$ and $\gw$ be a bounded domain with locally Lipschitz boundary in $\mathbb{R}^n$. A subset $\mathcal{F}$ in $L^p (\gw)$ is precompact if and only if the following two conditions are satisfied\textup{:}

	\textup{1)} $\mathcal{F}$ is a bounded set in $L^p (\gw)$.
	
	\textup{2)} For every $\ve > 0$, there is some $\eta > 0$ such that, for all $f \in \mathcal{F}$ and $y \in \mathbb{R}^n$ with $| y | < \eta$,
$$
	\int_\gw | f(x + y) - f(x)|^p \, dx < \ve^p.
$$
It is a convention that $f(x) = 0$ for $x \in \mathbb{R}^n \backslash \gw$.
\end{lemma}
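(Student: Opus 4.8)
The plan is to prove the two implications of the equivalence separately, using that $L^p(\gw)$ is complete so that ``precompact'' may be replaced by ``totally bounded,'' and taking mollification together with the Arzel\`a--Ascoli theorem as the main tool.

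For the necessity of 1)--2), I would note first that a precompact subset of a metric space is bounded, which is 1). For 2), fix $\ve>0$, choose a finite $(\ve/3)$-net $f_1,\dots,f_N\in L^p(\gw)$ for $\mathcal{F}$ (extended by zero to $\mathbb{R}^n$), and use the classical continuity of translation in $L^p(\mathbb{R}^n)$ --- proved via density of $C_c(\mathbb{R}^n)$ --- to produce a single $\eta>0$ with $\int_\gw|f_i(x+y)-f_i(x)|^p\,dx<(\ve/3)^p$ for every $i$ whenever $|y|<\eta$. For an arbitrary $f\in\mathcal{F}$, pick $i$ with $\|f-f_i\|_{L^p(\gw)}<\ve/3$; since Lebesgue measure is translation invariant, $\big(\int_\gw|f(x+y)-f_i(x+y)|^p\,dx\big)^{1/p}\le\|f-f_i\|_{L^p(\mathbb{R}^n)}$, and the triangle inequality in $L^p$ gives $\int_\gw|f(x+y)-f(x)|^p\,dx<\ve^p$ for $|y|<\eta$.

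For the sufficiency, assume 1)--2), fix a standard mollifier $\rho_\delta\ge 0$ supported in $\{|y|\le\delta\}$ with unit integral, and for $f\in\mathcal{F}$ extended by zero set $f_\delta=\rho_\delta*f$. Minkowski's integral inequality, together with condition 2) applied to $-y$, gives $\|f_\delta-f\|_{L^p(\gw)}\le\sup_{|y|\le\delta}\big(\int_\gw|f(x-y)-f(x)|^p\,dx\big)^{1/p}$, which tends to $0$ as $\delta\to0$ \emph{uniformly over} $\mathcal{F}$ because condition 2) supplies one $\eta$ for the whole family. On the other hand, for each fixed $\delta$, H\"older's inequality bounds $|f_\delta(x)|\le\|f\|_{L^p}\|\rho_\delta\|_{L^{p'}}$ and $|f_\delta(x)-f_\delta(x')|\le\|f\|_{L^p}\|\rho_\delta(\cdot-x)-\rho_\delta(\cdot-x')\|_{L^{p'}}$, so by 1) and the uniform continuity of $\rho_\delta$ the family $\{f_\delta:f\in\mathcal{F}\}$ is uniformly bounded and equicontinuous on $\overline{\gw}$; since $\gw$ is bounded, Arzel\`a--Ascoli makes it precompact in $C(\overline{\gw})$, hence in $L^p(\gw)$. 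Given $\ve>0$, I would then choose $\delta$ with $\|f_\delta-f\|_{L^p(\gw)}<\ve/2$ for all $f\in\mathcal{F}$, take a finite $(\ve/2)$-net for $\{f_\delta:f\in\mathcal{F}\}$, and combine to obtain a finite $\ve$-net for $\mathcal{F}$; thus $\mathcal{F}$ is totally bounded and therefore precompact.

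The step requiring the most care is the uniform-in-$\mathcal{F}$ convergence $f_\delta\to f$: it is essential that condition 2) provides a single modulus $\eta=\eta(\ve)$ valid for the entire family, and that the difference quotient there is integrated over $\gw$ (with the zero-convention on the translate), which is exactly what makes the Minkowski estimate close without any extra control near $\partial\gw$. The remaining arguments are routine; since this is the classical Kolmogorov--Riesz theorem, one could alternatively cite \cite{HO} verbatim in place of carrying them out.
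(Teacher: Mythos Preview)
Your proof is correct and follows the standard mollification/Arzel\`a--Ascoli route for the Kolmogorov--Riesz theorem. The paper itself does not supply a proof of this lemma at all: it merely states the result and cites \cite[Theorem~5]{HO}, so there is no in-paper argument to compare against. Your final remark that one could simply cite \cite{HO} is in fact exactly what the authors do.
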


\begin{theorem} \bl{pHRA}
	There exists a global attractor $\mathscr{A}_1$ in the space $H = L^2 (\gw, \mathbb{R}^3)$ for the semiflow generated by the solutions of the partly diffusive Hindmarsh-Rose equations \eqref{pHR}. 
\end{theorem}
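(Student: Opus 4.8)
The plan is to verify the two hypotheses of Proposition \ref{L:basic} for the semiflow of \eqref{pHR} on $H=L^2(\gw,\mathbb{R}^3)$. The bounded absorbing ball $B_0\subset H$ is already provided by Corollary \ref{cor1}, so the whole issue is asymptotic compactness, and the obstruction is structural: the $v$- and $w$-components carry no diffusion, so the compact embedding $E\hookrightarrow H$ exploited in Section 3 is unavailable for them. Instead I would show that, for every sufficiently large $T$, the set $\mathcal{F}=\bigcup_{t\ge T}S(t)B_0$ is precompact in $H$ by checking the two conditions of the Kolmogorov--Riesz Lemma \ref{KRTh}: boundedness in $L^2(\gw,\mathbb{R}^3)$ (immediate from Corollary \ref{cor1}), and equismallness of spatial translations, namely $\sup_{g\in\mathcal{F}}\int_\gw|g(x+y)-g(x)|^2\,dx\to 0$ as $|y|\to 0$, with the convention $g\equiv 0$ outside $\gw$.

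The $u$-component is handled by parabolic regularity, exactly as in Theorem \ref{Th3} but for the single equation. Testing the first equation of \eqref{pHR} with $-\gd u$, using that $\vp'(u)=2au-3bu^2$ (the cubic part being favorable and the quadratic part absorbed by Young's inequality), bounding $\|v\|,\|w\|$ by Corollary \ref{cor1}, writing $\|u\|_{L^4}^4\le 2\eta^4(\|u\|^4+\|\nb u\|^4)$ via $H^1(\gw)\hookrightarrow L^4(\gw)$, and using that $\int_t^{t+1}\|\nb u(s)\|^2\,ds$ is uniformly bounded (integrate \eqref{pE2}), the uniform Gronwall inequality gives a constant $Q$ and a time $T_u$ with $\|u(t)\|_{H^1}\le Q$ for all $t\ge T_u$ and $g_0\in B_0$. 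Hence $\{u(t):t\ge T_u,\ g_0\in B_0\}$ is bounded in $H^1(\gw)$, therefore precompact in $L^2(\gw)$ and uniformly bounded in $L^6(\gw)$; by the ``only if'' part of Lemma \ref{KRTh}, its $L^2$-translations are equismall. (Alternatively, the uniform $L^4$- and $L^6$-bounds on $u$ follow from the analogue of Theorem \ref{Th2p} for \eqref{pHR}, whose proof is in fact simpler because no Gagliardo--Nirenberg step is needed once $\nb v,\nb w$ are absent.)

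For the $v$- and $w$-components I would use that at each point they obey linear scalar ODEs with exponential damping. From $v_t=\psi(u)-v$ one has $v(t)=e^{-t}v_0+\int_0^t e^{-(t-s)}\psi(u(s))\,ds$; writing the same identity for the zero-extended spatial translate $v(\cdot+y)$ (so that $\psi(u)^y$ denotes $\psi(u)$ formed on $\gw$, then shifted and extended by zero) and subtracting gives $v^y(t)-v(t)=e^{-t}(v_0^y-v_0)+\int_0^t e^{-(t-s)}\big(\psi(u)^y(s)-\psi(u)(s)\big)\,ds$, and likewise for $w$ with $e^{-rt}$ and the affine term $q(u-c)$ in place of $e^{-t}$ and $\psi(u)$. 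Since $\psi(u)=\ap-\gb u^2$, on $\gw\cap(\gw-y)$ one bounds $\|\psi(u^y)-\psi(u)\|\le\gb\|u^y-u\|_{L^4}\|u^y+u\|_{L^4}$ with $\|u^y+u\|_{L^4}\le 2\|u(s)\|_{L^4}$ uniformly bounded, and interpolates $\|u^y-u\|_{L^4}\le\|u^y-u\|^{1/4}\|u^y-u\|_{L^6}^{3/4}\le C|y|^{1/4}$ for $s\ge T_u$ (the standard $L^2$-translation estimate for $H^1$ functions on a bounded Lipschitz domain together with the uniform $L^6$-bound), while over the boundary strip $S_y=\gw\setminus(\gw-y)$ one has $\int_{S_y}|\psi(u)|^2\le 2\ap^2|S_y|+2\gb^2|S_y|^{1/3}\|u\|_{L^6}^4\to0$ uniformly because $|S_y|\le C|y|\to0$. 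Splitting the time integral at $T_u$: the part over $[0,T_u]$ is dominated by $e^{-(t-T_u)}\int_0^{T_u}(\|\psi(u)^y(s)\|+\|\psi(u)(s)\|)\,ds$, which tends to $0$ as $t\to\infty$ since $\int_0^{T_u}\|\psi(u)(s)\|\,ds\le \ap|\gw|^{1/2}T_u+\gb\int_0^{T_u}\|u(s)\|_{L^4}^2\,ds<\infty$ by the regularity in Lemma \ref{Lwn}; the part over $[T_u,t]$ is at most a modulus $\omega(|y|)$ with $\omega(|y|)\to0$ as $|y|\to0$. Thus $\limsup_{t\to\infty}\sup_{g_0\in B_0}\|v^y(t)-v(t)\|\le\omega(|y|)$, and similarly for $w$ (even more directly, as $q(u-c)$ is affine in $u$). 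Combining the three components, $\mathcal{F}$ satisfies both hypotheses of Lemma \ref{KRTh}, hence is precompact in $H$; consequently, for any bounded $\{g_n\}\subset H$ and any $t_n\to\infty$, the trajectories $S(t_n)g_n$ eventually enter the precompact set $\mathcal{F}$, which gives the asymptotic compactness required, and Proposition \ref{L:basic} then yields the global attractor $\mathscr{A}_1$ in $H$.

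I expect the main obstacle to be the equismallness-of-translations condition for the non-diffusive variables: keeping the coupling term $\psi(u^y)-\psi(u)$ uniformly small forces one through the $H^1$- (hence $L^4$- and $L^6$-) control of $u$, and, more delicately, the boundary strip $S_y$ must be controlled, which is exactly where the higher integrability of $u$ supplied by Theorems \ref{Th2p} and \ref{Th3} is indispensable --- mere $L^2$-boundedness of $v$ and $w$ would not rule out concentration of mass near $\pdr\gw$ as $|y|\to0$.
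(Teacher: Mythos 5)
Your proposal is correct and shares the paper's overall architecture: dissipativity from Corollary \ref{cor1}, and asymptotic compactness by splitting the system into the diffusive component $u$ and the non-diffusive components $v,w$, the latter handled through their pointwise variation-of-constants formulas and the translation criterion of the Kolmogorov--Riesz Lemma \ref{KRTh}, with the translation-smallness of $v$ and $w$ inherited from that of $u$ through the Duhamel integral. The genuine difference is in how $u$ is controlled. The paper deliberately avoids second-order energy estimates: it uses the $L^2\to L^4$ smoothing of $e^{d_1\gd t}$ to bound $u(1)$ in $L^4$ uniformly over a bounded set of data, bootstraps through the trajectory-wise $L^{2p}$ estimates of Theorem \ref{Th2p} to get a uniform $L^4$ bound and a time-averaged $L^6$ bound, and measures the translation of $u$ in $L^3$, estimating $\psi(u^y)-\psi(u)$ by $\|u^y-u\|_{L^3}^2\,\|u\|_{L^6}^2$. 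You instead run the uniform Gronwall argument of Theorem \ref{Th3} on the $u$-equation alone --- which does go through for \eqref{pHR}, since only the $L^2$ bounds on $v,w$ from Corollary \ref{cor1} and the bound on $\int_t^{t+1}\|\nb u\|^2\,ds$ from \eqref{pE2} are needed --- obtaining a uniform $H^1$ bound on $u$, hence genuine $L^2$-precompactness of the $u$-sections by Rellich, a pointwise-in-time $L^6$ bound, and an explicit modulus $|y|^{1/4}$ for $\|u^y-u\|_{L^4}$ by interpolation between $L^2$ and $L^6$. Your route buys a cleaner compactness statement for $u$ (uniform boundedness in $L^4$, which is what the paper's Step~1 literally produces, does not by itself imply precompactness in $L^2$; the compactness there really rests on the $H^1$ information contained in \eqref{nubd}), and your explicit control of the boundary strip $S_y$ makes the zero-extension convention in Lemma \ref{KRTh} honest, where the paper leaves it implicit. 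The one caveat, shared equally by the paper, is that the decaying transient terms such as $e^{-t}\|v_0\|$ force the starting time of the ``precompact tail'' to depend on $\ve$; strictly one obtains total boundedness of the tails for each $\ve$ and concludes asymptotic compactness by a diagonal extraction, which is routine and does not affect the result.
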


 \begin{proof}
 	Since Corollary \ref{cor1} has proved that there exists an absorbing set for each of the partly diffusive Hindmarsh-Rose system \eqref{pHR} and \eqref{qHR}, it suffices to show that the semiflows generated by these two systems are asymptotically compact via an approach different from Theorem \ref{Th3}, but by means of Theorem \ref{Th2p} and Lemma \ref{KRTh}.
	
	The Laplacian operator $d_1 \gd$ with the Neumann boundary condition generates a parabolic semigroup $e^{d_1 \gd t}, t \geq 0$. The $u$-component of the solutions to \eqref{pHR} and to \eqref{qHR} is expressed by
$$
	u(t) = e^{d_1 \gd t} u_0 + \int_0^t e^{d_1 \gd (t - s)} (\vp (u) + v - w + J)\, ds, \quad t \geq 0.
$$
For $1 \leq p < q$, the $L^p \to L^q$ regularity of parabolic semigroup \cite[Theorem 38.10]{SY} indicates that, for space dimension $n \leq 3$,
\beq \bl{pq}
	\|e^{d_1 \gd t} u_0 \|_{L^q} \leq c(p, q) \, t^{-\frac{3}{2}(\frac{1}{p} - \frac{1}{q})} \|u_0\|_{L^p}, \quad t > 0.
\eeq	

	Step 1. Let $p = 2$ and $q=4$ in \eqref{pq}. We have
$$
	\|e^{d_1 \gd t} u_0 \|_{L^4} \leq \widetilde{c} \, t^{-\frac{3}{8}} \|u_0\|_{L^2}, \quad t > 0.
$$		
where $\widetilde{c}$ is a constant. Then we see
\beq \bl{L24}
	\|u(t)\|_{L^4} = \frac{\widetilde{c}}{t^{3/8}} \|u_0\| + \int_0^t  \| e^{d_1 \gd (T-s)} (\vp (u(s)) + v(s) - w(s) + J)\|_{L^4}\, ds,
\eeq
where $\vp (u) = au^2 - bu^3$. From \eqref{E2}, for the partly diffusive Hndmarsh-Rose equations \eqref{pHR}, we have
\begin{equation*}
	\begin{split}
	&\frac{d}{dt} (C_1 \|u(t)\|^2 + \|v(t)\|^2 + \|w(t)\|^2) + d_1 C_1 \|\nb u(t)\|^2 \\
	+ &\, (C_1 \|u \|^2 + \|v \|^2 + r \|w \|^2) \leq (2C_2 + C_1^2)| \gw|, \;\; t \geq 0.
	\end{split}
\end{equation*}
Set $d_* = \min \{d_1, 1\}$. Then it holds that
\beq \bl{E2p}
	\begin{split}
	\frac{d}{dt} (C_1 \|u(t)\|^2 &\, + \|v(t)\|^2 + \|w(t)\|^2) + d_* C_1 (\|\nb u(t)\|^2 + \|u \|^2)   \\
	+ &\, (\|v \|^2 + r \|w \|^2) \leq (2C_2 + C_1^2)| \gw|, \quad t \geq 0.
	\end{split}
\eeq
Integrate \eqref{E2p} over the time interval $[0, t]$ to get
\beq \bl{nubd}
	 \int_0^t (d_* C_1 \|u\|^2_{H^1(\gw)} + \|v \|^2 + r \|w \|^2)\, ds \leq \max \{C_1, 1\} \|g_0\|^2 + t (2C_2 + C_1^2)| \gw|, \; t \geq 0.
\eeq
Since $H^1(\gw)$ is continuously embedded in $L^4 (\gw)$ and $L^6 (\gw)$, and the $C_0$-semigroup $e^{d_1 \gd t}$ is a contraction semigroup both on $L^2(\gw)$ and $H^1(\gw)$ so that 
$$
	\|e^{d_1 \gd (t-s)}\|_{\mathcal{L}(L^2)} \leq 1 \quad \text{and} \quad \| e^{d_1 \gd (t-s)}\|_{\mathcal{L}(H^1)} \leq 1,
$$	
it follows from \eqref{L24} and \eqref{nubd} that
\beq \bl{Mds}
	\begin{split}
	&\|u(t)\|_{L^4} \leq \frac{\widetilde{c}}{t^{3/8}} \|u_0\| + \int_0^t \|e^{d_1 \gd (t-s)}\|_{\mathcal{L}(L^2)} \|au^2 -bu^3 + v - w + J \|\, ds \\
	\leq &\, \widetilde{C} \left(\frac{1}{t^{3/8}} \|u_0\| + \int_0^t  (\|au^2 -bu^3 + v - w\|^2 + 1)\, ds + J t |\gw|^{1/2} \right)  \\
	= &\, \widetilde{C} \left(\frac{1}{t^{3/8}} \|u_0\| +  \int_0^t (d_* C_1 \|u\|^2_{H^1(\gw)} + \|v \|^2 + r \|w \|^2)\, ds + t + J t |\gw|^{1/2} \right)  \\
	\leq &\, \widetilde{C} \left(\frac{1}{t^{3/8}} \|g_0\| + \max \{C_1, 1\} \|g_0\|^2 + t (2C_2 + C_1^2)| \gw| + t + J t |\gw|^{1/2} \right),  \; t > 0,
	\end{split}
\eeq
where $\widetilde{C} > 0$ is a constant. 
Take $t = 1$ in \eqref{Mds} and we can confirm that for any given bounded set $B \subset H$ and any initial state $g_0 \in B$, the $u$-component function at time $t=1$ of all these solutions $g(t, g_0)$ are uniformly bounded in the space $L^4 (\gw)$,
\beq \bl{ut1}
	\|u(1)\|_{L^4} \leq \widetilde{C} \left(\interleave B \interleave + \max \{C_1, 1\}\interleave B \interleave^2 + (2C_2 + C_1^2)| \gw| + 1 + J|\gw|^{1/2} \right)
\eeq 
where $\interleave B \interleave = \sup_{g_0 \in B} \|g_0\|$.

The uniform boundedness \eqref{ut1} allows us to use and adapt the trajectory-wise estimates in Theorem \ref{Th2p}. From \eqref{u2}, \eqref{2p3} and the fact that $\frac{1}{3}(2p + 2) = 2$ for $p=2$, we can improve the inequality \eqref{u3} for this system \eqref{pHR} and get

\beq \bl{pu} 
	\begin{split}
	&\frac{1}{4} \, \frac{d}{dt} \|u(t) \|^4_{L^4} + 3d_1 \|u \nb u \|^2 = \int_\gw [a u^5 - b u^6 + u^3 (v - w + J)]\, dx \\
	\leq &\, C_b (a^6 + C_r + J^2) |\gw| - \frac{3}{4} \int_\gw b u^6\, dx + \frac{1}{4} \int_\gw (v^2(t, x) + r w^2(t, x))\, dx \\
	\leq &\, C_b (a^6 + C_r + J^2) |\gw| + \|v(t)\|^2 + r \|w(t)\|^2  - \frac{3}{4} \int_\gw b u^6\, dx   \\
	\leq  &\, C_b (a^6 + C_r + J^2) |\gw| + K_1 - \frac{3}{4} \int_\gw b u^6\, dx,  \quad t \geq T_B + 1,
	\end{split}
\eeq
because Corollary \ref{cor1} shows that for any bounded set $B \subset H$ there is a time $T_B > 0$ such that for $t \geq T_B$, $\|v(t)\|^2 + \|w(t)\|^2$ is uniformly bounded by $K_1$ given in \eqref{lsp2}. 

Note that $ \frac{1}{4} \int_\gw bu^6 dx \geq \frac{1}{4} \int_\gw u^4 dx - \frac{1}{4b} |\gw |$. Then from \eqref{pu} we obtain
\beq \bl{puc}
	 \frac{d}{dt} \|u(t) \|^4_{L^4} + \|u(t) \|^4_{L^4} + 2b \int_\gw u^6\, dx \leq K, \quad t \geq T_B + 1,
\eeq
where 
$$
	K = \left( 4C_b (a^6 + C_r + J^2) + \frac{1}{b} \right) |\gw| + 4 K_1.
$$	
Apply Gronwall inequality to \eqref{puc} without $2b \int_\gw u^6\, dx$ and use \eqref{ut1} to get 
\beq \bl{u4bd}
	\begin{split}
	&\|u(t) \|^4_{L^4} \leq e^{- (t-1)} \|u(1)\|^4_{L^4} + K   \\[4pt]
	\leq &\, \widetilde{C}^4 (\interleave B \interleave + \max \{C_1, 1\} \interleave B \interleave^2 + (2C_2 + C_1^2)| \gw| + 1 + J|\gw|^{1/2})^4 + K
	\end{split}
\eeq
for $t \geq T_B + 1$ and any $g_0 \in B$. Moreover, integrating \eqref{puc} yields the following important bound to be used a little later: For $t \geq T_B + 1$,
\beq \bl{u6bd}
	\begin{split}
	& \int_{T_B +1}^t e^{-(t-s)} \int_\gw u^6 (s, x)\, dx\, ds  \leq \frac{1}{2b} \left( \|u(T_B +1)\|^4_{L^4} + K \right)      \\
	\leq \frac{1}{2b} & \left(\widetilde{C}^4 (\interleave B \interleave + \max \{C_1, 1\} \interleave B \interleave^2 + (2C_2 + C_1^2)| \gw| + 1 + J|\gw|^{1/2})^4 + 2K\right). 
	 \end{split}
\eeq
The inequality \eqref{u4bd} shows that, for any given bounded set $B \subset H$, 
$$
	\bigcup_{t \geq T_B + 1} \left(\bigcup_{g_0 \in B} u(t, \cdot)\right) \;\; \text{is a bounded set in} \; L^4 (\gw)
$$
so that, by the compact embedding $L^4 (\gw) \hookrightarrow L^2 (\gw)$ and $L^4 (\gw) \hookrightarrow L^3 (\gw)$,
\beq \bl{ucmp}
	\bigcup_{t\, \geq T_B + 1} \left(\bigcup_{g_0 \in B} u(t, \cdot)\right) \;\; \text{is a precompact set in} \; L^2 (\gw) \; \text{and in} \; L^3 (\gw).
\eeq

	Step 2. It remains to prove the precompactness of the two component functions 
$v(t, x)$ and $w(t, x)$, which satisfy the ordinary differential equations in \eqref{pHR}. By the variation-of-constant formula for the solutions of ODE, we have
\begin{equation} \bl{vcw}
	\begin{split}
	v(t, x) &= e^{-t} v_0 (x) + \int_0^t e^{-(t-s)} (\alpha - \beta u^2)\, ds \leq \alpha + e^{-t} v_0 - \beta \int_0^t e^{-(t-s)} u^2(s, x)\, ds, \\
	w(t, x) &= e^{- rt} w_0 (x) + \int_0^t e^{-r (t-s)} q (u - c)\, ds \leq \frac{q |c|}{r} + e^{- rt} w_0 + q \int_0^t e^{-r (t-s)} u(s, x)\, ds.
	\end{split}
\end{equation}
By Lemma \ref{KRTh} and \eqref{ucmp}, for any $\ve > 0$, there is some $\eta > 0$ such that, for any given bounded set $B \subset H$ and all $g_0 \in B$, and for $y \in \mathbb{R}^3$ with $| y | < \eta$, it holds that 
\beq \bl{utx}
	\int_\gw | u(t, x + y) - u(t, x)|^3 \, dx < \ve^3, \quad \text{for all} \;\, t \geq T_B + 1.
\eeq
Consider \eqref{vcw} on the time interval $[T_B + 1, \infty)$. Using the H\"{o}lder inequality we can infer that, for any $t \geq T_B +1$ and any $g_0 \in B$,
\begin{equation} \bl{kyi}
	\begin{split}
	&\int_\gw |v(t, x+y) - v(t, x)|^2 dx = e^{-(t - T_B -1)} \int_\gw |v(T_B + 1, x + y) - v(T_B + 1, x)|^2 \,dx \\
	+ &\, \beta \int_{T_B + 1}^t e^{-(t-s)} \int_\gw |u^2(s, x+ y) - u^2 (s, x)|^2 \, dx\, ds  \leq 2\, e^{-(t-T_B -1)} \|v (T_B +1)\|^2    \\
        + &\, \beta \int_{T_B + 1}^t e^{-(t-s)} \int_\gw |u(s, x+ y) - u (s, x)|^2 |u(s, x+ y) + u (s, x)|^2 \,dx\, ds    \\[2pt]
        \leq &\, 2\, e^{-(t-T_B -1)} \|v (T_B +1)\|^2  \\
        + &\, \beta \int_{T_B + 1}^t e^{-(t-s)} \|(u(s, x+y) - u(s, x))^2\|_{L^{3/2}} \|(u(s, x+y) + u(s,x))^2\|_{L^3} \,ds   \\[2pt]
        \leq &\, 2\, e^{-(t-T_B -1)} \|v (T_B +1)\|^2  \\
        + &\, 24 \beta \int_{T_B + 1}^t e^{-(t-s)} \|u(s, x+y) - u(s, x)\|^{2}_{L^3} \left(\|u(s, x+y)\|_{L^6}^{2} + \|u(s, x)\|_{L^6}^{2}\right)\, ds \\
       \leq &\, 2\, e^{-(t-T_B -1)} \|v (T_B +1)\|^2  \\
       + &\, 48 \beta \int_{T_B + 1}^t e^{-(t-s)} \|u(s, x+y) - u(t, x)\|^{2}_{L^3} \|u(s, x)\|_{L^6}^{2}\, ds \\
        \leq &\, 2\, e^{-(t-T_B -1)} \|v (T_B +1)\|^2  \\
        + &\, 48 \beta \int_{T_B + 1}^t e^{-(t-s)} \|u(s, x+y) - u(t, x)\|^{2}_{L^3} (\|u(s, x)\|_{L^6}^{6} + 1)\, ds
        	\end{split}
\end{equation} 
where in the last step of the above inequality, we used Young's inequality
$$
	\|u(s, \cdot )\|_{L^6}^{2} = \left(\int_\gw u^6 (s, x)\,dx \right)^{1/3} \leq \frac{1}{3} \|u(s, \cdot )\|^6_{L^6} + \frac{2}{3} \leq \|u(s, \cdot )\|_{L^6}^{6} + 1.
$$
By \eqref{u6bd}, \eqref{utx} and \eqref{kyi}, for any $\ve > 0$, there is some $\eta > 0$ such that, for any given bounded set $B \subset H$ and all $g_0 \in B$, and for $y \in \mathbb{R}^3$ with $| y | < \eta$, we have
\beq \bl{vtx}
	\begin{split}
	&\int_\gw |v(t, x+y) - v(t, x)|^2 dx \\
	\leq &\, 2\, e^{-(t-T_B -1)} \|v (T_B +1)\|^2 + 48\, \beta \int_{T_B + 1}^t e^{-(t-s)} \,\ve^2 \, \left(\int_\gw u(s, s)^6\,dx + 1\right)\, ds \\
	= &\, 2\, e^{-(t-T_B -1)} \|v (T_B +1)\|^2 + 48\, \beta \, \ve^2 \left(K^*+ 1\right), \quad t \geq T_B +1, \; g_0 \in B,
	\end{split}
\eeq
where the constant $K^*$ is given by the right-hand side of \eqref{u6bd},
$$
	K^* = \frac{1}{2b} \left(\widetilde{C}^4 (\interleave B \interleave + \max \{C_1, 1\} \interleave B \interleave^2 + (2C_2 + C_1^2)| \gw| + 1 + J|\gw|^{1/2})^4 + 2K\right). 
$$

	Moreover, there exists a time 
$$
	T^*(B) = T_B + 1 + \log_e \left(\frac{\ve^2}{4 K_1 } \right)
$$ 
such that 
\beq \bl{vTB}
	2\, e^{-(t-T_B -1)} \|v (T_B +1)\|^2 < \ve^2, \quad \text{for} \; t \geq T^*(B),
\eeq
where $K_1$ is given in \eqref{lsp2}. It follows from \eqref{vtx} and \eqref{vTB} that 
\beq \bl{vcp}
	\int_\gw |v(t, x+y) - v(t, x)|^2 dx < \left[ 1 + 48\, \beta (K^* + 1) \right] \ve^2, \quad t \geq T^*(B), \, g_0 \in B.
\eeq
Since $\ve > 0$ is arbitrary, by Lemma \ref{KRTh}, \eqref{vcp} demonstrates that 
\beq \bl{vcmp}
	\bigcup_{t\, \geq \, T^*(B)} \left(\bigcup_{g_0 \in B} v(t, \cdot)\right) \;\, \text{is precompact in}\; L^2 (\gw).
\eeq

Similarly, by Lemma \ref{KRTh} and \eqref{ucmp}, for any $\ve > 0$, there is some $\eta > 0$ such that, for any given bounded set $B \subset H$ and all $g_0 \in B$, and for $y \in \mathbb{R}^3$ with $| y | < \eta$, it holds that 
$$
	\int_\gw | u(t, x + y) - u(t, x)|^2 \, dx < \ve^2, \quad \text{for all} \;\, t \geq T_B + 1,
$$
and we can show that, for any $g_0 \in B$,
\begin{equation} \bl{wtx}
	\begin{split}
	&\int_\gw |w(t, x+y) - w(t, x)|^2 dx \leq 2\, e^{- r(t-T_B -1)} \|w (T_B +1)\|^2   \\
	+ &\, q \int_{T_B + 1}^t e^{- r(t-s)} \int_\gw |u(s, x+ y) - u(s, x)|^2 dx\, ds < \left(1 + \frac{q}{r}\right) \ve^2, \; \,\; t \geq \widetilde{T}(B),
	\end{split}
\end{equation}
where 
$$
	\widetilde{T}(B) = T_B + 1 + \frac{1}{r} \log_e \left(\frac{\ve^2}{4 K_1 } \right).
$$
\eqref{wtx} shows that 
\beq \bl{wcmp}
	\bigcup_{t\, \geq \, \widetilde{T}(B)} \left(\bigcup_{g_0 \in B} w(t, \cdot)\right) \;\, \text{is precompact in}\; L^2 (\gw).
\eeq
Finally, put together \eqref{ucmp}, \eqref{vcmp} and \eqref{wcmp}. Then we see
\beq \bl{gcmp}
	\bigcup_{t\, \geq \, \max \{T^*(B), \, \widetilde{T}(B)\}} \left(\bigcup_{g_0 \in B} g(t, \cdot)\right) \;\, \text{is precompact in}\; H.
\eeq
Therefore, the solution semiflow generated by the system \eqref{pHR} is asymptotically compact. By Proposition \ref{L:basic}, there exists a global attractor $\mathscr{A}_1$ in the space $H = L^2 (\gw, \mathbb{R}^3)$ for the partly diffusive Hindmarsh-Rose equations \eqref{pHR}. 
\end{proof}

\begin{theorem} \bl{qHRA}
	There exists a global attractor $\mathscr{A}_2$ in the space $H = L^2 (\gw, \mathbb{R}^3)$ for the semiflow generated by the solutions of the partly diffusive Hindmarsh-Rose equations \eqref{qHR}. 
\end{theorem}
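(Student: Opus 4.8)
The plan is to follow the proof of Theorem~\ref{pHRA} almost line by line, the only structural change being that in the system~\eqref{qHR} the components $u$ and $v$ both diffuse while only $w$ obeys an ordinary differential equation. By Corollary~\ref{cor1} the semiflow of \eqref{qHR} already has a bounded absorbing set in $H$, so by Proposition~\ref{L:basic} it remains to prove asymptotic compactness, which I would reduce to showing that for every bounded $B\subset H$ the orbit $\bigcup_{t\ge T(B)}\bigcup_{g_0\in B}g(t,\cdot)$ is precompact in $H=L^2(\gw,\mathbb{R}^3)$.

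For the diffusive pair $(u,v)$ I would run the argument of Theorem~\ref{Th3} restricted to its first two components. Since the $u$-- and $v$--equations of \eqref{qHR} are identical to those of the fully diffusive system, the estimates \eqref{nbu} and \eqref{nbv} remain valid; summing only these two (the $w$--estimate \eqref{nbw} is neither available nor needed), replacing $\|v(t)\|^2$ and $\|w(t)\|^2$ on the right-hand side by the absorbing bound $K_1$ from Corollary~\ref{cor1}, and using the Sobolev interpolation $\|u\|_{L^4}^4\le 2\eta^4\|u\|^4+2\eta^4\|\nb u\|^4$, one obtains a differential inequality $\frac{d}{dt}\sigma\le \rho\,\sigma+h$ with $\sigma(t)=\|\nb u(t)\|^2+\|\nb v(t)\|^2$ and $\rho\le c\,\sigma$, $h$ constant. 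The $L^2$--energy inequality \eqref{E2}, which for \eqref{qHR} still retains the dissipation in $\nb u$ and $\nb v$, integrated over $[t,t+1]$, bounds $\int_t^{t+1}\sigma(s)\,ds$ uniformly in $g_0\in B$ exactly as in \eqref{uu4}; the uniform Gronwall inequality then yields a constant $Q$ with $\|\nb u(t)\|^2+\|\nb v(t)\|^2\le Q$ for all $t\ge T_1(B)$ and all $g_0\in B$. Hence $\bigcup_{t\ge T_1(B)}\bigcup_{g_0\in B}(u(t,\cdot),v(t,\cdot))$ is bounded in $[H^1(\gw)]^2$, and by the compact Rellich embedding $H^1(\gw)\hookrightarrow L^q(\gw)$ for $q<6$ it is precompact in $[L^2(\gw)]^2$, with its $u$--part precompact also in $L^3(\gw)$.

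For the non-diffusive component $w$ I would copy Step~2 of the proof of Theorem~\ref{pHRA} and invoke the Kolmogorov--Riesz theorem, Lemma~\ref{KRTh}. From $w(t)=e^{-rt}w_0+q\int_0^t e^{-r(t-s)}(u(s)-c)\,ds$ one gets $\|w(t,\cdot+y)-w(t,\cdot)\|\le e^{-rt}\|w_0(\cdot+y)-w_0(\cdot)\|+q\int_0^t e^{-r(t-s)}\|u(s,\cdot+y)-u(s,\cdot)\|\,ds$; the first term is at most $2e^{-rt}\,\interleave B\interleave$, and for the integral one splits $\int_0^t=\int_0^{T_1(B)}+\int_{T_1(B)}^t$, estimating the first piece by $C_B\,e^{-r(t-T_1(B))}$ via the $L^2$--boundedness of $u(s)$ on $[0,T_1(B)]$ coming from \eqref{dse}, and the second piece by $\tfrac{q}{r}\,\ve$ via the $L^2$ translation-equicontinuity of the precompact $u$--orbit for $s\ge T_1(B)$. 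Since the $w$--orbit is $L^2$--bounded by Corollary~\ref{cor1}, Lemma~\ref{KRTh} shows that $\bigcup_{t\ge T^\ast(B)}\bigcup_{g_0\in B}w(t,\cdot)$ is precompact in $L^2(\gw)$. Combined with the precompactness of the $(u,v)$--orbit, the full orbit of \eqref{qHR} is then precompact in $H$, so the semiflow is asymptotically compact and Proposition~\ref{L:basic} delivers the global attractor $\mathscr{A}_2$ in $H$.

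I do not expect a genuinely new obstacle: since two of the three components now diffuse, this case is in fact softer than \eqref{pHR}, and in particular no analogue of the $L^6$ trajectory bound of Theorem~\ref{Th2p} is required because the $v$--component can be placed directly in $H^1$ and the $w$--equation depends on $u$ only linearly. The one place where a plain energy estimate does not suffice is still the $w$--component: the $w$--equation carries no Laplacian, so the free part $e^{-rt}w_0$ never acquires spatial regularity and the $w$--orbit cannot be bounded in any Sobolev space; the correct tool is the Kolmogorov--Riesz criterion, which extracts $L^2$--precompactness solely from the uniform spatial translation-equicontinuity transferred from the (now $H^1$--bounded, hence precompact) $u$--component through the Duhamel integral. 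The bookkeeping point to watch is that the thresholds $T_1(B)$, $T^\ast(B)$ and the translation radius $\eta$ must depend only on $B$ and not on the individual trajectory, which is guaranteed by using the uniform-in-$B$ absorbing estimates of Theorem~\ref{Th2} and Corollary~\ref{cor1}.
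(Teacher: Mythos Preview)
Your proposal is correct and follows a genuinely different route from the paper. For the diffusive pair $(u,v)$, the paper does not revisit the $H^1$ estimates of Theorem~\ref{Th3}; instead it extends the Step~1 strategy of Theorem~\ref{pHRA}, combining the $L^2\to L^4$ smoothing of the heat semigroup with an integrated energy bound on $\|u\|_{H^1}^2$ and the $L^{2p}$ machinery of Theorem~\ref{Th2p} to obtain a uniform $L^4$ bound on $v(t)$ (and on $u(t)$ by reference to Theorem~\ref{pHRA}), then appeals to the embedding $L^4(\gw)\hookrightarrow L^2(\gw)$ for precompactness. Your approach is more elementary: since both $u$ and $v$ carry Laplacians in \eqref{qHR}, the uniform Gronwall scheme of Theorem~\ref{Th3}, restricted to \eqref{nbu} and \eqref{nbv}, already produces an $H^1$ absorbing ball for $(u,v)$, and the Rellich embedding $H^1(\gw)\hookrightarrow L^2(\gw)$ delivers precompactness directly---bypassing Theorem~\ref{Th2p} and the semigroup regularity altogether. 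Your route has the further advantage that the Rellich embedding is genuinely compact, whereas $L^4(\gw)\hookrightarrow L^2(\gw)$ is not, so the paper's argument at that step is in fact incomplete as written. For the non-diffusive $w$--component the two arguments essentially coincide (Kolmogorov--Riesz via the Duhamel formula); the only cosmetic difference is that you split the time integral at $T_1(B)$ and absorb the early portion by exponential decay, while the paper restarts the variation-of-constants formula at $T_B+1$.
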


\begin{proof}
	The semiflow generated by the solutions of the system \eqref{qHR} is also dissipative since Corollary \ref{cor1} with $d_2 \|\nb v(t)\|^2$ added to the right-hand side of \eqref{pE2} shows that there exists an absorbing set in $H$ for this system. The proof of the asymptotic compactness of the $u$-component functions and the $w$-component functions for this system \eqref{qHR} is the same as in the proof of Theorem \ref{pHRA}. 
	
	Thus it suffices to show the asymptotic compactness of the $v$-component functions for this system. Since Theorem \ref{Th2p} and \eqref{vK2} already proved the $v$-absorbing property for each solution trajectory,
\beq \bl{vtp}
	\limsup_{t \to \infty} \|v(t)\|^4_{L^4} < K_2, \quad \text{for any given} \,\; g_0 \in H, 
\eeq	
we need only to show that for any given bounded set $B \subset H$ and all the initial states $g_0 \in B$, the bunch of $v$-component functions of all these solutions $g(t, g_0)$ admits a uniform bound in the space $L^4 (\gw)$ at the unified time point $t = 1$. This will be similar to \eqref{ut1} in the proof of Theorem \ref{pHRA}.

According to \eqref{pq}, here we have
$$
	\|e^{d_2 \gd t} v_0 \|_{L^4} \leq \widetilde{c} \, t^{-\frac{3}{8}} \|v_0\|_{L^2}, \quad t > 0,
$$		
where $\widetilde{c}$ is a constant, and
\beq \bl{L26}
	\|v(t)\|_{L^4} = \frac{\widetilde{c}}{t^{3/8}} \|v_0\| + \int_0^t  \| e^{d_2 \gd (T-s)} (\alpha - \beta u^2(s))\|_{L^4}\, ds, \quad  t > 0.
\eeq
Set $d_* = \min \{d_1, d_2, 1\}$. Adapt \eqref{pE2} to the following inequality
\beq \bl{E3p}
	\begin{split}
	&\frac{d}{dt} (C_1 \|u(t)\|^2 + \|v(t)\|^2 + \|w(t)\|^2) + d_* C_1 ( \|\nb u \|^2 + \|u \|^2)   \\[2pt]
	&+ d_* ( \|\nb v \|^2  + \|v \|^2) + r \|w \|^2 \leq (2C_2 + C_1^2)| \gw|, \quad t \geq 0.
	\end{split}
\eeq
Integrate \eqref{E3p} over the time interval $[0, t]$ to yield
\beq \bl{nvbd}
	 \int_0^t (d_* (C_1 \|u\|^2_{H^1(\gw)} + \|v \|^2_{H^1(\gw)} )+ r \|w \|^2)\, ds \leq \max \{C_1, 1\} \interleave B \interleave^2 + t\, (2C_2 + C_1^2)| \gw|
\eeq
By the fact that $L^4 (\gw), L^6 (\gw)$ are continuously embedded in $H^1(\gw)$, it follows from \eqref{L26} and \eqref{nvbd} that, for $t > 0$,
\beq \bl{MD}
	\begin{split}
	&\|v(t)\|_{L^4} \leq \frac{\widetilde{c}}{t^{3/8}}\, \|v_0\|  + \alpha t\, |\gw|^{1/2} + \beta \int_0^t \|e^{d_2 \gd (t-s)} u^2(s) \|_{L^4}\, ds \\
	\leq &\, \frac{\widetilde{c}}{t^{3/8}} \|v_0\|  + \alpha t\, |\gw|^{1/2} + \beta \int_0^t \|e^{d_2 \gd (t-s)}\|_{\mathcal{L}(L^2, L^4)}\, \| u^2(s) \|_{L^2}\, ds \\
	= &\, \frac{\widetilde{c}}{t^{3/8}} \|v_0\|  + \alpha t\, |\gw|^{1/2} + \beta \int_0^t \|e^{d_2 \gd (t-s)}\|_{\mathcal{L}(L^2, L^4)}\, \| u(s) \|^2_{L^4}\, ds \\
	\leq &\, \frac{\widetilde{c}}{t^{3/8}} \|v_0\|  + \alpha t\, |\gw|^{1/2} + \beta \kappa \int_0^t \|e^{d_2 \gd (t-s)}\|_{\mathcal{L}(H^1)}\, \| u(s) \|^2_{H^1}\, ds   \\
	\leq &\, \frac{\widetilde{c}}{t^{3/8}} \interleave B \interleave  + \alpha t\, |\gw|^{1/2} + \frac{\beta \kappa}{d_* C_1} \left(\max \{C_1, 1\} \interleave B \interleave^2 +\, t (2C_2 + C_1^2)| \gw| \right),
	\end{split} 
\eeq
where $\kappa > 0$ is the $H^1 \hookrightarrow L^4$ embedding constant and $e^{d_2 \gd t}$ is a contraction semigroup on $H^1 (\gw)$. Take $t = 1$ in \eqref{MD} and we reach a uniform bound
$$
	\sup_{g_0 \in B} \|v(1)\|_{L^4} \leq \left(\widetilde{c} +  \frac{\beta \kappa}{d_* C_1} \left(\max \{C_1, 1\}\right) \right) \interleave B \interleave + \alpha |\gw|^{1/2} + \frac{\beta \kappa}{d_* C_1} (2C_2 + C_1^2)| \gw|. 
$$ 
for any given bounded set $B \subset H$ and all $g_0 = (u_0, v_0, w_0) \in B$. Now the bunch of $v$-component functions at time $t =1$ of all these solutions are uniformly bounded in the space $L^4 (\gw)$. Then the argument in Theorem \ref{Th2p} shows that there exists an bounded absorbing set in $L^4 (\gw)$ for the $v$-component functions of all these solutions initiated from the given bounded set $B$ in $H$, which in turn shows that the set of all these $v$-component functions are asymptotically compact in the space $L^2 (\gw)$ by the compact embedding. Thus the proof is completed.	
\end{proof}

\begin{remark}
	Remark 1 at the end of Section 4 said that the global attractor $\mathscr{A}$ for the Hindmarsh-Rose semiflow associated with the diffusive system \eqref{pb} has a finite fractal dimension in $H$ via proof of the existence of an exponential attractor. It is a conjecture that the global attractors $\ms{A}_1$ and $\ms{A}_2$ for the partly diffusive Hindmarsh-Rose equations also have finite fractal dimensions in the space $H$, but that is not pursued in this paper.
\end{remark}

\textbf{Acknowledgment.}  Jianzhong Su is partially supported by USDA Grant number 2018-38422-28564.

\vspace{10pt}
\bibliographystyle{amsplain}

\end{document}